\documentclass[12pt,a4wide]{article}
\usepackage{graphics}
\usepackage{graphicx}
\usepackage{caption}
\usepackage{floatrow}
\usepackage{float}
\usepackage{pifont}
\usepackage{subcaption}
\usepackage{epsfig}
\usepackage[mathscr]{euscript}
\usepackage{mathrsfs}
\usepackage{amsmath,amsthm,amsfonts}
\usepackage{amssymb}
\usepackage{enumerate}
\usepackage{hyperref}
\newtheorem{theorem}{Theorem}[section]

\newtheorem{proposition}[theorem]{Proposition}
\newtheorem{prop}[theorem]{Proposition}
\newtheorem{definition}[theorem]{Definition}

\newtheorem{lemma}[theorem]{Lemma}
\newtheorem{corollary}[theorem]{Corollary}
\newtheorem{remark}[theorem]{Remark}

{% This is the begin code
%\textbf{Proof.}
%}
%{% This is the end code
%\hfill$\square$\\}

%\newcommand{\G}{\Gamma}

%\newcommand{\cc}{\mbox{$\mathbf c$}}
%\newcommand{\rr}{\mbox{$\mathbf R$}}
%\newcommand{\zz}{\mbox{$\mathbf Z$}}

%\newcommand{\Z}{\mathbf{Z}}

%\newcommand{\R}{\mathbf{R}}

\newcommand{\Ho}{\mathfrak{H}}
\newcommand{\ho}{\mathfrak{h}}
\newcommand{\Go}{\mathfrak{G}}
\newcommand{\go}{\mathfrak{g}}

%\newcommand{\sat}[1]{$#1$-fat-saturated}

%\newcommand{\pf}{\noindent{\em Proof: }}
%\newcommand{\epf}{\hfill\hbox{\rule{3pt}{6pt}}\\}
%\newcommand{\forme}[1]{}
%\newcommand{\redlam}{\Lambda^{\mbox{red}}}

%
%\newcommand{\clr}[1]{{\color{red} #1}}
%\newcommand{\clb}[1]{{\color{blue} #1}}
%

%%capital : family of Hoffman graphs, small : a Hoffman graph%%
%%make labels for picture in the sum%%

\begin{document}
\title{A generalization of a theorem of Hoffman}

\author{Jack H. Koolen$^{\dag\S}$, Qianqian Yang$^\dag$, Jae Young Yang$^\ddag$\footnote{Corresponding author}~
\\ \\
\small $^\dag$School of Mathematical Sciences\\
\small University of Science and Technology of China \\
\small 96 Jinzhai Road, Hefei, 230026, Anhui, PR China\\
\small $^\S$Wen-Tsun Wu Key Laboratory of CAS\\
\small 96 Jinzhai Road, Hefei, 230026, Anhui, PR China\\
\small $^\ddag$School of Mathematical Sciences\\
\small Anhui University\\
\small 111 Jiulong Road, Hefei, 230039, Anhui, PR China\\
\small {\tt koolen@ustc.edu.cn, xuanxue@mail.ustc.edu.cn, rafle@postech.ac.kr}\vspace{-3pt}
}
\maketitle
\date{}
\vspace{-24pt}
\begin{abstract}
In 1977, Hoffman gave a characterization of graphs with smallest eigenvalue at least $-2$.  In this paper we generalize this result to graphs with smaller smallest eigenvalue. For the proof, we use a combinatorial object named Hoffman graph, introduced by Woo and Neumaier in 1995.
Our result says that for every $\lambda \leq -2$, if a graph with smallest eigenvalue at least $\lambda$ satisfies some local conditions, then it is highly structured. We apply our result to graphs which are cospectral with the Hamming graph $H(3,q)$, the Johnson graph $J(v, 3)$ and the $2$-clique extension of grids, respectively.
\end{abstract}

\textbf{Keywords} : smallest eigenvalue, Hoffman graph, Johnson graph, Hamming graph, $2$-clique extension of grid graph, intersection graph, hypergraph

\textbf{AMS classification} : 05C50, 05C75, 05C62

\section{Introduction}
In 1976, Cameron et al. \cite{Cameron} showed that a connected graph with smallest eigenvalue at least $-2$ is a generalized line graph if the number of vertices is at least 37. For the proof, the classification of the irreducible root lattices was essential. In 1977, Hoffman \cite{Hoff1977} showed the following result. Note that in this paper, we denote by $d(x)$ the degree of $x$, $G_x$ the induced subgraph on the neighbors of $x$, and $\bar{d}(G_x)$ the average degree of $G_x$.
\begin{theorem}\label{Hoffman}
Let $-1-\sqrt{2}<\lambda\le -2$ be a real number. Then, there exists an integer $f(\lambda)>0$, such that if $G$ is a graph satisfying
\begin{enumerate}
\item $d(x)\geq f(\lambda)$ holds for all $x \in V(G)$,
\item $\lambda\le\lambda_{\min}(G)\le-2$,
\end{enumerate}
then $G$ is a generalized line graph and $\lambda_{\min}(G)=-2$.
\end{theorem}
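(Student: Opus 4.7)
The plan is to split the proof into two steps: first show that $\lambda_{\min}(G)=-2$ holds exactly under the hypotheses, and then invoke the theorem of Cameron et al.\ \cite{Cameron}, which characterises connected graphs with smallest eigenvalue at least $-2$ and at least $37$ vertices as generalized line graphs. The degree hypothesis gives $|V(G)|\geq f(\lambda)+1$, so choosing $f(\lambda)\geq 36$ and working component by component handles the vertex count automatically. The substantive content of the theorem is therefore the equality $\lambda_{\min}(G)=-2$.

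To prove this equality, I would argue by contradiction: suppose $\lambda\leq\lambda_{\min}(G)<-2$. The plan is to construct an induced subgraph $H\subseteq G$ with $\lambda_{\min}(H)<-1-\sqrt{2}$; eigenvalue interlacing then yields $\lambda_{\min}(G)\leq\lambda_{\min}(H)<-1-\sqrt{2}<\lambda$, a contradiction. To build $H$, fix a vertex $x$ with $d(x)\geq f(\lambda)$ and analyse the local structure at $x$: the induced graph $G_x$ on $N(x)$ together with the edges joining $N(x)$ to $V(G)\setminus(\{x\}\cup N(x))$. Since $\lambda_{\min}(G)\geq -2$ whenever every neighbourhood $G_x$ is a disjoint union of cliques (i.e., $G$ is a line graph), the strict inequality $\lambda_{\min}(G)<-2$ forces some vertex---which by the degree hypothesis automatically has high degree---to exhibit non-clique behaviour in its neighbourhood. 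This is the starting point for building the obstruction.

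The value $-1-\sqrt{2}$ is not arbitrary: it is an accumulation point from below of smallest eigenvalues of a natural family of ``beyond-Dynkin-diagram'' obstruction graphs. A convenient setting for the quantitative analysis is the vector representation coming from the positive semidefinite matrix $A(G)-\lambda_{\min}(G)\,I$, in which each vertex $x$ becomes a vector $v_x$ of squared norm $-\lambda_{\min}(G)\in(2,1+\sqrt{2})$, with $\langle v_x,v_y\rangle\in\{0,1\}$ according to adjacency. The main obstacle is the combinatorial-geometric case analysis needed to extract one of these obstruction subgraphs from the vector representation once $d(x)$ is sufficiently large; one must show that the geometric constraints around $x$---many vectors at the same non-trivial angle to $v_x$, together with further constraints from the rest of $G$---force enough ``bad'' alignments among $\{v_y:y\in N(x)\}$ to realise the desired induced subgraph, with $f(\lambda)$ chosen to be the maximum of $36$ and the threshold extracted from this argument. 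Once $H$ is produced, interlacing yields the contradiction, and the proof is completed by applying \cite{Cameron} to obtain the generalized line graph conclusion.
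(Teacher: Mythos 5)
There is a genuine gap, and it sits exactly at the step you label as ``the main obstacle.'' First, a factual point: the paper does not prove Theorem \ref{Hoffman} at all --- it is quoted from Hoffman \cite{Hoff1977}, and the paper stresses that Hoffman's proof avoided the root-lattice classification (that is precisely why a large minimum degree must be assumed). Your outer reduction --- prove $\lambda_{\min}(G)=-2$, then apply Cameron et al.\ \cite{Cameron} componentwise, using $d(x)\geq f(\lambda)$ to guarantee each component has at least $37$ vertices --- is legitimate, but it is the easy part and it is a different route from Hoffman's; the entire substance of the theorem is the eigenvalue step, and your plan for it is both unexecuted and aimed at the wrong target.

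Concretely: you propose to derive, from $\lambda_{\min}(G)<-2$ and large minimum degree, an induced subgraph $H$ with $\lambda_{\min}(H)<-1-\sqrt{2}$. No such derivation can exist, because the relevant obstruction families accumulate at $-1-\sqrt{2}$ \emph{from above}, not from below. For instance, the graphs $\widetilde{K}_{2m}$ (a $K_{2m}$ plus a vertex adjacent to $m$ of its vertices, the very graphs the paper uses) have minimum degree $m\to\infty$, smallest eigenvalue strictly inside $(-1-\sqrt{2},-2)$, and hence, by interlacing, \emph{no} induced subgraph with smallest eigenvalue $\leq-1-\sqrt{2}$; so ``large minimum degree and $\lambda_{\min}<-2$'' simply does not force what you want. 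The correct (and achievable) target is an induced subgraph $H$ with $\lambda_{\min}(H)<\lambda$ for the \emph{given} $\lambda>-1-\sqrt{2}$, where the obstruction (e.g.\ a sufficiently large $\widetilde{K}_{2m}$, or a large clique-blow-up $G(\mathfrak{h},n)$ of a Hoffman graph $\mathfrak{h}$ with $\lambda_{\min}(\mathfrak{h})\leq-1-\sqrt{2}$ as in Theorem \ref{Ostrowski}) must be chosen depending on how close $\lambda$ is to $-1-\sqrt{2}$; this dependence is exactly why $f(\lambda)\to\infty$ as $\lambda\downarrow-1-\sqrt{2}$ and cannot be swept into a uniform ``$<-1-\sqrt{2}$'' statement. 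Moreover, the combinatorial extraction itself --- in the paper's language for $t=1$: forming the associated Hoffman graph via quasi-cliques, showing every vertex lies in at least two big quasi-cliques, excluding the special submatrices of $\mathcal{M}(1)$, and concluding an $\{\mathfrak{h}^{(2)}\}$-line structure --- is the whole proof, and your paragraph describing ``the case analysis needed'' is a statement of intent rather than an argument. A further small error in your setup: it is not true that $\lambda_{\min}(G)\geq-2$ whenever every neighbourhood is a disjoint union of cliques (take $K_{1,n}$ with $n\geq5$), so even your proposed starting point for locating ``non-clique behaviour'' needs to be replaced by the correct local characterization of (generalized) line graphs.
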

For the proof of the above theorem, Hoffman did not use the classification of the irreducible root lattices. This meant that he had to assume large minimum degree. In this paper, we use ideas from Hoffman \cite{Hoff1977} to show four structural results that generalize Theorem \ref{Hoffman}, where the first two results give sufficient conditions for a graph to be the intersection graph of some linear uniform hypergraph. The notions of uniform hypergraphs and $p$-plexes will be introduced in the subsequent section.

\begin{theorem}\label{intro1}
Let $t$ be a positive integer. Then, there exists a positive integer $K(t)$, such that if a graph $G$ satisfies the following conditions:
\begin{enumerate}
\item $d(x)>K(t)$ holds for all $x \in V(G)$,
\item any $(t^2 +1)$-plex containing $x$ has order at most $\frac{d(x)-K(t)}{t}$ for all $x \in V(G)$,
\item $\lambda_{\min} (G) \geq -t-1$,
\end{enumerate}
then $G$ is the intersection graph of some linear $(t+1)$-uniform hypergraph.
\end{theorem}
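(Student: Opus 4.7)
The plan is to follow Hoffman's overall strategy, but reformulated in the language of Hoffman graphs as developed by Woo and Neumaier: associate to $G$ a structure in which each vertex is decomposed into $t+1$ ``fat cliques,'' and then identify these fat cliques with the hyperedges of a linear $(t+1)$-uniform hypergraph whose intersection graph is $G$.

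First, for each vertex $x \in V(G)$ I would analyze the local neighborhood $G_x$. Using the eigenvalue hypothesis $\lambda_{\min}(G)\ge -t-1$ together with a Cauchy-interlacing / averaging argument in the spirit of Hoffman \cite{Hoff1977}, one produces a canonical decomposition of the closed neighborhood into a bounded number $m(x)$ of maximal ``quasi-cliques'' $C_1(x),\dots,C_{m(x)}(x)$. Each $C_i(x)$ will be a $(t^2+1)$-plex containing $x$, and the eigenvalue bound forces $m(x) \le t+1$ with an accompanying estimate that most of $N(x)$ is covered. A Gram-matrix argument on the corresponding Hoffman graph pins down the algebraic obstruction to having $m(x)>t+1$.

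Second, I would use the hypothesis on $(t^2+1)$-plexes to promote these quasi-cliques to genuine cliques. Since every $(t^2+1)$-plex through $x$ has order at most $\tfrac{d(x)-K(t)}{t}$, while the quasi-clique decomposition assigns the $d(x)$ edges at $x$ to at most $t+1$ pieces, averaging forces one of the $C_i(x)$ to have size very close to $d(x)/t$, and in particular to exceed the plex bound unless its non-edges are essentially absent. By choosing $K(t)$ large enough, this argument shows that each $C_i(x)$ must be a true clique of $G$ and that every edge at $x$ lies in exactly one such clique. In particular $m(x)=t+1$.

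Third, I would globalize: I claim that if $C$ is a clique arising as some $C_i(x)$ and $y\in C$, then $C$ also arises as a clique in the decomposition around $y$. This follows because otherwise two distinct large cliques through $y$ would overlap in more than one vertex, and their union would form a $(t^2+1)$-plex exceeding the allowed order. Consequently the set $\mathcal{C}$ of cliques produced in step two is canonically defined, partitions $E(G)$, and any two members of $\mathcal{C}$ intersect in at most one vertex. Defining the hypergraph with point set $\mathcal{C}$ and with each $v\in V(G)$ becoming the hyperedge $\{C\in\mathcal{C}:v\in C\}$ of size $t+1$ then exhibits $G$ as the intersection graph of a linear $(t+1)$-uniform hypergraph.

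The step I expect to be the main obstacle is the second one, where the quasi-cliques delivered by the Hoffman-graph / eigenvalue machinery must be upgraded to honest cliques. This is where the $(t^2+1)$-plex hypothesis and the quantitative degree condition really interact, and the constant $K(t)$ has to be tuned so that the plex bound is sharp enough to kill every remaining non-edge inside each $C_i(x)$ while still being compatible with the interlacing estimates on $m(x)$.
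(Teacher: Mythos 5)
Your overall architecture --- decompose each neighbourhood into quasi-cliques, show that every vertex lies in exactly $t+1$ of them with pairwise intersections of size at most one, and then read off $G$ as the intersection graph of the hypergraph whose points are these cliques and whose edges are the $(t+1)$-sets of cliques through the vertices --- is exactly the paper's route (Theorem \ref{main1} with $s=t$, followed by the hypergraph construction in the proof of Theorem \ref{intro1}). The genuine gap is in your Step 2, which you yourself flag as the crux. The averaging there is quantitatively wrong: if the at most $t+1$ quasi-cliques through $x$ cover (most of) the $d(x)$ neighbours of $x$, the largest one is only guaranteed to have order roughly $d(x)/(t+1)$, which stays strictly below the plex bound $\frac{d(x)-K(t)}{t}$ once $d(x)$ is large, so nothing is forced to ``exceed the plex bound.'' Moreover the dichotomy ``exceed the bound unless its non-edges are essentially absent'' is not coherent: a set with few or no non-edges is still a $(t^2+1)$-plex, so hypothesis (ii) bounds its order a fortiori; making a quasi-clique closer to a clique can never create a violation of the plex hypothesis. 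In the paper the plex hypothesis is used in the opposite direction, and only to get a lower bound on fatness: if $x$ lay in at most $t$ quasi-cliques, each being a $(t^2+1)$-plex of order at most $\frac{d(x)-K(t)}{t}$ (Theorem \ref{quasicliques}(i) plus hypothesis (ii)), they would cover at most $d(x)-K(t)$ neighbours; taking $K(t)=R(n(t),(t+1)^2+1)$ and using that $\lambda_{\min}(G)\geq -t-1$ excludes large claws, the at least $K(t)$ uncovered neighbours contain a clique of order $n(t)$ and hence a further quasi-clique through $x$, a contradiction. This shows the associated Hoffman graph is $(t+1)$-fat; it does not, and cannot, turn the quasi-cliques into cliques.

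Your Step 3 has the same defect: the union of two large cliques meeting in at least two vertices need not be a $(t^2+1)$-plex at all (a vertex of one clique may have arbitrarily many non-neighbours in the other), so hypothesis (ii) yields no contradiction there either. Both the bound on pairwise overlaps and the upgrade ``each vertex lies in exactly $t+1$ quasi-cliques, adjacency is sharing exactly one of them, and consequently each quasi-clique is a clique'' come from the eigenvalue hypothesis via Hoffman-graph interlacing, not from counting: a vertex in $t+2$ quasi-cliques would give an induced $\ho^{(t+2)}$ (special matrix $(-t-2)$), two quasi-cliques sharing $t+1$ vertices would give two fat vertices with $t+1$ common slim neighbours, and both configurations have smallest eigenvalue $<-t-1$ after the clique blow-up of Theorem \ref{Ostrowski} combined with Proposition \ref{assohoff} (this is the content of Theorems \ref{quasicliques} and \ref{slimgraph}); finally the classification of indecomposable pieces with no forbidden principal submatrix (Theorem \ref{class}) forces every block of the special matrix of the $(t+1)$-fat associated Hoffman graph to be $(-t-1)$, which is precisely the statement that adjacency equals sharing exactly one quasi-clique, and the cliqueness of the quasi-cliques is a corollary of this, not an input. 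Some replacement for this structure theory (or for Hoffman's original argument) is needed; as written, Steps 2 and 3 do not go through.
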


\begin{theorem}\label{intro2}
Let $t$ be a positive integer. Then, there exists a positive integer $\kappa(t)$, such that if a graph $G$ satisfies the following conditions:
\begin{enumerate}
\item $d(x)>\kappa(t)$ holds for all $x \in V(G)$,
\item $\bar{d}(G_x) \leq \frac{d(x)-\kappa(t)}{t}$ holds for all $x \in V(G)$,
\item $\lambda_{\min} (G) \geq -t-1$,
\end{enumerate}
then $G$ is the intersection graph of some linear $(t+1)$-uniform hypergraph.
\end{theorem}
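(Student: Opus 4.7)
The plan is to parallel the proof of Theorem~\ref{intro1}, but using condition (ii) of Theorem~\ref{intro2} in place of the plex hypothesis at the step where one identifies the local Hoffman structure. The overall flow is: (a) use the eigenvalue bound $\lambda_{\min}(G)\geq -t-1$ and the Hoffman-graph machinery of the paper to represent $G$ as a sum of fat Hoffman graphs; (b) use the average-degree bound to identify each such Hoffman graph as a fat $K_{t+1}$; (c) convert this representation into a linear $(t+1)$-uniform hypergraph.

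For stage (a), the hypotheses $\lambda_{\min}(G)\geq -t-1$ together with $d(x)>\kappa(t)$ for $\kappa(t)$ sufficiently large in $t$ should guarantee a representation of $G$ by fat Hoffman graphs $\mathfrak{h}_1,\ldots,\mathfrak{h}_n$, each of smallest eigenvalue at least $-t-1$. A classification of the admissible $\mathfrak{h}_i$ should yield a finite list: the fat $K_{t+1}$ as the ``generic'' member, together with a bounded number of other fat Hoffman graphs, each of which either has fewer than $t+1$ slim clique components or carries extra slim edges beyond the $K_{t+1}$ pattern.

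Stage (b) is where condition (ii) enters. For each vertex $x$, the neighborhood graph $G_x$ decomposes (up to controlled incidences between neighbouring Hoffman graphs) into cliques corresponding to the slim vertices of $\mathfrak{h}_x$. If $\mathfrak{h}_x$ is a fat $K_{t+1}$ with roughly balanced clique sizes, one computes $\bar{d}(G_x)\approx d(x)/(t+1)-1$, which sits strictly below the threshold $(d(x)-\kappa(t))/t$ once $d(x)$ exceeds a bound depending on $t$. Any other admissible $\mathfrak{h}_x$ gives a strictly denser $G_x$: if $\mathfrak{h}_x$ has $k\leq t$ slim clique components, then $\bar{d}(G_x)\geq d(x)/k-1\geq d(x)/t-1$, and any extra slim-slim edges add a positive linear contribution in $d(x)$. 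Choosing $\kappa(t)$ larger than the uniform gap separating these cases forces $\mathfrak{h}_x$ to be a fat $K_{t+1}$ at every $x$.

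Stage (c) is then essentially formal: with each vertex of $G$ represented by a fat $K_{t+1}$, one constructs the hypergraph $H$ by taking the connected components of the slim part of the representation as vertices and the $\mathfrak{h}_i$ as edges; $(t+1)$-uniformity follows because each $\mathfrak{h}_i$ is a fat $K_{t+1}$, and linearity follows because two fat $K_{t+1}$'s share at most one slim vertex. The main obstacle is stage (b): one must bound $\bar{d}(G_x)$ sharply enough in terms of the Hoffman structure at $x$ so that the threshold $d(x)/t$ discriminates cleanly between the fat $K_{t+1}$ case and the sporadic cases. Worth noting is that a direct reduction to Theorem~\ref{intro1} does not seem available: the naive derivation of a $(t^2+1)$-plex bound from the average-degree hypothesis gives only $|P|\lesssim d(x)/\sqrt{t}$, losing a factor of $\sqrt{t}$ and falling short of the threshold $(d(x)-K(t))/t$ that Theorem~\ref{intro1} requires.
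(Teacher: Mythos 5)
Your overall strategy is the paper's (represent $G$ via Hoffman graphs, use hypothesis (ii) in a density count to force $t+1$ cliques at every vertex, then read off a linear $(t+1)$-uniform hypergraph), and your closing remark is correct: there is no cheap reduction to Theorem~\ref{intro1}, which is why the paper proves a separate Theorem~\ref{main2}. But as written there is a genuine gap in stage (a), and it propagates. The finite classification you posit there does not exist at that point of the argument: from the minimum-degree and eigenvalue hypotheses alone one can only make the associated Hoffman graph $\go(G,m(t),n(t))$ $1$-fat, and indecomposable $1$-fat Hoffman graphs with smallest eigenvalue at least $-t-1$ form an infinite family already for $t=2$ (for instance, a slim path $P_n$ with a private fat vertex attached to each slim vertex has special matrix $A(P_n)-I$, which is irreducible with smallest eigenvalue greater than $-3$, for every $n$). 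The finite list $\Go(t)$ of Theorem~\ref{class} — whose only $(t+1)$-fat member is $\ho^{(t+1)}$ — is available only \emph{after} one knows that every slim vertex has at least $t+1$ fat neighbours, and producing that fatness is exactly the job of hypothesis (ii). So your order (classify the pieces first, then use (ii) to discriminate among them) is circular. The paper runs your stage (b) first and at the level of quasi-cliques: assuming a vertex $x$ lies in $c(x)\le t$ quasi-cliques, it derives $\bar{d}(G_x)>\frac{d(x)-\kappa(t)}{t}$, a contradiction; hence $\go(G,m(t),n(t))$ is $(t+1)$-fat, and only then do Theorems~\ref{slimgraph} and \ref{line} force a decomposition into copies of $\ho^{(t+1)}$.

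The second gap is inside stage (b) itself. The threshold $\frac{d(x)-\kappa(t)}{t}$ differs from $\frac{d(x)}{t}$ only by the additive constant $\frac{\kappa(t)}{t}$, so every error in your bound $\bar{d}(G_x)\ge d(x)/k-1$ must be bounded by a constant depending only on $t$, uniformly in $d(x)$. That requires two facts you do not supply: first, the part of $G_x$ not covered by the quasi-cliques through $x$ has size $d'<R(n(t),(t+1)^2+1)$ — this is a Ramsey argument using that $G_x$ has no independent set of size $(t+1)^2+1$ (an induced star would violate $\lambda_{\min}(G)\ge -t-1$) and that a large clique in the uncovered part would create a new quasi-clique containing $x$; second, the quasi-cliques are $(t^2+1)$-plexes with pairwise intersections of size at most $t$ (Theorem~\ref{quasicliques}). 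With these, Cauchy--Schwarz gives $2|E(G_x)|\ge \frac{(d(x)-d')^2}{t}-(t^2+1)(d(x)-d')$, and $\kappa(t)=2R(n(t),(t+1)^2+1)+(t^2+1)t$ yields the contradiction; ``controlled incidences'' has to mean precisely this, and without the Ramsey bound your ``uniform gap'' is unjustified. Finally, stage (c) should be set up as in the proof of Theorem~\ref{intro1}: the hypergraph's vertices are the fat vertices (quasi-cliques) and its edges are the fat neighbourhoods $N^{\rm fat}_{\ho}(x)$ of the slim vertices; uniformity is the $(t+1)$-fatness, and linearity and the intersection-graph property follow from Lemma~\ref{combi}(iv), not from ``connected components of the slim part''.
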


Note that for $t=1$, Theorem \ref{intro1} and Theorem \ref{intro2} are weaker than Theorem \ref{Hoffman}. For $t=2$, we have the following two results on graphs with smallest eigenvalue at least $-3$.  In order to state these results, we introduce three Hoffman graphs $\mathfrak{f}_1$, $\mathfrak{f}_2$ and $\mathfrak{f}_3$ as follows:
\begin{center}
\begin{figure}[H]
\includegraphics[scale=1.0]{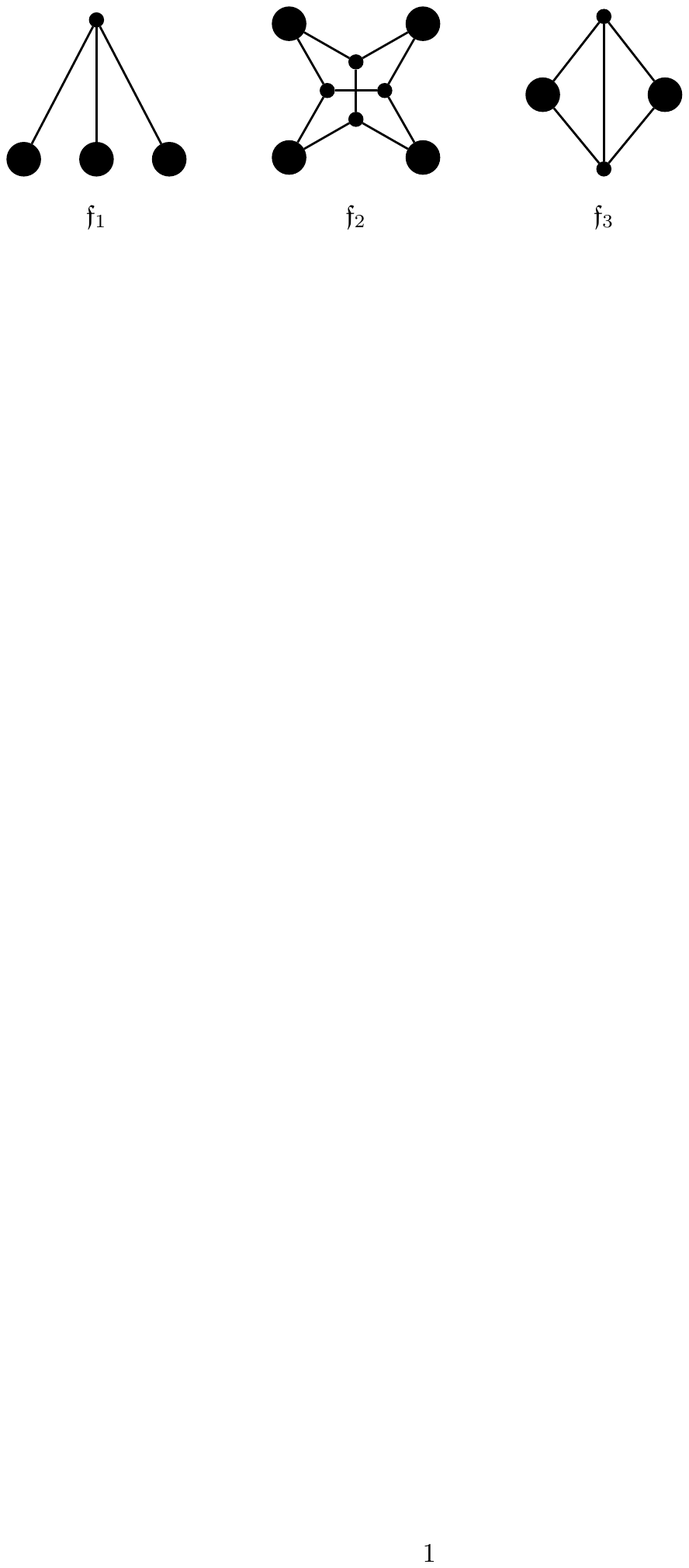}
{\caption{}\label{figure}}
\end{figure}
\end{center}
\vspace{-1cm}
(For the notions of Hoffman graphs and line Hoffman graphs, we refer to the next section.)

\begin{theorem}\label{intro3}
There exists a positive integer $K$, such that if a graph $G$ satisfies the following conditions:
\begin{enumerate}
\item $d(x)>K$ holds for all $x \in V(G)$,
\item any $5$-plex containing $x$ has order at most $d(x)-K$ for all $x \in V(G)$,
\item $\lambda_{\min} (G) \geq -3$,
\end{enumerate}
then $G$ is the slim graph of a $2$-fat $\{\mathfrak{f}_1,\mathfrak{f}_2,\mathfrak{f}_3\}$-line Hoffman graph.
\end{theorem}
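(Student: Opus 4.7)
The plan is to construct a Hoffman graph $\mathfrak{H}$ whose slim graph is $G$, together with a decomposition of $\mathfrak{H}$ into copies of $\mathfrak{f}_1, \mathfrak{f}_2, \mathfrak{f}_3$ glued along their slim vertices. Since the cap on $5$-plex orders here is $d(x)-K$, which is strictly weaker than the bound $(d(x)-K(t))/t$ of Theorem \ref{intro1} with $t=2$, we cannot hope to conclude that $G$ is the intersection graph of a linear $3$-uniform hypergraph. The extra slack in the plex bound is exactly what forces $\mathfrak{f}_2$ and $\mathfrak{f}_3$ to appear in addition to the ``pure hyperedge'' atom $\mathfrak{f}_1$.

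First, for each vertex $x$ with $d(x)>K$, I would adapt the Hoffman-type averaging arguments of \cite{Hoff1977}, using the value $-3$ in place of $-2$, to show that the induced subgraph $G_x$ decomposes as a bounded-multiplicity union of large quasi-cliques plus a remainder of size bounded by a function of $K$. The hypothesis $\lambda_{\min}(G)\geq -3$ provides the required eigenvalue interlacing on $G_x$, and condition (2) rules out the pathological case in which a single quasi-clique absorbs almost all of $N(x)$: such a piece would form a large $5$-plex violating the hypothesis. Thus every slim vertex of $G$ inherits a local decomposition into at least two quasi-cliques.

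Next, following the attachment scheme of Woo--Neumaier and its later refinements, I would globalize these local decompositions by identifying each quasi-clique across the vertices of $G$ that it meets and attaching a common fat vertex to its slim members. A standard Gram-matrix argument then shows $\lambda_{\min}(\mathfrak{H})\geq -3$, while the two-quasi-clique conclusion above gives the $2$-fat condition. The indecomposable atoms of $\mathfrak{H}$ under gluing along slim vertices must then be identified with $\mathfrak{f}_1, \mathfrak{f}_2$, or $\mathfrak{f}_3$.

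The main obstacle is precisely this last atom-classification step: proving that every $2$-fat indecomposable line Hoffman subgraph of $\mathfrak{H}$ with $\lambda_{\min}\geq -3$ occurring under hypotheses (1)--(3) is isomorphic to one of $\mathfrak{f}_1,\mathfrak{f}_2,\mathfrak{f}_3$. The strategy is to enumerate the finitely many candidate indecomposable $2$-fat Hoffman graphs with smallest eigenvalue at least $-3$ and slim part of bounded size, then eliminate every candidate outside the list using either the eigenvalue bound, the degree hypothesis, or the $5$-plex constraint (which is calibrated so that $\mathfrak{f}_1, \mathfrak{f}_2, \mathfrak{f}_3$ barely survive while the next candidate does not). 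Once this local classification is in hand, stitching the atoms into a global $\{\mathfrak{f}_1,\mathfrak{f}_2,\mathfrak{f}_3\}$-line Hoffman graph whose slim graph is exactly $G$ should follow from standard Hoffman-graph amalgamation.
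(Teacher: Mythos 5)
Your overall architecture (decompose each neighbourhood into large quasi-cliques, attach a fat vertex to each quasi-clique, then classify the indecomposable pieces) is indeed the paper's route, but two of your steps have genuine gaps. First, there is no ``standard Gram-matrix argument'' giving $\lambda_{\min}(\mathfrak{H})\geq -3$: that shortcut exists at level $-2$ (where $A+2I\succeq 0$ yields a lattice representation), but at $-3$ the positive semidefiniteness of $A(G)+3I$ gives no direct control over the special matrix $A_{\rm slim}-CC^{T}$ once fat vertices are attached, and attaching fat vertices can in general push the smallest eigenvalue of a Hoffman graph below that of its slim graph. The paper transfers the hypothesis $\lambda_{\min}(G)\geq-3$ to the associated Hoffman graph quantitatively, via the Hoffman--Ostrowski limit theorem together with a Ramsey-type result (Proposition 4.1 of Kim--Koolen--Yang): the parameters are chosen so large ($n(t)$, and $K=R(n(t),(t+1)^2+1)$) that if the associated Hoffman graph contained an induced Hoffman subgraph whose special matrix lies in the finite forbidden set $\mathcal{M}(2)$ (all with smallest eigenvalue at most $-2-\sqrt{2}$), then $G$ itself would contain an induced subgraph with smallest eigenvalue below $-3$. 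This calibration, not a Gram computation, is what makes the Hoffman graph usable; relatedly, your $2$-fatness argument also silently needs the eigenvalue condition to bound claws so that Ramsey's theorem produces a new clique of size $n(t)$ among the at least $K$ neighbours of $x$ not covered by the known quasi-cliques.

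Second, your atom-classification strategy cannot work as stated: the indecomposable $2$-fat Hoffman graphs with smallest eigenvalue at least $-3$ do \emph{not} have slim part of bounded size. For every $n$ the Hoffman graph with special matrix $J_n-3I_n$ (a slim $n$-clique, each slim vertex with two private fat neighbours) is indecomposable, $2$-fat, has smallest eigenvalue $-3$, and is not one of $\mathfrak{f}_1,\mathfrak{f}_2,\mathfrak{f}_3$; so ``enumerate the finitely many candidates of bounded slim size and eliminate the rest using the degree and $5$-plex hypotheses'' fails, and in fact hypotheses (i)--(ii) play no role in the classification at all (they are used only to force $2$-fatness of the associated Hoffman graph). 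The paper instead classifies the possible special matrices of the indecomposable summands as $(-2)$, $(-3)$, $J-3I$, or the two-block matrix with $r_1,r_2\leq 2$ (Theorem \ref{class}, using the forbidden matrices $\mathcal{M}(2)$ and an equivalence relation on slim vertices, with the bound $r_1,r_2\leq t$ coming from counting common fat neighbours across the two classes), and then disposes of the unbounded family $J_n-3I_n$ by \emph{adding} a new fat vertex adjacent to all of its slim vertices, after which it decomposes into $n$ copies of $\mathfrak{h}^{(3)}=\mathfrak{f}_1$ (Lemma \ref{t-cherry}). This added-fat-vertex step is precisely why the conclusion is phrased in terms of line Hoffman graphs rather than a direct decomposition of $\mathfrak{H}$ into atoms isomorphic to $\mathfrak{f}_1,\mathfrak{f}_2,\mathfrak{f}_3$; without it, and without the bound $r_1,r_2\leq 2$ identifying the remaining summands as $\mathfrak{f}_2$, $\mathfrak{f}_3$ or induced subgraphs of $\mathfrak{f}_2$, your final stitching step does not go through.
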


\begin{theorem}\label{intro4}
There exists a positive integer $\kappa$, such that if a graph $G$ satisfies the following conditions:
\begin{enumerate}
\item $d(x)>\kappa$ holds for all $x \in V(G)$,
\item $\bar{d}(G_x)\leq d(x)-\kappa$ holds for all $x \in V(G)$,
\item $\lambda_{\min} (G) \geq -3$,
\end{enumerate}
then $G$ is the slim graph of a $2$-fat $\{\mathfrak{f}_1,\mathfrak{f}_2,\mathfrak{f}_3\}$-line Hoffman graph.
\end{theorem}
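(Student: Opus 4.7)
The plan is to deduce Theorem~\ref{intro4} directly from Theorem~\ref{intro3} by observing that the average-degree condition on neighborhoods is strictly stronger than the $5$-plex condition appearing in Theorem~\ref{intro3}, provided $\kappa$ is chosen sufficiently large relative to the constant $K$ produced by that theorem. No new spectral input is needed: the whole argument is a local combinatorial translation of hypothesis~(2), followed by a single application of Theorem~\ref{intro3}.

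The key combinatorial lemma I would prove is the following. Suppose $S$ is a $5$-plex of order $s$ in $G$ containing a vertex $x$, and set $T:=S\cap N(x)\subseteq V(G_x)$. Then $|T|\geq s-5$ and
\[
d(x)\cdot\bar{d}(G_x) \;\geq\; |T|(|T|-5).
\]
Indeed, as $S$ is a $5$-plex, $x$ has at most four non-neighbors in $S\setminus\{x\}$, giving $|T|\geq s-5$. Moreover, each $v\in T$ has at most four non-neighbors in $S\setminus\{v\}$, hence at most four non-neighbors in $T\setminus\{v\}$, so $v$ has at least $|T|-5$ neighbors inside $T$ within $G_x$. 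Summing in-$T$ degrees and discarding the non-negative contributions from $V(G_x)\setminus T$ yields the stated bound.

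Combining the lemma with the hypothesis $\bar{d}(G_x)\leq d(x)-\kappa$ and writing $|T|=d(x)-a$, the resulting inequality $(d(x)-a)(d(x)-a-5)\leq d(x)(d(x)-\kappa)$ rearranges to
\[
\kappa \;\leq\; 2a+5-\frac{a^{2}+5a}{d(x)}.
\]
For $d(x)$ large in terms of $\kappa$, the correction term is negligible and I obtain $a\geq (\kappa-5)/2-o(1)$, hence $s\leq|T|+5\leq d(x)-(\kappa-15)/2+o(1)$. Choosing $\kappa:=2K+C$ for a suitably large absolute constant $C$ (and $\kappa\geq K+1$) thus forces every $5$-plex through every vertex to have order at most $d(x)-K$, while hypothesis~(1) of Theorem~\ref{intro4} delivers $d(x)>K$. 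Hypotheses (1)--(3) of Theorem~\ref{intro3} are therefore in force, and the structural conclusion follows immediately.

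The only real obstacle is the uniform choice of $\kappa$: the correction term $(a^{2}+5a)/d(x)$ must be absorbed across all feasible values of $a$ simultaneously, so $\kappa$ has to be taken comfortably larger than $2K$, and condition~(1) must demand $d(x)$ exceed a quantity that grows with $\kappa$. This bookkeeping aside, the reduction is routine, and all of the substantive Hoffman-graph analysis underlying the $2$-fat $\{\mathfrak{f}_1,\mathfrak{f}_2,\mathfrak{f}_3\}$-line structure is already packaged inside Theorem~\ref{intro3}.
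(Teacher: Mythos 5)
Your reduction is correct, and it is genuinely different from the paper's route. The paper proves Theorem~\ref{intro4} by invoking Theorem~\ref{main2} with $t=2$, $s=t-1=1$: there the average-degree hypothesis is fed directly into the quasi-clique machinery (Theorems~\ref{quasicliques} and~\ref{slimgraph}), with a Cauchy--Schwarz-type edge count inside $G_x$ showing that the associated Hoffman graph $\go(G,m(2),n(2))$ is $2$-fat. You instead bypass \ref{main2} entirely: your lemma $d(x)\,\bar d(G_x)\ge |T|(|T|-5)$ with $T=S\cap N(x)$ converts the hypothesis $\bar d(G_x)\le d(x)-\kappa$ into the $5$-plex bound of Theorem~\ref{intro3}, and writing $|T|=d(x)-a$ the inequality $(d(x)-a)(d(x)-a-5)\le d(x)(d(x)-\kappa)$ indeed yields $\kappa\le 2a+5-\frac{a^2+5a}{d(x)}$; note that since the correction term is nonnegative you get $a\ge(\kappa-5)/2$ \emph{exactly}, so the caveat in your last paragraph is unnecessary --- no largeness of $d(x)$ relative to $\kappa$ beyond hypothesis (1) is needed, and $\kappa:=2K+15$ already forces every $5$-plex through $x$ to have order at most $d(x)-K$ while $d(x)>\kappa>K$. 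What each approach buys: yours is shorter and keeps all Hoffman-graph analysis black-boxed inside Theorem~\ref{intro3}; the paper's Theorem~\ref{main2} is needed anyway because the analogous reduction of Theorem~\ref{intro2} to Theorem~\ref{intro1} fails for general $t$ (the plex bound one extracts is of order $d(x)/\sqrt{t}$, not $d(x)/t$), so the paper's uniform treatment covers both average-degree theorems at once, whereas your shortcut works precisely because in Theorem~\ref{intro4} the bound $d(x)-\kappa$ is not divided by $t$.
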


As applications of Theorems \ref{intro1}--\ref{intro4}, we give the following three results.

\begin{theorem} \label{Hamming} There exists a positive integer $q^\prime$ such that for each integer $q \geq q^\prime$, any graph, that is cospectral with the Hamming graph $H(3,q)$ is the intersection graph of some linear $3$-uniform hypergraph.
\end{theorem}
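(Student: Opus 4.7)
The plan is to apply Theorem \ref{intro2} with $t = 2$. Cospectrality with $H(3,q)$ implies that $G$ is $k$-regular with $k = 3(q-1)$, has $n = q^3$ vertices, and $\lambda_{\min}(G) = -3$ (regularity follows from $G$ being cospectral with a regular graph via $\lambda_1 = \bar{d}$); conditions (1) and (3) of Theorem \ref{intro2} therefore hold once $q$ is large enough that $3(q-1) > \kappa(2)$. The real task is to establish condition (2): $\bar{d}(G_x) \leq \frac{3(q-1) - \kappa(2)}{2}$ for every vertex $x$.

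The starting point is the triangle count. Because $\tr(A^3)$ is invariant under cospectrality and $H(3,q)$ has $q^3(q-1)(q-2)/2$ triangles, one has $\sum_x |E(G_x)| = 3q^3(q-1)(q-2)/2$, so the average of $\bar{d}(G_x)$ over $x$ equals $q-2$, which sits below the required threshold $\tfrac{3}{2}q - O(1)$ by a gap of order $q$. To upgrade this averaged estimate to a pointwise one, I would use two further pieces of spectral information. First, $\tr(A^4)$ is invariant, which fixes $\sum_{x,y} |N(x) \cap N(y)|^2$ and, together with the regularity identity $\sum_y |N(x) \cap N(y)| = k^2$, controls the variance of the values $|E(G_x)|$. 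Second, since $\lambda_{\min}(G) = -3$ has multiplicity $(q-1)^3$, the matrix $A + 3I$ is positive semidefinite of rank $3q^2 - 3q + 1$, giving a Gram representation of $V(G)$ as length-$\sqrt{3}$ vectors in $\mathbb{R}^{3q^2 - 3q + 1}$ with pairwise inner products in $\{0,1\}$. The rigidity of this geometric picture, combined with the variance bound, should force each individual $|E(G_x)|$ to be close to the average $3\binom{q-1}{2}$ and yield condition (2) for $q$ large.

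The main obstacle is precisely this last step: the moment identities alone supply only averaged information, and passing from there to a vertex-by-vertex bound on $\bar{d}(G_x)$ requires combining the variance control extracted from $\tr(A^4)$ with the Gram geometry coming from the $-3$-eigenspace. Once condition (2) is verified, Theorem \ref{intro2} directly yields that $G$ is the intersection graph of some linear $3$-uniform hypergraph, completing the proof.
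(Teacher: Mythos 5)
There is a genuine gap, and it sits exactly where you acknowledge it: the passage from the averaged triangle count to the pointwise bound $\bar{d}(G_x)\leq\frac{3(q-1)-\kappa(2)}{2}$ is not carried out. Your proposed repair does not work as stated: $\tr(A^4)$ fixes $\sum_{x,y}(A^2)_{xy}^2$, but the variance of the per-vertex triangle counts $A^3_{xx}$ would require control of $\sum_x (A^3_{xx})^2$, which is not a spectral invariant, so no variance bound on $|E(G_x)|$ follows from cospectrality alone in the way you suggest; and the Gram representation coming from $A+3I\succeq 0$ is only mentioned, not used. As written, condition (2) of Theorem \ref{intro2} is not verified.

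The missing idea is elementary and is the whole content of the paper's proof: a graph cospectral with $H(3,q)$ is regular of valency $k=3q-3$ (regularity is spectrally determined), connected (the multiplicity of $k$ is $1$), and has exactly four distinct eigenvalues, hence it satisfies $(A-\lambda_1I)(A-\lambda_2I)(A-\lambda_3I)=\frac{1}{n}(k-\lambda_1)(k-\lambda_2)(k-\lambda_3)J$ and is therefore walk-regular; in particular $A^3$ has \emph{constant} diagonal, so the number of triangles through a vertex does not depend on the vertex. By formula (\ref{averagevalency}) this gives $\bar{d}(G_x)=q-2$ exactly, for every vertex $x$, not merely on average. Then $q-2\leq\frac{3(q-1)-\kappa(2)}{2}$ holds as soon as $q\geq\kappa(2)-1$, and Theorem \ref{intro2} with $t=2$ finishes the proof, exactly as you intended in your framing step. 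So your overall strategy (reduce to Theorem \ref{intro2}) matches the paper, but the crucial local estimate needs the walk-regularity argument rather than the moment/Gram considerations you propose.
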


\begin{theorem} \label{Johnson} There exists a positive integer $v^\prime$ such that for each integer $v \geq v^\prime$, any graph, that is cospectral with the Johnson graph $J(v,3)$ is the intersection graph of some linear $3$-uniform hypergraph.
\end{theorem}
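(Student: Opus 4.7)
The strategy is to apply Theorem~\ref{intro2} with $t=2$ to a graph $G$ cospectral with $J(v,3)$. Recall that $J(v,3)$ is distance-regular with valency $3(v-3)$ and spectrum $\{3(v-3),\,2v-9,\,v-7,\,-3\}$, so any such $G$ automatically satisfies $\lambda_{\min}(G) = -3$, verifying hypothesis~(iii).

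First, I would show that $G$ is $3(v-3)$-regular. Since $G$ is cospectral with a regular graph, its average degree $2|E(G)|/|V(G)|$ equals $\lambda_{\max}(G) = 3(v-3)$; because the largest eigenvalue of any graph dominates its average degree, with equality only in the regular case, $G$ must be $3(v-3)$-regular. Hence hypothesis~(i) holds whenever $v$ is large enough that $3(v-3) > \kappa(2)$.

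The crux is hypothesis~(ii): $\bar{d}(G_x) \le (3(v-3)-\kappa(2))/2$ for every $x \in V(G)$. Since $\tr(A(G)^3) = 6\,T(G)$ is a spectral invariant counting triangles, we immediately obtain $\sum_x \bar{d}(G_x) = |V(G)|\,(v-2)$, exactly as for $J(v,3)$ itself; so the \emph{mean} of $\bar{d}(G_x)$ equals $v-2$, comfortably below $(3v-9-\kappa(2))/2$ for $v$ large. Promoting this to a \emph{pointwise} bound is the main obstacle. My plan is to combine (a)~the a priori lower bound $\bar{d}(G_x) \ge v-6$, obtained from the positive semidefinite Gram decomposition $A(G)+3I = B^{\top}B$ by projecting the neighbor-vectors of $x$ onto the orthogonal complement of the vector assigned to $x$, and (b)~higher spectral invariants such as $\tr(A(G)^4)$ together with the spectral gap $\lambda_{\max}(G) - \lambda_2(G) = v$ inherited from $J(v,3)$, in order to show that $\bar{d}(G_x)$ lies within $O(1)$ of its mean $v-2$ for every $x$.

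With (i)--(iii) verified, Theorem~\ref{intro2} yields the conclusion that $G$ is the intersection graph of some linear $3$-uniform hypergraph. The main obstacle is the pointwise control of $\bar{d}(G_x)$: the spectrum pins down the average instantly, but distributing this uniformly over all vertices requires a quantitative quasi-randomness argument leveraging the full spectral gap. As an alternative route one could verify the $5$-plex hypothesis of Theorem~\ref{intro1}, using the Hoffman clique bound $\omega(G) \le 1 + 3(v-3)/3 = v-2$ as a starting point and the fact that the complement of a $5$-plex has maximum degree at most~$4$ (hence chromatic number at most~$5$), but this too leads to essentially the same technical difficulty of sharpening a coarse union-of-cliques bound using the remaining spectral information.
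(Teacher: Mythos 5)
Your reduction to Theorem \ref{intro2} with $t=2$ is the right frame, and your verification of hypotheses (i) and (iii) is fine: cospectrality with the connected regular graph $J(v,3)$ does force $G$ to be connected and $3(v-3)$-regular with $\lambda_{\min}(G)=-3$. But hypothesis (ii) --- the \emph{pointwise} bound on $\bar{d}(G_x)$ --- is the entire content of the verification, and your proposal leaves it open: you only establish that the average over $x$ of $\bar{d}(G_x)$ equals $v-2$ (via $\mathrm{tr}(A^3)$) and then sketch a hoped-for ``quantitative quasi-randomness'' argument using $\mathrm{tr}(A^4)$ and the spectral gap, without carrying it out. Your auxiliary step (a), a lower bound $\bar{d}(G_x)\ge v-6$ from a Gram decomposition, is unsubstantiated and in any case points in the wrong direction (what is needed is an upper bound), and it is unclear how fourth-moment identities, which control only global sums, would pin down the individual diagonal entries of $A^3$. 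So as written there is a genuine gap at the crucial step.

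The missing idea is much simpler than concentration, and it is how the paper argues. A graph cospectral with $J(v,3)$ is connected and regular with exactly four distinct eigenvalues $k=3v-9$, $\lambda_1=2v-9$, $\lambda_2=v-7$, $\lambda_3=-3$, hence $(A-\lambda_1I)(A-\lambda_2I)(A-\lambda_3I)=\frac{1}{n}(k-\lambda_1)(k-\lambda_2)(k-\lambda_3)J$. Expanding, $A^3$ is a linear combination of $A^2$, $A$, $I$, $J$, all of which have constant diagonal for a regular graph, so $G$ is walk-regular: $A^3_{xx}$, i.e.\ twice the number of triangles through $x$, is the \emph{same} for every vertex. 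Therefore $\bar{d}(G_x)$ is not merely $v-2$ on average but equals $v-2$ for every $x$; this is exactly equation (\ref{averagevalency}) applied to the spectrum of $J(v,3)$. With $d(x)=3v-9$, hypothesis (ii) of Theorem \ref{intro2} reads $v-2\le\frac{3v-9-\kappa(2)}{2}$, i.e.\ $v\ge\kappa(2)+5$, so one may take $v'=\kappa(2)+5$ and conclude. No spectral-gap or higher-moment argument is needed; supplying the walk-regularity observation is what your proof is missing.
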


\begin{theorem} \label{grid} There exists a positive integer $t$ such that for each pair integers $(t_1,t_2)$ with $t_1 \geq t_2 \geq t$, any graph, that is cospectral with the $2$-clique extension of the $(t_1 \times t_2)$-grid is the slim graph of a $2$-fat $\{\mathfrak{f}_1,\mathfrak{f}_2,\mathfrak{f}_3\}$-line Hoffman graph.
\end{theorem}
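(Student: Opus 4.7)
The plan is to apply Theorem \ref{intro4} (or its companion Theorem \ref{intro3}) to any graph $G$ cospectral with the $2$-clique extension $\Gamma$ of the $(t_1\times t_2)$-grid. First I would compute $\mathrm{spec}(\Gamma)$. Interpreting the $(t_1\times t_2)$-grid as the rook graph $K_{t_1}\square K_{t_2}$, which has smallest eigenvalue $-2$, the $2$-clique extension is the lexicographic product with $K_2$; its adjacency matrix equals $A(\text{grid})\otimes J_2+I\otimes(J_2-I_2)$, so its spectrum is $\{2\lambda+1:\lambda\in\mathrm{spec}(\text{grid})\}\cup\{-1\}^{t_1 t_2}$. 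Hence $\Gamma$ is regular of degree $d:=2t_1+2t_2-3$ on $n:=2t_1 t_2$ vertices, with smallest eigenvalue $-3$; a direct count also yields $\bar{d}(\Gamma_x)\sim d/2$ as $t_1,t_2\to\infty$.

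From cospectrality I would deduce that $G$ has $n$ vertices, is $d$-regular (the largest eigenvalue of $\Gamma$ equals its average degree, a property preserved by cospectrality), satisfies $\lambda_{\min}(G)=-3$, and contains the same number of triangles as $\Gamma$ via $\tr(A^3)$. Hypotheses (i) and (iii) of Theorem \ref{intro4} are then immediate once the threshold $t$ is chosen large enough that $d>\kappa$ whenever $t_1\geq t_2\geq t$. It remains to verify hypothesis (ii), the pointwise estimate $\bar{d}(G_x)\leq d-\kappa$ for every vertex $x$; once this is settled, Theorem \ref{intro4} immediately yields the desired conclusion.

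To establish the pointwise bound I would proceed as follows. By invariance of $\tr(A^3)$ the global average of $\bar{d}(G_x)$ equals $\bar{d}(\Gamma_x)\sim d/2$, far below $d-\kappa$; this is, however, only an average statement. I would promote it to a pointwise bound using two ingredients. First, Cauchy interlacing gives $\lambda_{\min}(G_x)\geq -3$ for every $x$, so each $G_x$ is itself a graph with smallest eigenvalue $\geq -3$ and is therefore structurally restricted. Second, cospectrality also fixes $\tr(A^4)$, which in a regular graph controls $\sum_{x\neq y}|N(x)\cap N(y)|^2$ and thereby the variance of the local triangle counts $|E(G_x)|$. Combined with the average bound and the $\lambda_{\min}\geq-3$ constraint, these should suffice to rule out even a single vertex with $\bar{d}(G_x)>d-\kappa$.

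The hard part is exactly this last step: converting global spectral invariants into a uniform pointwise estimate. Should the variance argument prove insufficient, a plausible alternative is to verify instead the $5$-plex condition of Theorem \ref{intro3}, since in $\Gamma$ the maximum order of a $5$-plex containing a given vertex is bounded by an absolute constant, and one may hope that cospectrality transfers such a bound to $G$. Either route reduces Theorem \ref{grid} to the structural results already established in Theorems \ref{intro3}--\ref{intro4}.
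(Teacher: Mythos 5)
There is a genuine gap. Your primary route (Theorem \ref{intro4}) requires the \emph{pointwise} bound $\bar{d}(G_x)\leq d(x)-\kappa$, and the tools you propose cannot deliver it: the trace invariants $\tr(A^3)$, $\tr(A^4)$ only control sums over all vertices, and here a single exceptional vertex with $\bar{d}(G_x)$ close to $d\approx 2t_1+2t_2$ would contribute only $O\!\left((t_1+t_2)^2\right)$ to $\tr(A^3)\approx 8t_1^3t_2$, so it is invisible to any global averaging or second-moment argument (note also that $\sum_x\bigl((A^3)_{xx}\bigr)^2$ is not a spectral invariant). Interlacing does give $\lambda_{\min}(G_x)\geq -3$, but that does not cap $\bar{d}(G_x)$ away from $d(x)$ --- $G_x$ could a priori be close to a clique, which has smallest eigenvalue $\geq -1$ and $\bar{d}(G_x)=d(x)-1$. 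Moreover, unlike the $H(3,q)$ and $J(v,3)$ applications, the $2$-clique extension of the grid has \emph{five} distinct eigenvalues, so the walk-regularity argument behind (\ref{averagevalency}) is unavailable and cospectrality really does fix only the average of $\bar{d}(G_x)$, not its values. So the route through Theorem \ref{intro4} is not salvageable as described, and you essentially concede this.

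Your fallback is the paper's actual route, but you leave its key step as a hope, and your auxiliary claim is also off: in $\Gamma$ itself a row clique (of order $2t_1$) is a $5$-plex, so $5$-plex orders are not bounded by an absolute constant --- what is needed, and true, is the bound relative to $d(x)-K$. The missing idea is that no ``transfer'' from $\Gamma$ to $G$ is required at all: Theorem \ref{plexorder} bounds the order of any $(p+1)$-plex in a regular graph purely in terms of $n$, $k$ and $\theta_2$, all of which are spectral invariants, hence it applies directly to the cospectral graph $G$. Concretely, $G$ is $(2t_1+2t_2-3)$-regular on $2t_1t_2$ vertices with $\theta_2=2t_1-3$, so every $5$-plex has order at most
\[
\frac{2t_1t_2\,(5+2t_1-3)}{2t_1t_2-(2t_1+2t_2-3)+(2t_1-3)}=\frac{2t_1(t_1+1)}{t_1-1},
\]
and $d(x)-\frac{2t_1(t_1+1)}{t_1-1}\geq 2t_2-9\geq K$ once $t_2\geq\lceil (K+9)/2\rceil$. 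This verifies hypothesis (ii) of Theorem \ref{intro3} at every vertex and finishes the proof; without invoking Theorem \ref{plexorder} (or an equivalent quotient-matrix interlacing argument as in Lemma \ref{matrixinterlace}), your proposal does not close the argument.
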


\begin{remark}
\begin{enumerate}
\item Theorem $\ref{Hamming}$ was first shown by Bang et al. $\cite{Bang}$. They also obtained that $q^\prime\leq 36$.
\item Van Dam et al. \cite{VanDam&Haemers} gave two constructions for non-distance-regular graphs with the same spectrum as $J(v,3)$ for all $v$. One method used Godsil-McKay switching and the other method was to construct such graphs as the point graph of a partial linear space. Theorem \ref{Johnson} states that if $v$ is large enough, these graphs have to be the point graphs of partial linear spaces.
\item In \cite{YAK}, we showed that the $2$-clique extension of the  $(t\times t)$-grid is characterized by its spectrum, if $t$ is large enough. In order to show it, Theorem \ref{grid} was of crucial importance.
\item Very recently, we generalized Theorem \ref{Hoffman} to the class of graphs with smallest eigenvalue at least $-3$. We were able to show that, if $G$ has smallest eigenvalue between $-2$ and $-3$, then there exists an integral matrix $N$ such that $2A(G) + 6I = N^TN$ holds, where $A(G)$ is the adjacency matrix of $G$. Theorems 1.4 and 1.5 are non-trivial refinements of this new result.
\end{enumerate}
\end{remark}

To show Theorems \ref{intro1}--\ref{intro4}, we use the theory of Hoffman graphs as introduced by Woo and Neumaier \cite{Woo}. In Section $2$, we give definitions and basic theory of Hoffman graphs. In Section $3$, we define a set of matrices $\mathcal{M}(t)$ and a finite family of $t$-fat Hoffman graph $\Go(t)$, where $t$ is a positive integer. In Corollary \ref{line1}, we use the set of matrices, $\mathcal{M}(t)$, to show that a $t$-fat Hoffman graph with smallest eigenvalue greater than $-t-\sqrt{2}$ is a $t$-fat $\Go(t)$-line Hoffman graph. In Section $4$, we consider associated Hoffman graphs of graphs, (as introduced by Kim et al. \cite{HJJ}) and show some properties for these associated Hoffman graphs to be used later. In Theorem \ref{main1} and Theorem \ref{main2}, we prove that if a graph $G$ satisfies some local conditions and has smallest eigenvalue at least $-t-1$, then its associated Hoffman graph is a $\Go(t)$-line Hoffman graph. This implies that $G$ is highly structured. Also, in Section $5$, we show Theorems \ref{intro1} and \ref{intro3} (resp. Theorems \ref{intro2} and \ref{intro4}) as a consequence of Theorem \ref{main1} (resp. Theorem \ref{main2}). In the last section, we give proofs of Theorems \ref{Hamming}, \ref{Johnson} and \ref{grid}, by using Theorems \ref{intro1}--\ref{intro4}.

\section{Definitions and preliminaries}
\subsection{Graphs and $h$-uniform hypergraphs}
All the graphs considered in this paper are finite, undirected and simple. For a given graph $G=(V(G),E(G))$, the \emph{adjacency matrix} $A(G)$ of $G$ is the $(0,1)$-matrix with rows and columns indexed by the vertices of $G$ such that the $uv$-entry of $A(G)$ is equal to $1$ if and only if $u$ and $v$ are adjacent in $G$. The \emph{eigenvalues} of $G$ are the eigenvalues of $A(G)$. The \emph{spectrum} of $G$ is the multiset $$\big\{\lambda_0^{m_0},\lambda_1^{m_1},\ldots,\lambda_t^{m_t}\big\},$$ where  $\lambda_0,\lambda_1,\ldots,\lambda_t$ are the distinct eigenvalues of $G$ and $m_i$ is the multiplicity of $\lambda_i$ ($i=0,1,\ldots,t$). Two graphs are called \emph{cospectral} if they have the same spectrum.

A graph $G$ is called \emph{walk-regular} if the number of closed walks of length $r$ starting at a given vertex $x$ is independent of the choice of $x$ for each $r$. Since this number equals $A^r_{xx}$, it is the same as saying that $A^r$ has constant diagonal for all $r$, where $A=A(G)$ is the adjacency matrix of $G$. It is easy to see that a walk-regular graph is always regular.

For the following discussion, we follow Van Dam \cite{VanDam}. For convenience, in this paper we always denote by $I$ and $J$ the identity matrix and all-one matrix respectively, and we point out their orders by giving subscripts if necessary.

Let $G$ be a connected $k$-regular graph with $n$ vertices and adjacency matrix $A$. Suppose that $G$ has exactly four distinct eigenvalues $\lambda_0=k,\lambda_1,\lambda_2,\lambda_3$, then $(A-\lambda_1I)(A-\lambda_2I)(A-\lambda_3I)=\frac{1}{n}(k-\lambda_1)(k-\lambda_2)(k-\lambda_3)J$. Since $A^2,A,I$ and $J$ all have constant diagonal, we see that $A^r$ has constant diagonal for every $r$. This implies that $G$ is walk-regular. In particular, $A^3_{xx}$, which counts twice the number of triangles through $x$, does not depend on the vertex $x$ and equals  $(\lambda_1+\lambda_2+\lambda_3)k+\lambda_1\lambda_2\lambda_3+\frac{1}{n}(k-\lambda_1)(k-\lambda_2)(k-\lambda_3)$. Hence, for any vertex $x$ in $G$, the local graph $G_x$ of $G$ at $x$ has average degree
\begin{equation}\label{averagevalency}
\bar{d}(G_x)=\lambda_1+\lambda_2+\lambda_3+\frac{1}{k}\lambda_1\lambda_2\lambda_3+\frac{1}{nk}(k-\lambda_1)(k-\lambda_2)(k-\lambda_3).
\end{equation}

In $\cite{Plex}$, a generalization of a clique is introduced as follows:

\begin{definition}\label{plex}
Let $p$ be a positive integer. A $p$-\emph{plex} is an induced subgraph in which each vertex is adjacent to all but at most $p-1$ of the other vertices.
\end{definition}

Note that a clique is exactly the same as a $1$-plex.

A \emph{hypergraph} $H$ is a pair $(X,E)$, where the elements of $E$ are non-empty subsets (of any cardinality) of the finite set $X$. The set $X$ is called the vertex set of $H$ and $E$ is called the (hyper)edge set of $H$. For $h\geq2$, a hypergraph $H$ is said to be \emph{$h$-uniform} if every edge in $H$ has cardinality $h$. The hypergraph $H$ is \emph{linear} if every pair of distinct vertices of $H$ is contained in at most one edge of $H$.
\begin{definition}Suppose $H=(X,E)$ is a hypergraph. The \emph{intersection graph} of $H$ is the graph with vertex set $E$ and edge set consisting of all unordered pairs $\{e,e^\prime\}$ of distinct elements of $E$ such that $e\cap e^\prime\neq\emptyset$.
\end{definition}

Clearly, a $2$-uniform hypergraph is a graph and its intersection graph is the usual line graph.

\subsection{Matrices and interlacing}
Suppose $M_1$ is a real symmetric $n\times n$ matrix, and b$M_2$ is a real symmetric $m\times m$ matrix ($m\leq n$). Let $\theta_1(M_1)\ge\theta_2(M_1)\ge\cdots\ge\theta_n(M_1)$ and $\theta_1(M_2)\ge\theta_2(M_2)\ge\cdots\ge\theta_m(M_2)$ denote their eigenvalues in nonincreasing order. We say that the eigenvalues of $M_2$ \emph{interlace} the eigenvalues of $M_1$ if for $i=1,\ldots,m,$
\begin{equation*}
\theta_{n-m+i}(M_1)\leq \theta_{i}(M_2)\leq\theta_i(M_1).
\end{equation*}

By \cite[Theorem 9.1.1]{Ch&G} and \cite[Lemma 9.6.1]{Ch&G}, we have the following results on interlacing:

\begin{lemma}\label{matrixinterlace}
\begin{enumerate}
\item Suppose $M$ is a real symmetric matrix and $M^\prime$ is any principal submatrix of $M$. Then the eigenvalues of $M^\prime$ interlace the eigenvalues of $M$.
\item Suppose $G$ is a graph and let $\pi$ be a partition of $V(G)$ with cells $V_1,\ldots,V_r$. Define the \emph{quotient matrix} of $A(G)$ relative to $\pi$ to be the $r\times r$ matrix $B$ such that the $ij$-entry of $B$ is the average number of neighbors in $V_j$ of vertices in $V_i$. Then the eigenvalues of $B$ interlace the eigenvalues of $A(G)$.
\end{enumerate}
\end{lemma}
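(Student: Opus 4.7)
The plan is to prove both parts via the Courant--Fischer min-max characterization of eigenvalues. Recall that for a real symmetric $N\times N$ matrix $M$ with eigenvalues $\theta_1(M)\ge\cdots\ge\theta_N(M)$, we have
\[
\theta_i(M)=\max_{\dim S=i}\ \min_{0\ne x\in S}\frac{x^{T}Mx}{x^{T}x}=\min_{\dim T=N-i+1}\ \max_{0\ne x\in T}\frac{x^{T}Mx}{x^{T}x}.
\]
Both halves of the lemma will be obtained by exhibiting an isometric embedding $\iota:\mathbb{R}^{m}\hookrightarrow\mathbb{R}^{n}$ (so that $\iota^{T}\iota=I_{m}$) whose compression $\iota^{T}M\iota$ is the smaller matrix in question; interlacing is then automatic by transporting test subspaces through $\iota$.

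For part (i), let $M'$ be the principal submatrix of $M$ indexed by $I\subseteq\{1,\dots,n\}$ with $|I|=m$, and let $E:\mathbb{R}^{m}\to\mathbb{R}^{n}$ be the canonical coordinate inclusion, so that $E^{T}E=I_{m}$ and $M'=E^{T}ME$. To get $\theta_{i}(M')\le\theta_{i}(M)$, apply the max-min form: any $i$-dimensional $S\subseteq\mathbb{R}^{m}$ yields an $i$-dimensional $ES\subseteq\mathbb{R}^{n}$ with $(Ex)^{T}M(Ex)/\|Ex\|^{2}=x^{T}M'x/\|x\|^{2}$. To get $\theta_{n-m+i}(M)\le\theta_{i}(M')$, apply the same argument to $-M$ (whose eigenvalues reverse), or equivalently use the min-max form with subspaces of codimension $i-1$.

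For part (ii), I would realize $B$ as a compression of $A(G)$ up to similarity. Let $P$ be the $n\times r$ characteristic matrix of $\pi$, whose $(v,j)$-entry is $1$ if $v\in V_{j}$ and $0$ otherwise. Then $P^{T}P=D:=\operatorname{diag}(|V_{1}|,\dots,|V_{r}|)$, and a direct count of edges between cells gives $P^{T}A(G)P=DB$, so $B=D^{-1}P^{T}A(G)P$. Setting $Q:=PD^{-1/2}$ one checks $Q^{T}Q=I_{r}$, and
\[
Q^{T}A(G)Q=D^{-1/2}P^{T}A(G)PD^{-1/2}=D^{1/2}BD^{-1/2},
\]
which is similar to $B$; in particular $Q^{T}A(G)Q$ is symmetric with the same spectrum as $B$. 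Now the embedding $Q:\mathbb{R}^{r}\hookrightarrow\mathbb{R}^{n}$ is an isometry, so the same min-max argument as in part (i) (applied to $Q$ in place of $E$) shows that the eigenvalues of $Q^{T}A(G)Q$, equivalently those of $B$, interlace the eigenvalues of $A(G)$.

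There is no real obstacle here: the content is entirely the identification $B\sim Q^{T}AQ$ with $Q^{T}Q=I_{r}$, after which both parts reduce to the same one-line Rayleigh-quotient comparison. The only careful step is verifying $P^{T}AP=DB$, which is an unpacking of the definition of the quotient matrix: the $(i,j)$-entry of $P^{T}AP$ counts edges between $V_{i}$ and $V_{j}$, which equals $|V_{i}|$ times the average number of $V_{j}$-neighbors of a vertex in $V_{i}$, namely $D_{ii}B_{ij}$.
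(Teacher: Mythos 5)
Your proof is correct: the identity $P^{T}A(G)P=DB$ is verified properly, $Q=PD^{-1/2}$ is indeed an isometry with $Q^{T}A(G)Q=D^{1/2}BD^{-1/2}$ similar to $B$, and the two-sided Courant--Fischer argument (upper bounds directly, lower bounds by passing to $-M$) gives exactly the interlacing inequalities $\theta_{n-m+i}(M)\le\theta_i(M')\le\theta_i(M)$. The paper itself offers no proof of this lemma --- it simply cites Theorem 9.1.1 and Lemma 9.6.1 of Godsil and Royle --- and your argument is essentially the standard one found there, reducing both parts to the single statement that a compression $Q^{T}MQ$ with $Q^{T}Q=I$ has interlacing eigenvalues.
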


The following result is about an upper bound on the order of a $(p+1)$-plex for a regular graph. It is a generalization of the Hoffman bound on independent sets for regular graphs (see, \cite[Lemma $9.6.2$]{Ch&G}).

\begin{theorem}\label{plexorder}
Let $G$ be a $k$-regular graph with $n$ vertices and let $\theta_1=k \geq \theta_2 \geq \dots \geq \theta_n$ be the eigenvalues of $G$. Then the order of a $(p+1)$-plex in $G$ is at most $\frac{n(p+1 +\theta_2)}{n-k+\theta_2}$.
\end{theorem}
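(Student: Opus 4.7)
The plan is to adapt the standard Hoffman ratio bound argument by combining interlacing with the $2$-cell quotient matrix induced by the plex. Let $S$ be a $(p+1)$-plex of order $s$ in $G$ and consider the partition $\pi=\{S,\,V(G)\setminus S\}$ of $V(G)$. Let $B$ denote the quotient matrix of $A(G)$ relative to $\pi$. Since $G$ is $k$-regular, both row sums of $B$ equal $k$, so $k$ is an eigenvalue of $B$; writing $\mu$ for the other eigenvalue, we have $\mu = \mathrm{tr}(B) - k$.

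Next, let $e(S)$ denote the number of edges of $G$ with both endpoints in $S$. Counting degrees in $S$ via $k$-regularity shows that the number of edges between $S$ and $V(G)\setminus S$ equals $sk - 2e(S)$, and a short direct computation yields
\[
\mu = k - \frac{n\,(sk - 2e(S))}{s(n-s)}.
\]
At this point I would invoke the defining property of a $(p+1)$-plex: each vertex of $S$ has at least $s-1-p$ neighbors inside $S$, hence $2e(S) \geq s(s-1-p)$. Substituting this into the displayed expression produces a lower bound on $\mu$ depending only on $n$, $k$, $p$, and $s$.

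Finally, by Lemma \ref{matrixinterlace}(ii) the eigenvalues of $B$ interlace those of $A(G)$; since $\theta_1(B) = k = \theta_1$, interlacing forces $\mu = \theta_2(B) \leq \theta_2$. Chaining the upper bound $\mu \leq \theta_2$ with the lower bound from the previous step and rearranging gives
\[
n(\theta_2 + 1 + p) \;\geq\; s\,(n - k + \theta_2),
\]
from which the claimed bound on $s$ follows upon dividing by $n - k + \theta_2$. The only point requiring a moment's care—rather than a genuine obstacle—is the sign condition $n - k + \theta_2 > 0$ needed to preserve the inequality on division; this is satisfied except in degenerate cases such as $G = K_n$, where every subset of $V(G)$ is a $(p+1)$-plex and the statement is vacuous. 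Apart from this bookkeeping, the proof is entirely forced once the quotient matrix and the plex inequality are combined with the interlacing lemma.
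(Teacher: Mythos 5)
Your proposal is correct and follows essentially the same route as the paper: both consider the partition $\{S, V(G)\setminus S\}$, apply quotient-matrix interlacing (Lemma \ref{matrixinterlace}(ii)) to the $2\times 2$ quotient matrix whose non-trivial eigenvalue is bounded below using the plex condition (your edge count $2e(S)\geq s(s-1-p)$ is just the paper's $\alpha\geq m-(p+1)$ in aggregate form), and then rearrange. Your remark on the sign of $n-k+\theta_2$ is a harmless refinement the paper leaves implicit.
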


\begin{proof}
Let $P$ be a $(p+1)$-plex of $G$ with $|V(P)|=m$. Consider the partition $\pi=\big\{V(P),V(G)-V(P)\big\}$ of $V(G)$. The quotient matrix $B$ of A(G) relative to $\pi$ is
$$B= \begin{pmatrix}
\alpha & k-\alpha \\
\beta & k-\beta
\end{pmatrix},$$
where $\alpha$ and $\beta$ are real numbers satisfying $\alpha\ge m-(p+1)$ and $\beta= \frac{m(k-\alpha)}{n-m} $. Then $B$ has eigenvalues $k$ and $\alpha-\beta$, as tr$~B=k+\alpha-\beta$. By Lemma \ref{matrixinterlace} (ii), we have $m-(p+1)-\frac{m(k-(m-(p+1)))}{n-m}\leq\alpha-\beta\leq \theta_2$. This completes the proof.
\end{proof}

\subsection{Hoffman graphs and special matrices}
In this subsection we introduce Hoffman graphs and their special matrices.
\begin{definition}\label{hoffmangraph}
A \emph{Hoffman graph} $\ho$ is a pair $(H, \ell)$, where $H=(V,E)$ is a graph and $\ell:V\rightarrow\{{\bf fat},{\bf slim}\}$ is a labeling map on $V$, such that no two vertices with label {\bf fat} are adjacent and every vertex with label {\bf fat} has at least one neighbor with label {\bf slim}.
\end{definition}

The vertices with label {\bf fat} are called \emph{fat vertices}, and the vertices with label {\bf slim} are called \emph{slim vertices}. We denote the set of slim vertices by $V_{{\rm slim}}(\ho)$ and the set of fat vertices by $V_ {\rm fat}(\ho)$. For a vertex $x$ of $\mathfrak{h}$, the set of slim (resp. fat) neighbors of $x$ in $\mathfrak{h}$ is denoted by $N_{\mathfrak{h}}^{{\rm slim}}(x)$ (resp. $N_{\mathfrak{h}}^{{\rm fat}}(x)$).
A Hoffman graph is called \emph{$t$-fat} if every slim vertex has at least $t$ fat neighbors, where $t$ is a positive integer. A \emph{fat} Hoffman graph is a $1$-fat Hoffman graph.

The \emph{slim graph} of $\ho$ is the subgraph of $H$ induced by $V_{\rm slim}(\ho)$. Note that we consider the slim graph of a Hoffman graph as an ordinary graph.

\begin{definition}Suppose $\ho= (H, \ell)$ is a Hoffman graph.
\begin{enumerate}

\item The Hoffman graph $\ho_1 = (H_1, \ell_1)$ is called an \emph{induced Hoffman subgraph} of $\ho$, if $H_1$ is an induced subgraph of $H$ and $\ell_1(x) = \ell(x)$ for all vertices $x$ of $H_1$.

\item Let $W$ be a subset of $V_{\rm slim}(\ho)$. The induced Hoffman subgraph of $\ho$ generated by $W$, denoted by $\langle W\rangle_{\ho}$, is the Hoffman subgraph of $\ho$ induced by $W \cup \{f \in V_{\rm fat}(\ho)$ $|$ $f \sim w$ for some $w \in W \}$.
\end{enumerate}
\end{definition}

\begin{definition}Two Hoffman graphs $\ho= (H, \ell)$ and $\ho^\prime=(H^\prime, \ell^\prime)$ are \emph{isomorphic} if there exists an isomorphism from $H$ to $H^\prime$ which preserves the labeling.
\end{definition}

For a Hoffman graph $\ho= (H, \ell)$, there exists a matrix $C$ such that the adjacency matrix $A$ of $H$ satisfies
\begin{eqnarray*}
A=\left(
\begin{array}{cc}
A_{\rm slim}  & C\\
C^{T}  & O
\end{array}
\right),
\end{eqnarray*}
where $A_{\rm slim}$ is the adjacency matrix of the slim graph of $\mathfrak{h}$. The
real symmetric matrix $S(\ho):=A_{\rm slim}-CC^T$ is called the \emph{special matrix} of $\ho$. The
\emph{eigenvalues} of $\ho$ are the eigenvalues of $S(\ho)$, and the smallest eigenvalue of $\ho$ is denoted by  $\lambda_{\min}(\ho)$.

Note that Hoffman graphs are not determined by their special matrices. Let $\ho_1$ and $\ho_2$ be the two Hoffman graphs shown in Figure \ref{fsamespecialmatrix}. They have both $\begin{pmatrix}
-2 & -1 \\
-1 & -2
\end{pmatrix}$ as their special matrix, but they are not isomorphic.
\begin{center}
\begin{figure}[H]
\includegraphics[scale=1.0]{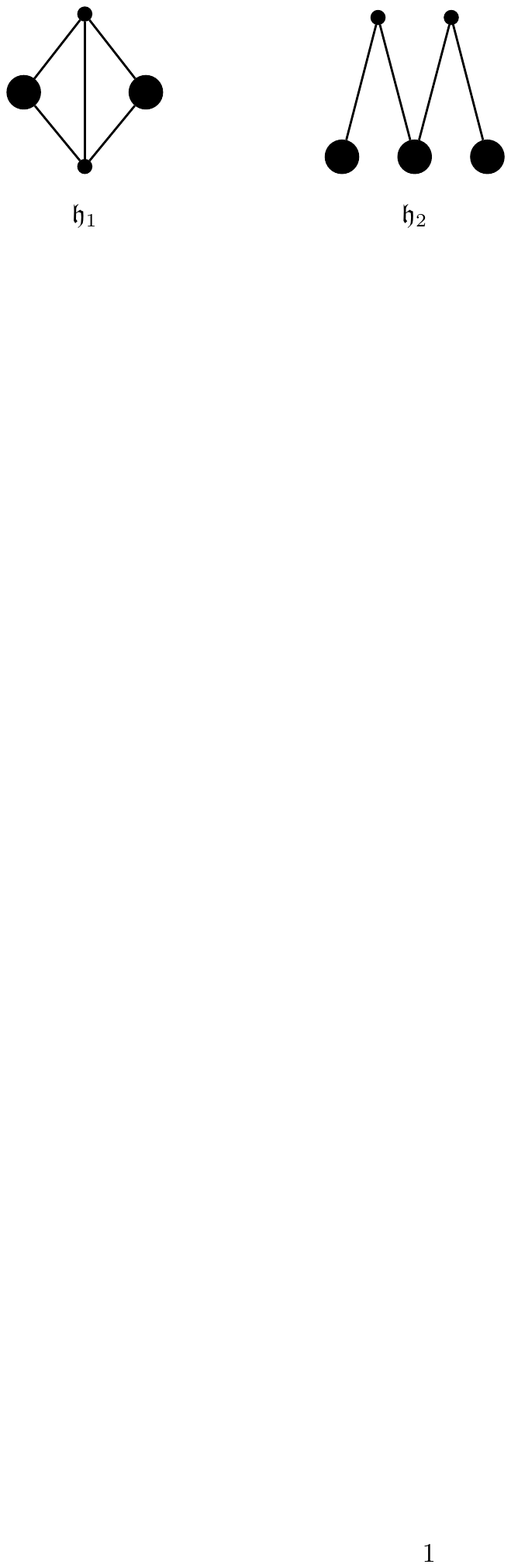}
{\caption{}\label{fsamespecialmatrix}}
\end{figure}
\end{center}
\vspace{-1cm}
\begin{lemma}\label{special} Let $S$ be the special matrix of a Hoffman graph $\ho$. Then $S$ satisfies the following properties:
\begin{enumerate}

\item $S$ is a symmetric matrix with integral entries;

\item $S_{xx}\leq 0$;

\item $S_{xy}\leq 1$ if $x\neq y$;

\item $S_{xx}\leq S_{xy}$ for all $x, y$.

\end{enumerate}
\end{lemma}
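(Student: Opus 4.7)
The plan is to unpack the definition of the special matrix entry-by-entry. Writing $C$ as the slim-by-fat incidence matrix of $\ho$, the entry $(CC^T)_{xy}$ is just $|N_{\ho}^{\rm fat}(x)\cap N_{\ho}^{\rm fat}(y)|$, so for any slim vertices $x,y$ we have the explicit formula
\[
S_{xy}=(A_{\rm slim})_{xy}-|N_{\ho}^{\rm fat}(x)\cap N_{\ho}^{\rm fat}(y)|.
\]
Once this formula is in hand, the four properties are straightforward checks, and I would just verify them in order.

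For (i), $A_{\rm slim}$ is an integer symmetric matrix and $CC^T$ is integer symmetric because $C$ is a $(0,1)$-matrix, so $S$ is symmetric with integer entries. For (ii), the diagonal entry $(A_{\rm slim})_{xx}$ is $0$ since the slim graph has no loops, and $(CC^T)_{xx}=|N_{\ho}^{\rm fat}(x)|\ge 0$, giving $S_{xx}=-|N_{\ho}^{\rm fat}(x)|\le 0$. For (iii), when $x\ne y$ we have $(A_{\rm slim})_{xy}\in\{0,1\}$ while $|N_{\ho}^{\rm fat}(x)\cap N_{\ho}^{\rm fat}(y)|\ge 0$, so $S_{xy}\le 1$.

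For (iv), the case $x=y$ is trivial, and for $x\ne y$ I would compute
\[
S_{xy}-S_{xx}=(A_{\rm slim})_{xy}+|N_{\ho}^{\rm fat}(x)|-|N_{\ho}^{\rm fat}(x)\cap N_{\ho}^{\rm fat}(y)|,
\]
and observe that both $(A_{\rm slim})_{xy}\ge 0$ and $|N_{\ho}^{\rm fat}(x)|-|N_{\ho}^{\rm fat}(x)\cap N_{\ho}^{\rm fat}(y)|\ge 0$, since the intersection is contained in $N_{\ho}^{\rm fat}(x)$. Hence $S_{xy}\ge S_{xx}$.

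There is no real obstacle here; the statement is essentially a bookkeeping consequence of the definition of $S(\ho)$ combined with the fact that the fat vertices form an independent set (used implicitly only in that the fat-by-fat block of $A$ is $O$, which is why $S$ has exactly the form $A_{\rm slim}-CC^T$). The only minor subtlety worth flagging is property (iv), where one has to phrase the inequality as an inclusion of fat neighborhoods rather than trying to bound $S_{xy}$ and $S_{xx}$ separately.
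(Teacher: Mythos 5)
Your proof is correct: the entrywise formula $S_{xy}=(A_{\rm slim})_{xy}-|N_{\ho}^{\rm fat}(x)\cap N_{\ho}^{\rm fat}(y)|$ immediately yields all four properties, including (iv) via the inclusion $N_{\ho}^{\rm fat}(x)\cap N_{\ho}^{\rm fat}(y)\subseteq N_{\ho}^{\rm fat}(x)$. The paper states this lemma without proof, treating it as an immediate consequence of the definition of $S(\ho)$, and your verification is exactly the intended argument.
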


%A matrix that satisfies the four properties of Lemma 1.5 is called \emph{special-like matrix}.
The converse of this lemma is not true. For example, there is no Hoffman graph with special matrix $\begin{pmatrix}
-1 & -1 & -1\\
-1 & -1 & 1 \\
-1 & 1 & -1
\end{pmatrix}$.

\begin{lemma}(\cite[Corollary 3.3]{Woo})\label{interlacing}
If $\go$ is an induced Hoffman subgraph of a Hoffman graph $\ho$,
then $\lambda_{\min}(\go)\geq\lambda_{\min}(\ho)$ holds.
\end{lemma}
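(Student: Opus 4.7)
The plan is to realize $S(\go)$ as a principal submatrix of $S(\ho)$ plus a positive semidefinite correction. First I would order the vertices of $\ho$ so that the slim vertices of $\go$ come first among $V_{\rm slim}(\ho)$, and the fat vertices of $\go$ come first among $V_{\rm fat}(\ho)$. With this ordering, $A_{\rm slim}(\ho)$ has $A_{\rm slim}(\go)$ as its top-left block (because $\go$ is an induced subgraph), and the slim-to-fat incidence matrix $C(\ho)$ has $C(\go)$ as its top-left block. Let $\tilde{C}$ denote the $V_{\rm slim}(\go)\times\bigl(V_{\rm fat}(\ho)\setminus V_{\rm fat}(\go)\bigr)$ block of $C(\ho)$.

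The key computation is the $V_{\rm slim}(\go)\times V_{\rm slim}(\go)$ block of $C(\ho)C(\ho)^T$, which equals $C(\go)C(\go)^T+\tilde{C}\tilde{C}^T$. Hence the corresponding principal submatrix of $S(\ho)=A_{\rm slim}(\ho)-C(\ho)C(\ho)^T$ equals $S(\go)-\tilde{C}\tilde{C}^T$. Writing $S(\ho)_{\go}$ for this principal submatrix, I obtain the identity
$$S(\go)=S(\ho)_{\go}+\tilde{C}\tilde{C}^T.$$

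The result then follows by combining two standard facts. By Lemma \ref{matrixinterlace}(i), $\lambda_{\min}\bigl(S(\ho)_{\go}\bigr)\ge\lambda_{\min}(S(\ho))=\lambda_{\min}(\ho)$. Since $\tilde{C}\tilde{C}^T$ is positive semidefinite, Weyl's inequality (or equivalently the Rayleigh quotient characterization) gives $\lambda_{\min}(S(\go))\ge\lambda_{\min}\bigl(S(\ho)_{\go}\bigr)$. Chaining the two inequalities yields $\lambda_{\min}(\go)\ge\lambda_{\min}(\ho)$. There is no real obstacle; the only care needed is tracking the block decomposition of $C(\ho)$ when fat vertices of $\ho$ outside $\go$ are present, which is precisely what produces the PSD correction term $\tilde{C}\tilde{C}^T$ (and which is the whole reason the statement is not merely a principal submatrix interlacing statement).
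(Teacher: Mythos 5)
Your proof is correct. The paper itself gives no argument for this lemma---it simply cites \cite[Corollary 3.3]{Woo}---and your derivation is exactly the standard one behind that citation: the identity $S(\go)=S(\ho)_{\go}+\tilde{C}\tilde{C}^{T}$, where $S(\ho)_{\go}$ is the principal submatrix of $S(\ho)$ indexed by $V_{\rm slim}(\go)$ and $\tilde{C}$ records adjacencies of slim vertices of $\go$ to fat vertices of $\ho$ outside $\go$, combined with Cauchy interlacing (Lemma \ref{matrixinterlace}(i)) and the positive semidefiniteness of $\tilde{C}\tilde{C}^{T}$. You also correctly identify why plain principal-submatrix interlacing alone would not suffice, namely the fat vertices of $\ho$ not retained in $\go$.
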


\subsection{Decompositions of Hoffman graphs}
Now, we introduce decompositions of Hoffman graphs and line Hoffman graphs.

\begin{definition}\label{directsummatrix}
Let $\{\ho^i\}_{i=1}^r$ be a family of Hoffman graphs. We say that $\{\ho^i\}_{i=1}^r$ is a \emph{decomposition} of a Hoffman graph $\ho$, if $\ho$ satisfies the following condition:

There exists a partition $\big\{V_{\rm slim}^1(\ho),V_{\rm slim}^2(\ho),\ldots,V_{\rm slim}^r(\ho) \big\}$ of $V_{\rm slim}(\ho)$ such that induced Hoffman subgraphs generated by $V_{\rm slim}^i(\ho)$ are $\ho^i$ for $i=1,2,\ldots,r$ and
$$S(\ho)=
\begin{pmatrix}
S(\ho^1) &&&\\
& S(\ho^2)&&\\
&&\ddots&\\
&&&S(\ho^r)
\end{pmatrix}
$$ is a block diagonal matrix with respect to this partition of $V_{\rm slim}(\ho)$.
\end{definition}

If a Hoffman graph $\ho$ has a decomposition $\{\ho^i\}_{i=1}^r$, then we write $\ho=\uplus_{i=1}^r\ho^i$.

A Hoffman graph $\ho$ is said to be \emph{decomposable}, if $\ho$ has a decomposition $\{\ho^i\}_{i=1}^r$ with $r\geq2$. Otherwise, $\ho$ is called \emph{indecomposable}. Note that $\ho$ is decomposable if and only if its special matrix $S(\ho)$ is a block diagonal matrix with at least $2$ blocks.

The following lemma gives a combinatorial way to define the decomposability of Hoffman graphs.

\begin{lemma}\label{combi}
Let $\ho$ be a Hoffman graph. A family $\{\ho^i\}_{i=1}^r$ of induced Hoffman subgraphs of $\ho$ is a decomposition of $\ho$, if it satisfies the following conditions:
\begin{enumerate}
\item $V(\ho)=\cup_{i=1}^rV(\ho^i)$;
\item $V_{\rm slim}(\ho^i)\cap V_{\rm slim}(\ho^j)=\emptyset$ for $i\neq j$;
\item if $x \in V_{\rm slim}(\ho^i),~f \in V_{\rm fat}(\ho)$, and $x\sim f$, then $f\in V_{\rm fat}(\ho^i);$
\item if $x \in V_{\rm slim}(\ho^i)$, $y \in V_{\rm slim}(\ho^j)$, and $i\neq j$, then $x$ and $y$ have at most one common fat neighbor, and they have one if and only if they are adjacent.
\end{enumerate}
\end{lemma}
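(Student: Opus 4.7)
The plan is to reduce the statement directly to Definition \ref{directsummatrix}. To do this, I would set $V_{\rm slim}^i(\ho) := V_{\rm slim}(\ho^i)$ for $i=1,\dots,r$ and then prove two things: first, that these sets form a partition of $V_{\rm slim}(\ho)$ for which $\ho^i$ coincides with the generated subgraph $\langle V_{\rm slim}^i(\ho)\rangle_\ho$; second, that the special matrix $S(\ho)$ is block-diagonal with the $i$-th diagonal block equal to $S(\ho^i)$.

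For the first claim, disjointness $V_{\rm slim}(\ho^i)\cap V_{\rm slim}(\ho^j)=\emptyset$ is exactly condition (ii), while surjectivity $V_{\rm slim}(\ho)=\bigcup_i V_{\rm slim}(\ho^i)$ follows from condition (i) together with the fact that each $\ho^i$ is an induced Hoffman subgraph, so labels are inherited from $\ho$. To identify $\ho^i$ with $\langle V_{\rm slim}^i(\ho)\rangle_\ho$, I would observe that by definition $\langle V_{\rm slim}^i(\ho)\rangle_\ho$ has the same slim vertex set as $\ho^i$, and its fat vertex set consists of those $f\in V_{\rm fat}(\ho)$ with a slim neighbor in $V_{\rm slim}^i(\ho)$. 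Condition (iii) shows every such $f$ lies in $V_{\rm fat}(\ho^i)$, and conversely every $f\in V_{\rm fat}(\ho^i)$ must have a slim neighbor in $V_{\rm slim}(\ho^i)$ by Definition \ref{hoffmangraph}, hence belongs to the generated subgraph. Since $\ho^i$ is induced, the edge sets also agree.

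For the block-diagonal claim, recall that for slim vertices $x,y$ the special matrix satisfies
\[
S(\ho)_{xy} = (A_{\rm slim})_{xy} - \bigl|N_{\ho}^{\rm fat}(x)\cap N_{\ho}^{\rm fat}(y)\bigr|.
\]
If $x\in V_{\rm slim}(\ho^i)$ and $y\in V_{\rm slim}(\ho^j)$ with $i\neq j$, condition (iv) says the number of common fat neighbors equals $1$ if $x\sim y$ and $0$ otherwise; in both cases this exactly cancels the $(A_{\rm slim})_{xy}$ term, so $S(\ho)_{xy}=0$, establishing the off-diagonal blocks are zero. For the diagonal block corresponding to $\ho^i$, I would use condition (iii) to conclude that $N_{\ho}^{\rm fat}(x)\subseteq V_{\rm fat}(\ho^i)=V_{\rm fat}(\ho^i)$ for every $x\in V_{\rm slim}(\ho^i)$, so the common fat neighbors of $x,y\in V_{\rm slim}^i(\ho)$ computed inside $\ho$ match those computed inside $\ho^i$; combined with the fact that $\ho^i$ inherits its adjacencies from $\ho$, this gives that the restriction of $S(\ho)$ to $V_{\rm slim}(\ho^i)$ equals $S(\ho^i)$ entry-by-entry.

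Honestly, there is no substantial obstacle in this proof; it is essentially a careful unwinding of the definitions. The only point that requires slight attention is ensuring that the generated Hoffman subgraph $\langle V_{\rm slim}(\ho^i)\rangle_\ho$ genuinely coincides with the given $\ho^i$ (rather than being a proper subgraph of it on the fat side), which is why invoking the Hoffman-graph axiom that every fat vertex has a slim neighbor is essential in step (b) above.
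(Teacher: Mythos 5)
Your proof is correct, and it is essentially the paper's approach: the paper dismisses this lemma with ``by direct verification,'' and your argument is exactly that verification, reducing conditions (i)--(iv) to Definition \ref{directsummatrix} by identifying $\ho^i$ with $\langle V_{\rm slim}(\ho^i)\rangle_\ho$ and checking the block structure of $S(\ho)$ entrywise. No gaps; the point you flag about fat vertices of $\ho^i$ having slim neighbors is indeed the only place where the Hoffman-graph axiom is needed.
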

\begin{proof} By direct verification. \end{proof}

%\begin{example}
%\begin{enumerate}
%\item For Hoffman graphs $\ho'$ and $\ho''$ with $V(\ho')\cap V(\ho'')=\emptyset$, the Hoffman graph $\ho'\uplus \ho''$ is just the disjoint union of $\ho'$ and $\ho''$.
%
%\item Let $\ho_3$, $\ho_4$ and $\ho_5$ be the Hoffman graphs shown in Figure \ref{fhoffmangraphs}. Clearly, $\ho_4$ and $\ho_5$ are isomorphic as Hoffman graphs.
%
%\begin{center}
%\begin{figure}[H]
%\includegraphics[scale=1.0]{ex2-1}
%{\caption{}\label{fhoffmangraphs}}
%\end{figure}
%\end{center}
%\vspace{-1cm}
%
%The Hoffman graphs $\ho_3 \uplus \ho_4$ and  $\ho_3 \uplus \ho_5$ are as in Figure \ref{fdirectsum}. Note that $\ho_3\uplus \ho_4$ is not isomorphic to $\ho_3 \uplus \ho_5$.
%\begin{center}
%\begin{figure}[H]
%\includegraphics[scale=1.0]{ex2-2}
%{\caption{}\label{fdirectsum}}
%\end{figure}
%\end{center}
%\vspace{-1cm}
%\end{enumerate}
%\end{example}

\begin{definition}
Let $\Ho$ be a family of pairwise non-isomorphic Hoffman graphs. A Hoffman graph $\ho$ is called a \emph{$\Ho$-line Hoffman graph} if there exists a Hoffman graph $\ho^\prime$ satisfying the following conditions:
\begin{enumerate}
\item $\ho^\prime$ has $\ho$ as an induced Hoffman subgraph;
\item $\ho^\prime$ has the same slim graph as $\ho$;
\item $\ho^\prime=\uplus_{i=1}^{r}\ho_i^\prime$, where $\ho_i^\prime$ is isomorphic to an induced Hoffman subgraph of some Hoffman graph in $\Ho$ for $i=1,\dots,r$.
\end{enumerate}
\end{definition}

Let $t$ be a positive integer. The Hoffman graph with one slim vertex adjacent to $t$ fat vertices is the unique Hoffman graph with the special matrix $(-t)$. We denote it by $\mathfrak{h}^{(t)}$.

\begin{lemma}\label{t-cherry} Let $t$ be a positive integer. The $t$-fat Hoffman graph $\ho$ with special matrix $S(\ho)=J_n-(t+1)I_n$ is unique and this Hoffman graph is a $\big\{\ho^{(t+1)}\big\}$-line Hoffman graph.
\end{lemma}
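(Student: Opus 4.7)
The plan is to read off the combinatorial structure of $\ho$ directly from the identity $S(\ho) = J_n - (t+1)I_n$ and then to construct the auxiliary Hoffman graph $\ho'$ by adding a single new fat vertex adjacent to every slim vertex.

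First I would decode the diagonal and off-diagonal entries of $S(\ho)$ using the identity $S(\ho) = A_{\mathrm{slim}} - CC^T$. The diagonal entries give $-t = S_{xx} = -(CC^T)_{xx} = -|N^{\mathrm{fat}}_{\ho}(x)|$, so each slim vertex has exactly $t$ fat neighbours (this matches with $\ho$ being $t$-fat). The off-diagonal entries give $1 = A_{\mathrm{slim},xy} - |N^{\mathrm{fat}}_{\ho}(x) \cap N^{\mathrm{fat}}_{\ho}(y)|$ for $x \neq y$ slim; since the left side equals $1$ and $A_{\mathrm{slim},xy} \in \{0,1\}$ while the intersection is non-negative, we must have $A_{\mathrm{slim},xy} = 1$ and $|N^{\mathrm{fat}}_{\ho}(x) \cap N^{\mathrm{fat}}_{\ho}(y)| = 0$. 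Thus the slim graph is a clique $K_n$, and the $n$ fat neighbourhoods are pairwise disjoint sets of cardinality $t$. This structure determines $\ho$ up to isomorphism, giving uniqueness.

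For the line Hoffman graph assertion I would define $\ho'$ by adjoining one new fat vertex $f$ and making it adjacent to every slim vertex of $\ho$, keeping all other adjacencies. Then $\ho$ is an induced Hoffman subgraph of $\ho'$ (delete $f$), and $\ho'$ has the same slim graph as $\ho$. Take the partition of $V_{\mathrm{slim}}(\ho') = V_{\mathrm{slim}}(\ho)$ into singletons $\{x\}$, and set $\ho'_x := \langle\{x\}\rangle_{\ho'}$. By construction $\ho'_x$ consists of the slim vertex $x$ together with its $t$ private fat neighbours and the new shared fat vertex $f$, so it is isomorphic to $\ho^{(t+1)}$.

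Finally I would check that $\{\ho'_x\}_{x \in V_{\mathrm{slim}}(\ho)}$ really is a decomposition of $\ho'$ by verifying the four conditions of Lemma \ref{combi}. Conditions (i)--(iii) are immediate from the construction (every vertex of $\ho'$ lies in some $\ho'_x$, the slim parts are singletons and hence disjoint, and $\ho'_x$ contains every fat neighbour of $x$ in $\ho'$). For condition (iv), any two distinct slim vertices $x, y$ are adjacent in the slim clique, and their common fat neighbours in $\ho'$ are precisely $\{f\}$, since their original fat neighbourhoods in $\ho$ were disjoint; thus they share exactly one common fat neighbour, as required. This shows $\ho' = \uplus_{x} \ho'_x$ with each summand isomorphic to $\ho^{(t+1)}$, so $\ho$ is a $\{\ho^{(t+1)}\}$-line Hoffman graph. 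The only subtlety is noticing that the off-diagonal $1$ in $J_n - (t+1)I_n$ simultaneously forces the slim graph to be complete and forbids shared fat neighbours in $\ho$, which is why adding just one universal fat vertex suffices to turn $\ho$ into a disjoint union of copies of $\ho^{(t+1)}$.
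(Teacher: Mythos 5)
Your proposal is correct and follows essentially the same route as the paper: reading off from $S(\ho)=J_n-(t+1)I_n$ that the slim graph is $K_n$ with each slim vertex having its own $t$ private fat neighbours (uniqueness), then adjoining one fat vertex adjacent to all slim vertices and decomposing into $n$ copies of $\ho^{(t+1)}$. Your explicit verification of the conditions of Lemma \ref{combi} just fills in details the paper leaves to the reader.
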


\begin{proof}
Uniqueness is clear because the only possible Hoffman graph for $\ho$ is a Hoffman graph such that its slim
graph is $K_n$, and every slim vertex has its own $t$ fat neighbors. By adding one fat vertex which is adjacent to all slim vertices, we obtain a new Hoffman graph which can be decomposed as $n$ copies of $\ho^{(t+1)}$'s. Hence, $\ho$ is a $\big\{\ho^{(t+1)}\big\}$-line Hoffman graph.
\end{proof}

Hoffman and Ostrowski showed the following result. For a proof, see \cite[Theorem 2.14]{HJAT}.
\begin{theorem}\label{Ostrowski} Suppose $\ho$ is a Hoffman graph, and $f_1, \dots, f_r \in V_f(\ho)$. Let $\go^{n_1, \dots, n_r}$ be the Hoffman graph obtained from $\ho$ by replacing each $f_i$ by a slim $n_i$-clique $K^i$, and joining, for each $i$, every neighbor of $f_i$ with all vertices in $K^i$. Then
\begin{center}$\lambda_{\min}(\go^{n_1,\dots, n_r})\geq \lambda_{\min}(\ho)$,\end{center}
and
\begin{displaymath}\lim_{n_1,\dots,n_r \rightarrow \infty}\lambda_{\min}(\go^{n_1,\dots, n_r})= \lambda_{\min}(\ho).\end{displaymath}

\end{theorem}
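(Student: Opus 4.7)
The plan is to relate the special matrices $S(\ho)$ and $S(\go^{n_1,\dots,n_r})$ block-by-block. Write the slim-to-fat incidence matrix of $\ho$ as $C=[C_0\mid C_1\mid\cdots\mid C_r]$, where $C_i$ ($i\geq 1$) is the column indicator of $N_{\ho}^{\rm slim}(f_i)$ and $C_0$ groups the remaining fat vertices, so that $S(\ho)=A_{\rm slim}-C_0C_0^T-\sum_{i=1}^r C_iC_i^T$. By construction each clique $K^i$ contributes a slim block $J_{n_i}-I_{n_i}$, edges between $V_{\rm slim}(\ho)$ and $K^i$ form the rank-one block $C_i\mathbf{1}_{n_i}^T$, and the fat vertices $f_i$ disappear from the fat-incidence matrix; hence
\[
S(\go^{n_1,\dots,n_r})=\begin{pmatrix} A_{\rm slim}-C_0C_0^T & C_1\mathbf{1}^T & \cdots & C_r\mathbf{1}^T \\ \mathbf{1}C_1^T & J_{n_1}-I_{n_1} & & 0 \\ \vdots & & \ddots & \\ \mathbf{1}C_r^T & 0 & & J_{n_r}-I_{n_r} \end{pmatrix}.
\]

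For the inequality, I may assume $r\geq 1$ (otherwise $\go^{n_1,\dots,n_r}=\ho$); then some $f_i$ has a slim neighbor $x$ with $S(\ho)_{xx}\leq -1$, so $\lambda_{\min}(\ho)\leq -1$. Suppose for contradiction $\mu:=\lambda_{\min}(\go^{n_1,\dots,n_r})<\lambda_{\min}(\ho)\leq -1$ and pick an eigenvector $(v_0,v_1,\dots,v_r)$ at $\mu$. The $i$-th block row reads $(J_{n_i}-I_{n_i})v_i+(C_i^Tv_0)\mathbf{1}=\mu v_i$; since $\mu<-1$ is not an eigenvalue of $J_{n_i}-I_{n_i}$, this equation has the unique solution $v_i=-\frac{C_i^Tv_0}{n_i-1-\mu}\mathbf{1}$. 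In particular $v_0\neq 0$, since otherwise all $v_i$ vanish. Substituting back into the top block yields $M(\mu)v_0=\mu v_0$ with
\[
M(\mu):=A_{\rm slim}-C_0C_0^T-\sum_{i=1}^r\frac{n_i}{n_i-1-\mu}C_iC_i^T,
\]
and a direct comparison gives
\[
M(\mu)-S(\ho)=\sum_{i=1}^r\frac{-1-\mu}{n_i-1-\mu}C_iC_i^T,
\]
which is positive semidefinite for $\mu<-1$. Therefore $\mu=v_0^TM(\mu)v_0/\|v_0\|^2\geq v_0^TS(\ho)v_0/\|v_0\|^2\geq\lambda_{\min}(\ho)$, contradicting $\mu<\lambda_{\min}(\ho)$.

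For the limit, I exhibit test vectors whose Rayleigh quotients on $S(\go^{n_1,\dots,n_r})$ converge to $\lambda_{\min}(\ho)$. Let $v$ be a unit $\lambda_{\min}(\ho)$-eigenvector of $S(\ho)$, and set $\tilde v_0:=v$ and $\tilde v_i:=-\frac{C_i^Tv}{n_i}\mathbf{1}_{n_i}$. A routine calculation gives
\[
\tilde v^TS(\go^{n_1,\dots,n_r})\tilde v=v^T(A_{\rm slim}-C_0C_0^T)v-\sum_i\Bigl(1+\frac{1}{n_i}\Bigr)(C_i^Tv)^2,\qquad \|\tilde v\|^2=1+\sum_i\frac{(C_i^Tv)^2}{n_i},
\]
so as all $n_i\to\infty$ the ratio tends to $v^TS(\ho)v=\lambda_{\min}(\ho)$. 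Hence $\limsup\lambda_{\min}(\go^{n_1,\dots,n_r})\leq\lambda_{\min}(\ho)$, which combined with the inequality yields the claimed limit.

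The only subtle point is the forbidden value $\mu=-1$, where $(J_{n_i}-(1+\mu)I_{n_i})$ would be singular; this is neatly dodged by the structural observation $\lambda_{\min}(\ho)\leq -1$, which forces any hypothetical counterexample $\mu<\lambda_{\min}(\ho)$ to satisfy $\mu<-1$ strictly, so the resolvent step is always legitimate.
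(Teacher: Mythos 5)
Your proof is correct: the identification of $S(\go^{n_1,\dots,n_r})$ (the clique blocks $J_{n_i}-I_{n_i}$, the rank-one coupling blocks $C_i\mathbf{1}^T$, and the disappearance of $f_1,\dots,f_r$ from the fat part) is right, the elimination of the clique blocks via the resolvent is legitimate because the contradiction hypothesis forces $\mu<-1$, the sign analysis showing $M(\mu)-S(\ho)$ is positive semidefinite is correct, and the test vectors $\tilde v_i=-\tfrac{C_i^Tv}{n_i}\mathbf{1}$ give exactly the Rayleigh quotient $\bigl(\lambda_{\min}(\ho)-s\bigr)/(1+s)$ with $s=\sum_i (C_i^Tv)^2/n_i\to 0$, which settles the limit. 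Note that this paper does not prove the statement at all: it quotes it as a result of Hoffman and Ostrowski and refers to \cite[Theorem 2.14]{HJAT} for a proof, so there is no in-paper argument to compare against; your write-up is a self-contained proof in the spirit of the classical Hoffman--Ostrowski/Rayleigh-quotient treatment, and as a bonus it makes the lower bound $\lambda_{\min}(\go^{n_1,\dots,n_r})\geq\lambda_{\min}(\ho)$ completely explicit rather than appealing to monotonicity or limiting arguments from the literature.
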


\section{A structural theorem}
In this section, we give a structural theorem for $t$-fat Hoffman graphs with smallest eigenvalue at least $-t-1$. In Subsection \ref{Forbiddenmatrice}, we consider a set of matrices whose smallest eigenvalue is at most $-t-\sqrt{2}$. In Subsection \ref{familyofHoffmangraphs}, we introduce a finite family $\Go(t)$ of $t$-fat Hoffman graphs. In Corollary \ref{line1}, we will show that the $t$-fat Hoffman graphs with smallest eigenvalue greater than $-t-\sqrt{2}$ are $\Go(t)$-line Hoffman graphs.
\subsection{Forbidden matrices}\label{Forbiddenmatrice}

Two matrices $B_1$ and $B_2$ are \emph{equivalent} if there exists a permutation matrix $P$ such that $P^T B_1 P = B_2$. A symmetric matrix is said to be \emph{reducible} if and only if it is equivalent to a block diagonal matrix with at least two blocks. If a symmetric matrix is not reducible, it is said to be \emph{irreducible}.

\begin{definition}
Let $t$ and $a$ be two integers where $t>0$. We define the matrices $m_{1,a}(t)$, $m_{2,a}(t)$, $m_{3,a}(t)$, $m_{4,a}(t)$, $m_{5}(t)$, $m_{6}(t)$, $m_{7}(t)$, $m_{8}(t)$, $m_{9}(t)$ as follows:
\begin{enumerate}

\item $m_{1,a}(t)= \begin{pmatrix}
-t+a
\end{pmatrix}$;

\item $m_{2,a}(t)=\begin{pmatrix}
-t & a \\
a & -t
\end{pmatrix}$;

\item $m_{3,a}(t)=\begin{pmatrix}
-t-1 & a \\
a & -t
\end{pmatrix}$;

\item $m_{4,a}(t)=\begin{pmatrix}
-t-1 & a \\
a & -t-1
\end{pmatrix}$;

\item \begin{equation*}
\begin{array}[c]{ccc}

  m_{5}(t)=\begin{pmatrix}-t & -1 & -1\\-1 & -t & -1 \\-1 & -1 & -t\end{pmatrix},&
  m_{6}(t)=\begin{pmatrix}-t & 1 & 1\\1 & -t & -1 \\1 & -1 & -t\end{pmatrix},\\

  \\

  m_{7}(t)=\begin{pmatrix}-t & 0 & 1\\0 & -t & -1 \\1 & -1 & -t\end{pmatrix},&
  m_{8}(t)=\begin{pmatrix}-t & 0 & 1\\0 & -t & 1 \\1 & 1 & -t\end{pmatrix}, \\

  \\

  m_{9}(t)=\begin{pmatrix}-t & 0 & -1\\0 & -t & -1 \\-1 & -1 & -t\end{pmatrix}.\\
\end{array}
\end{equation*}
\end{enumerate}
\end{definition}

Note that the symmetric matrices $m_{2,0}(t),m_{3,0}(t)$ and $m_{4,0}(t)$ are reducible.

\begin{definition}\label{F(t)} Let $t$ be a positive integer. We define the set $\mathcal{M}(t)$ of irreducible symmetric matrices as the union of the sets $M_1(t),M_2(t)$, $M_3(t)$, $M_4(t)$ and $M_5(t)$, where
\begin{equation*}
\begin{split}
       M_1(t)& =\{m_{1,a}(t) \mid a=-2,-3,\ldots\},    \\
       M_2(t)& =\{m_{2,a}(t) \mid a=-2,-3,\ldots,-t\}, \\
       M_3(t)& =\{m_{3,a}(t) \mid a=1,-1,-2,\ldots, -t\}, \\
       M_4(t)& =\{m_{4,a}(t) \mid a=1,-1,-2,\ldots, -t-1\}, \\
       M_5(t)& =\{m_{5}(t),m_{6}(t),m_{7}(t),m_{8}(t),m_{9}(t)\}.
  \end{split}
\end{equation*}
\end{definition}

%Since the matrices $m_{3,0}(t)$ and $m_{4,0}(t)$ are not irreducible, we exclude $a=0$ from $M_3(t)$ and $M_4(t)$.

Note that $$\lambda_{\min}(m_{1,a}(t))=-t+a,~\lambda_{\min}(m_{2,a}(t))=-t-|a|, $$
$$\lambda_{\min}(m_{3,a}(t))=-t-\frac{1+\sqrt{1+a^2}}{2},$$
$$\lambda_{\min}(m_{4,a}(t))=-t-1-|a|,~\lambda_{\min}(m_{5}(t))=\lambda_{\min}(m_{6}(t))=-t-2,$$
$$\lambda_{\min}(m_{7}(t))=\lambda_{\min}(m_{8}(t))=\lambda_{\min}(m_{9}(t))=-t-\sqrt{2}.$$
It means that all matrices in $\mathcal{M}(t)$ have smallest eigenvalue at most $-t-\sqrt{2}$. Hence, we have the following result by Lemma \ref{matrixinterlace} (i).

\begin{proposition}\label{forbidden}
Let $t$ be a positive integer and $\ho$ be an indecomposable $t$-fat Hoffman graph. If the special matrix $S(\ho)$ contains a principal submatrix P with $\lambda_{\min}(P)\leq -t-\sqrt{2}$, then $\lambda_{\min}(\ho)\leq -t-\sqrt{2}$.
In particular, if $S(\ho)$ contains a principal submatrix equivalent to one of the matrices in $\mathcal{M}(t)$, then $\lambda_{\min}(\ho)\leq -t-\sqrt{2}$.
\end{proposition}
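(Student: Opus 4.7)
The plan is to deduce the first assertion directly from the matrix interlacing property of Lemma \ref{matrixinterlace}(i), and then derive the ``in particular'' clause from the eigenvalue computations displayed just before the proposition.

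First, I would handle the general claim. Since the eigenvalues of $\ho$ are by definition those of its special matrix $S(\ho)$, and $P$ is a principal submatrix of the real symmetric matrix $S(\ho)$, Lemma \ref{matrixinterlace}(i) says that the eigenvalues of $P$ interlace those of $S(\ho)$. In particular the smallest eigenvalue of $P$ is at least the smallest eigenvalue of $S(\ho)$, i.e. $\lambda_{\min}(\ho)\le\lambda_{\min}(P)$. The hypothesis $\lambda_{\min}(P)\le -t-\sqrt{2}$ then forces $\lambda_{\min}(\ho)\le -t-\sqrt{2}$, which is the first assertion.

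For the ``in particular'' assertion, I would note that equivalence, as defined in Subsection \ref{Forbiddenmatrice}, means conjugation by a permutation matrix, which preserves the spectrum; hence a principal submatrix of $S(\ho)$ that is equivalent to some $B\in\mathcal{M}(t)$ has the same smallest eigenvalue as $B$. The list of smallest eigenvalues displayed immediately before the statement---namely $-t+a$ for $m_{1,a}(t)$, $-t-|a|$ for $m_{2,a}(t)$, $-t-\tfrac{1+\sqrt{1+a^2}}{2}$ for $m_{3,a}(t)$, $-t-1-|a|$ for $m_{4,a}(t)$, $-t-2$ for $m_5(t),m_6(t)$, and $-t-\sqrt{2}$ for $m_7(t),m_8(t),m_9(t)$---together with the ranges of the parameter $a$ fixed in Definition \ref{F(t)} (e.g. $a\le -2$ in $M_1(t)$, $|a|\ge 2$ in $M_2(t)$, $|a|\ge 1$ in $M_3(t)$, $|a|\ge 1$ in $M_4(t)$) show that every matrix in $\mathcal{M}(t)$ has smallest eigenvalue at most $-t-\sqrt{2}$. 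Invoking the first part of the proposition then concludes the argument.

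There is no real obstacle here: the only mildly non-trivial step is verifying the nine eigenvalue formulas, but these reduce to quadratics for the $2\times 2$ matrices and to diagonalizing a handful of $3\times 3$ symmetric matrices (each is either a small perturbation of $J-(t+1)I$ or has easily-computed eigenvectors by inspection of its sign pattern). This is routine bookkeeping, which is presumably why the author simply displayed the formulas without derivation just before the statement. I would also remark that the hypotheses that $\ho$ be $t$-fat and indecomposable play no role in this argument; they are included because the proposition is to be used in that restricted setting later.
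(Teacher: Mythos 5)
Your argument is the paper's argument: the first assertion is exactly Lemma \ref{matrixinterlace}(i) applied to the principal submatrix $P$ of $S(\ho)$ (the fatness and indecomposability hypotheses indeed play no role here), and the ``in particular'' clause follows from the list of smallest eigenvalues of the matrices in $\mathcal{M}(t)$, which is precisely how the paper derives the proposition.

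One point of bookkeeping deserves correction, though. For $m_{3,a}(t)=\left(\begin{smallmatrix}-t-1 & a\\ a & -t\end{smallmatrix}\right)$ the smallest eigenvalue is $-t-\frac{1+\sqrt{1+4a^2}}{2}$, not $-t-\frac{1+\sqrt{1+a^2}}{2}$ as displayed in the paper (and quoted by you); the displayed formula is a typo. This matters because $M_3(t)$ contains $m_{3,\pm 1}(t)$, and with the formula as quoted one would get $-t-\frac{1+\sqrt{2}}{2}\approx -t-1.21$, which is \emph{not} at most $-t-\sqrt{2}$, so your claim that the displayed list ``together with the ranges of $a$'' suffices would fail in exactly that case. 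With the corrected value one gets $-t-\frac{1+\sqrt{5}}{2}<-t-\sqrt{2}$ for $|a|\geq 1$, and the verification goes through for all matrices in $\mathcal{M}(t)$; since you declared the nine eigenvalue computations routine and deferred to the display, you should actually carry out (or at least correct) this one.
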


\subsection{A family of Hoffman graphs}\label{familyofHoffmangraphs}
In this subsection, we introduce the family $\Go(t)$ of $t$-fat Hoffman graphs.

\begin{theorem}\label{class}
Let $t$ be a positive integer. Suppose that $\ho$ is an indecomposable $t$-fat Hoffman graph such that none of the principal submatrices of $S(\ho)$ is equivalent to some element of $\mathcal{M}(t)$. Then $\lambda_{\min}(\ho)\geq -t-1$ and  $S(\ho)$ is one of the following:

\begin{enumerate}

\item $(-t)$, with smallest eigenvalue $-t$;

\item $(-t-1)$, with smallest eigenvalue $-t-1$;

\item $J-(t+1)I$, with smallest eigenvalue $-t-1$;

\item $\begin{pmatrix}
J_{r_1}-(t+1)I_{r_1} & -J_{r_1\times r_2} \\
-J_{r_2\times r_1}& J_{r_2}-(t+1)I_{r_2}
\end{pmatrix}$ for some positive integers $r_1$ and $r_2$, with smallest eigenvalue $-t-1$.

\end{enumerate}

Moreover, if $S(\ho)=\begin{pmatrix}
J_{r_1}-(t+1)I_{r_1} & -J_{r_1\times r_2} \\
-J_{r_2\times r_1}& J_{r_2}-(t+1)I_{r_2}
\end{pmatrix}$, then both $r_1$ and $r_2$ are at most $t$. In particular, $|V_{\rm slim}(\ho)|\leq 2t$.

\end{theorem}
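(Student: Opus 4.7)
The plan is to constrain the entries of $S(\ho)$ in three stages — diagonal, $2\times 2$, $3\times 3$ — using the assumption that no principal submatrix lies in $\mathcal{M}(t)$, together with $t$-fatness, and then to use indecomposability of $\ho$ to pin down the global shape. For the diagonal, $t$-fatness gives $(CC^T)_{xx} \geq t$ and hence $S_{xx} \leq -t$, while $M_1(t)$ forbids $S_{xx} \leq -t-2$, so $S_{xx} \in \{-t, -t-1\}$ for every slim $x$. Combined with Lemma~\ref{special}, the families $M_2(t), M_3(t), M_4(t)$ then leave exactly these off-diagonal values: $S_{xy} \in \{-1, 0, 1\}$ when $S_{xx} = S_{yy} = -t$; $S_{xy} \in \{0, -t-1\}$ when $S_{xx} = -t-1$ and $S_{yy} = -t$; and $S_{xy} = 0$ when $S_{xx} = S_{yy} = -t-1$. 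In the mixed case, $S_{xy} = -t-1$ would demand $|N^{\rm fat}(x) \cap N^{\rm fat}(y)| \geq t+1$, impossible since $y$ has only $t$ fat neighbors; so $S_{xy} = 0$. Consequently, if some slim $x$ has $S_{xx} = -t-1$ then its entire row of $S(\ho)$ vanishes off the diagonal, and irreducibility of $S(\ho)$ forces $|V_{\rm slim}(\ho)| = 1$, yielding case (ii).

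Assume now that all diagonal entries equal $-t$. I would use the five matrices of $M_5(t)$ to pin down the global structure. Declaring $x \equiv y$ when $x = y$ or $S_{xy} = 1$, the patterns $m_6 = \{1,1,-1\}$ and $m_8 = \{1,1,0\}$ force transitivity of $\equiv$, so it is an equivalence relation whose classes $C_1, \ldots, C_r$ are $+$-cliques. Between two distinct classes $C_i, C_j$, the matrices $m_6, m_7 = \{0,1,-1\}, m_8$ force the off-diagonal block to be uniformly $0$ or uniformly $-1$. Finally, $m_5 = \{-1,-1,-1\}$ and $m_9 = \{0,-1,-1\}$ (together with the fact that entries between distinct classes cannot be $1$) rule out two distinct classes both being $-1$-joined to a common third class; hence the graph $\Gamma$ on classes whose edges record the $-1$-blocks has no vertex of degree $\geq 2$, so $\Gamma$ is a disjoint union of isolated vertices and single edges. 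Irreducibility of $S(\ho)$ makes $\Gamma$ connected, so either $\Gamma$ is a single vertex (giving case (iii), or case (i) when $|C_1| = 1$) or $\Gamma$ is a single edge (giving case (iv)). A direct eigenvalue computation on each shape then yields $\lambda_{\min}(\ho) \geq -t-1$.

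The remaining task is the bound $r_1, r_2 \leq t$ in case (iv). Reading the fat structure off $S(\ho)$: within each class, $S_{xy} = 1$ forces $A_{\rm slim, xy} = 1$ and $(CC^T)_{xy} = 0$, so any two vertices of one class are slim-adjacent and share no fat neighbor; between the classes, $S_{xy} = -1$ forces $(CC^T)_{xy} \geq 1$, so every $x \in C_1$ and $y \in C_2$ share at least one fat neighbor. Fix $x \in C_1$ with fat neighbors $f_1, \ldots, f_t$; every $y \in C_2$ is adjacent to some $f_i$, and no $f_i$ can be adjacent to two distinct vertices of $C_2$ (that would make those two share $f_i$), so $r_2 \leq t$. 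By symmetry $r_1 \leq t$, hence $|V_{\rm slim}(\ho)| \leq 2t$. I expect the main obstacle to be the $3 \times 3$ case analysis: verifying that the five matrices of $M_5(t)$, together with the $\{-1, 0, 1\}$ off-diagonal restriction and the equivalence-class structure just described, are indeed sufficient to force the clean clique-and-matching picture without admitting stray configurations, and checking that the mixed-diagonal fat-neighbor estimate is truly tight.
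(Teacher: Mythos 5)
Your proposal is correct and follows essentially the same route as the paper's proof: constrain the diagonal via $t$-fatness and $M_1(t)$, the off-diagonal entries via Lemma~\ref{special} and $M_2(t),M_3(t),M_4(t)$, use the matrices of $M_5(t)$ to make the relation ``$S_{xy}=1$'' an equivalence relation with at most two classes (after indecomposability removes zero entries), and bound $r_1,r_2\le t$ by the same fat-neighbour counting, since $S_{xx}=-t$ means exactly $t$ fat neighbours. The only difference is organizational: you keep $S_{xy}=0$ alive and dispose of it through the class graph $\Gamma$ being a matching made connected by irreducibility, whereas the paper first uses indecomposability with $m_7,m_8,m_9$ to force all off-diagonal entries nonzero; both are valid and equivalent in substance.
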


\begin{proof}

Suppose that there exists a vertex $x \in V_{\rm slim}(\ho)$ such that $S(\ho)_{xx} \leq -t-1$. Since $\ho$ is indecomposable, we find that, by Lemma \ref{special} (iv) and Proposition \ref{forbidden}, $S(\ho)$ does not contain $m_{1,a}(t),~m_{3,b}(t)$ or $m_{4,b}(t)$ as a principal submatrix for $a=-2,-3,\ldots$ and $b=1,-1,-2,\ldots$. Therefore, we find that $S(\ho) = (-t-1)$ and hence $\ho=\ho^{(t+1)}$ with smallest eigenvalue $-t-1$.

Now, we may assume that $S(\ho)_{xx} = -t$ for all $x\in V_s(\ho)$. For distinct vertices $x$ and $y$, it is easy to see that  $S(\ho)_{xy} \in \{0,1,-1\}$ since $m_{2,a}(t)$ is not a principal submatrix of $S(\ho)$ for $a=-2,-3,\ldots$. Considering that $\ho$ is indecomposable and $S(\ho)$ does not contain any principal submatrix equivalent to $m_{7}(t),m_{8}(t)$ or $m_{9}(t)$, we obtain that $S(\ho)_{xy} \neq 0$. It follows that $S(\ho)_{xy} \in \{1,-1\}$.

We define a relation $\mathcal{R}$ on $V_{\rm slim}(\ho)$ by $x \mathcal{R} y$ if $S(\ho)_{xy} = 1$ or $x=y$ holds.

\vspace{0.2cm}

\noindent{\bf(Claim I)} The relation $\mathcal{R}$ on $V_{\rm slim}(\ho)$ is an equivalence relation.

\noindent\emph{(Proof of Claim I)} Clearly, $\mathcal{R}$ is reflexive and symmetric. For transitivity, suppose that $x\mathcal{R} y$ and $y\mathcal{R} z$ both hold. If $S(\ho)_{xz} = -1$, $S(\ho)$ contains a principal submatrix which is equivalent to $m_{6}(t)$. This gives a contradiction. Hence, $S(\ho)_{xz} = 1$ and $x \mathcal{R}z$ holds.

\vspace{0.1cm}

\noindent{\bf(Claim II)} The number of equivalence classes under $\mathcal{R}$ is at most $2$. If $V_s(\ho)$ has two equivalence classes, then each equivalence class has size at most $t$.

\noindent\emph{(Proof of Claim II)} Suppose that there are three equivalence classes. Take three vertices, one vertex from each class. Then, the principal submatrix indexed by these three vertices is the matrix $m_{5}(t)$, which is a contradiction. Hence, there are at most 2 equivalence classes under $\mathcal{R}$.

Suppose that $V_{\rm slim}(\ho)$ has two equivalence classes $C_1$ and $C_2$. Let $C_1=\{x_1, x_2,\dots,x_{r_1}\}$. Then for any vertex $y\in C_2$, the vertices $x_i$ and $y$ have at least one common fat neighbor. Note that $x_i$ and $x_j$ have no common fat neighbor if $i\neq j$. It follows that the size of $C_1$  is at most the number of fat neighbors of $y$, that is, $ r_1\leq t$. Similarly, we find that $r_2=|C_2|\leq t$.

\vspace{0.2cm}

If $V_{\rm slim}(\ho)$ has only one equivalence class under $\mathcal{R}$, then $S(\ho)=J-(t+1)I$ and we are in the case (i) or (iii) and $\lambda_{\min}(\ho)\geq -t-1$. If $V_{\rm slim}(\ho)$ has two equivalence classes under $\mathcal{R}$, then $S(\ho)$ is of the form

\[
S(\ho)=
\begin{pmatrix}
J_{r_1}-(t+1)I_{r_1} & -J_{r_1\times r_2} \\
-J_{r_2\times r_1}& J_{r_2}-(t+1)I_{r_2}
\end{pmatrix}
\]
in a labeling with respect to the two equivalence classes and $\lambda_{\min}(\ho)=-t-1$. In this case, each class has at most $t$ elements.
\end{proof}

If an indecomposable $t$-fat Hoffman graph $\ho$ satisfies the assumption of Theorem \ref{class}, then $\ho$ has smallest eigenvalue at least $-t-1$. Conversely, for an indecomposable  Hoffman graph $\ho$ with smallest eigenvalue at least $-t-1$, $S(\ho)$ does not contain a principal submatrix which is equivalent to an element of $\mathcal{M}(t)$. Hence, we obtain the corollary below.

\begin{corollary}\label{-t-1} Let $\ho$ be a $t$-fat Hoffman graph. Then the special matrix $S(\ho)$ does not contain any principal submatrix which is equivalent to an element of $\mathcal{M}(t)$ if and only if $\ho$ has smallest eigenvalue at least $-t-1$.
\end{corollary}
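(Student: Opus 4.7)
The plan is to prove the two implications separately, using Theorem \ref{class} for one direction and interlacing (Lemma \ref{matrixinterlace}(i)) for the other; no new ideas beyond what has already been set up are required.

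For the ``only if'' direction I would argue by contrapositive. Suppose $S(\ho)$ admits a principal submatrix $P$ equivalent to some element $m$ of $\mathcal{M}(t)$. The explicit smallest-eigenvalue computations listed just before Proposition \ref{forbidden} show that every matrix in $\mathcal{M}(t)$ has smallest eigenvalue at most $-t-\sqrt{2}$. Since equivalent matrices share the same spectrum, $\lambda_{\min}(P) \leq -t-\sqrt{2} < -t-1$. Applying Lemma \ref{matrixinterlace}(i) to $P$ as a principal submatrix of $S(\ho)$ gives $\lambda_{\min}(\ho) \leq \lambda_{\min}(P) < -t-1$, which is the desired contrapositive. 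Note that, unlike Proposition \ref{forbidden}, we do not need $\ho$ to be indecomposable here, because the interlacing inequality is completely general.

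For the ``if'' direction the plan is to reduce to the indecomposable case, where Theorem \ref{class} applies directly. Starting from $\ho$, I would iteratively apply Definition \ref{directsummatrix}: whenever a piece is decomposable, its special matrix is block-diagonal with at least two blocks, so splitting repeatedly terminates in indecomposable components $\ho^1,\ldots,\ho^r$ with
\[
S(\ho)=\bigoplus_{i=1}^{r}S(\ho^i).
\]
Each $\ho^i$ remains $t$-fat, because by condition (iii) of Lemma \ref{combi} every slim vertex of $\ho$ keeps all of its fat neighbors inside the component it belongs to. Moreover, every principal submatrix of $S(\ho^i)$ is automatically a principal submatrix of $S(\ho)$, so the hypothesis that $S(\ho)$ avoids matrices equivalent to elements of $\mathcal{M}(t)$ is inherited by each $S(\ho^i)$. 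Theorem \ref{class} then yields $\lambda_{\min}(\ho^i)\geq -t-1$ for every $i$, and the block-diagonal form above gives $\lambda_{\min}(\ho)=\min_{i}\lambda_{\min}(\ho^i)\geq -t-1$.

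There is no real obstacle: the corollary simply packages Theorem \ref{class} together with interlacing. The only points requiring attention are the two inheritance statements in the reduction step — that $t$-fatness and the absence of forbidden principal submatrices both pass to the indecomposable components — and these are immediate from the definitions of decomposition and principal submatrix.
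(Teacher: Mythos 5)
Your proposal is correct and takes essentially the same route as the paper: one implication by interlacing against the bound $-t-\sqrt{2}$ satisfied by every matrix in $\mathcal{M}(t)$ (where, as you rightly note, indecomposability is not needed), and the other by decomposing $\ho$ into indecomposable components, checking that $t$-fatness and the absence of forbidden principal submatrices pass to each block of the block-diagonal special matrix, and applying Theorem \ref{class} -- a reduction the paper leaves implicit. The only slip is cosmetic: your labels are swapped, since what you call the ``only if'' direction is the ``if'' direction of the corollary and vice versa, but both implications are fully established.
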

%\begin{proof}
%Let $\ho = \bigoplus_{i=1}^r \ho_i$, where $\ho_i$ is an indecomposable induced Hoffman subgraph of $\ho$ for all $i$. Then from Definition \ref{directsummatrix}, it is easy to find that $\lambda_{\min}(\ho)=\min\{\lambda_{\min}(\ho_i)\mid i=1,\ldots,r\}$.

%So $S(\ho)$ does not contain a principal submatrix which is equivalent to an element of $\mathcal{M}(t)$, if and only if $S(\ho_i)$ does not contain a principal submatrix which is equivalent to an element of $\mathcal{M}(t)$ for all $i$, if and only if $\lambda_{\min}(\ho_i)\geq -t-1$ for all $i$, if and only if $\lambda_{\min}(\ho)\geq -t-1$.
%\end{proof}

\begin{definition}\label{G(t)}
Let $t$ be a positive integer. We define $\Go(t)$ to be the family of pairwise non-isomorphic indecomposable $t$-fat Hoffman graphs whose special matrix is either $(-t-1)$ or $\begin{pmatrix}
J_{r_1}-(t+1)I_{r_1} & -J_{r_1\times r_2} \\
-J_{r_2\times r_1} & J_{r_2}-(t+1)I_{r_2}
\end{pmatrix}$, where $1\le r_1,r_2\le t$.
\end{definition}

Note that $\Go(t)$ is a finite family of Hoffman graphs and the Hoffman graph $\ho^{(t+1)}$ with the special matrix $(-t-1)$ belongs to $\Go(t)$.

Theorem \ref{class} shows the following result, which relates $\mathcal{M}(t)$ to $\Go(t)$.

\begin{theorem}\label{line} Let $t$ be a positive integer and $\ho$ be a $t$-fat Hoffman graph such that its special matrix $S(\ho)$ does not contain any principal submatrix equivalent to an element of $\mathcal{M}(t)$. Then $\ho$ is a $t$-fat $\Go(t)$-line Hoffman graph.

\end{theorem}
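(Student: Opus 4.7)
The plan is to decompose $\ho$ into indecomposable components, apply the classification of Theorem \ref{class} to each, and then enlarge $\ho$ by adjoining fat vertices so that the resulting Hoffman graph decomposes into pieces that are (induced Hoffman subgraphs of) members of $\Go(t)$.

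First I would use the block diagonal structure of $S(\ho)$ (after a suitable permutation of $V_{\rm slim}(\ho)$) to write $\ho = \uplus_{i=1}^r \ho_i$, where each $\ho_i = \langle V_{\rm slim}^i(\ho)\rangle_{\ho}$ is an indecomposable $t$-fat Hoffman graph. Each $\ho_i$ inherits the hypothesis of Theorem \ref{class}, because any principal submatrix of $S(\ho_i)$ is also a principal submatrix of $S(\ho)$. Thus the classification of Theorem \ref{class} applies to every $\ho_i$.

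Next I would process each $\ho_i$ according to the four forms of $S(\ho_i)$ supplied by Theorem \ref{class}. When $S(\ho_i) = (-t-1)$, the component is $\ho^{(t+1)} \in \Go(t)$, and when $S(\ho_i)$ has the two-class block form it is, up to isomorphism, one of the pairwise non-isomorphic Hoffman graphs enumerated by $\Go(t)$; in both cases I set $\ho_i' = \ho_i$. When $S(\ho_i) = (-t)$, I adjoin one fresh fat vertex to the unique slim vertex to obtain $\ho_i' \cong \ho^{(t+1)} \in \Go(t)$. When $S(\ho_i) = J_n - (t+1)I_n$, Lemma \ref{t-cherry} yields an enlargement $\ho_i'$, obtained by adjoining one fresh fat vertex adjacent to every slim vertex of $\ho_i$, which decomposes further into $n$ copies of $\ho^{(t+1)}$. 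I would then let $\ho'$ be the Hoffman graph obtained from $\ho$ by performing all of these adjunctions simultaneously, with fresh fat vertices attached only within a single original component. Then $\ho$ sits inside $\ho'$ as an induced Hoffman subgraph, the slim graph is preserved, and the collection $\{\ho_i'\}_{i=1}^r$ (with the $J_n-(t+1)I_n$ components further refined into copies of $\ho^{(t+1)}$) provides a decomposition of $\ho'$ into pieces each isomorphic to an induced Hoffman subgraph of an element of $\Go(t)$, verifying the definition of a $\Go(t)$-line Hoffman graph.

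The step requiring the most attention, rather than any computation, is confirming that $\ho'$ together with its refined pieces satisfies the decomposition criterion of Lemma \ref{combi}. Because each newly adjoined fat vertex is attached only to slim vertices of a single original component, conditions (iii) and (iv) of Lemma \ref{combi} remain intact across the components $\ho_i'$. The only subtle case is the further refinement inside a $J_n-(t+1)I_n$ component, where condition (iv) is satisfied precisely because every pair of slim vertices in $K_n$ is adjacent, so any two slim vertices placed in different refined blocks share exactly the common new fat vertex, as required. With this verification in hand, the theorem follows directly from Theorem \ref{class} and Lemma \ref{t-cherry}.
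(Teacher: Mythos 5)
Your proposal is correct and follows essentially the same route as the paper: decompose $\ho$ into indecomposable components, classify their special matrices via Theorem \ref{class}, and use Lemma \ref{t-cherry} (adjoining one fat vertex) for the $J_n-(t+1)I_n$ case, with $\ho^{(t)}$ absorbed as an induced Hoffman subgraph of $\ho^{(t+1)}\in\Go(t)$. Your explicit check of the gluing conditions of Lemma \ref{combi} for the global $\ho'$ is a detail the paper leaves implicit behind its ``without loss of generality'' reduction to a single component, but it is the same argument.
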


\begin{proof} Let $\{\ho_i\}_{i=1}^r$ be a decomposition of $\ho$, where $\ho_i$ is an indecomposable induced Hoffman subgraph of $\ho$ for all $i$. Note that $\ho = \uplus_{i=1}^r \ho_i$.
Without loss of generality, it suffices to show that $\ho_1$ is a $t$-fat $\Go(t)$-line Hoffman graph. Clearly, $\ho_1$ is $t$-fat and $S(\ho_1) $ does not contain a principal submatrix equivalent to an element of $\mathcal{M}(t)$. By Theorem \ref{class}, we have to consider the following three cases for $S(\ho_1)$:
\begin{itemize}
 \item[$(a)$]if $S(\ho_1)= (-t)$ or $(-t-1)$, then $\ho_1\in\big\{\ho^{(t)},~\ho^{(t+1)}\big\}$;

 \item[$(b)$]if $S(\ho_1)= J-(t+1)I$, then $\ho_1$ is a $\big\{\ho^{(t+1)}\big\}$-line Hoffman graph by Lemma \ref{t-cherry};

 \item[$(c)$]if $S(\ho_1)=\begin{pmatrix}
J_{r_1}-(t+1)I_{r_1} & -J_{r_1\times r_2} \\
-J_{r_2\times r_1}& J_{r_2}-(t+1)I_{r_2}
\end{pmatrix}$ for some positive integers $r_1$ and $r_2$, then $\ho_1 \in \Go(t)$.
\end{itemize}
Since $\ho^{(t+1)}\in\Go(t)$ and $\ho^{(t)}$ is an induced Hoffman subgraph of $\ho^{(t+1)}$, this finishes the proof.
\end{proof}

By Corollary \ref{-t-1} and Theorem \ref{line}, we have
\begin{corollary}\label{line1}
Let $t$ be a positive integer and $\ho$ be a $t$-fat Hoffman graph with smallest eigenvalue greater than $-t-\sqrt{2}$. Then $\ho$ is a $t$-fat $\Go(t)$-line Hoffman graph.
\end{corollary}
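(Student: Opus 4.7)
The plan is to chain together the two immediately preceding results: I would use Proposition \ref{forbidden} to convert the eigenvalue hypothesis into a forbidden-submatrix hypothesis, and then apply Theorem \ref{line} to get the line Hoffman graph conclusion. The only wrinkle is that Proposition \ref{forbidden} is stated for indecomposable Hoffman graphs, while the hypothesis here allows $\ho$ to be decomposable; this is dealt with by passing to a decomposition of $\ho$ and using that the matrices in $\mathcal{M}(t)$ are irreducible.

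More precisely, I would first write $\ho=\uplus_{i=1}^{r}\ho_i$ with each $\ho_i$ indecomposable and $t$-fat. Then $S(\ho)$ is block diagonal with blocks $S(\ho_i)$, so $\lambda_{\min}(\ho)=\min_{i}\lambda_{\min}(\ho_i)$, and in particular each $\ho_i$ has smallest eigenvalue greater than $-t-\sqrt{2}$. Because every matrix in $\mathcal{M}(t)$ has smallest eigenvalue at most $-t-\sqrt{2}$, as tabulated just after Definition \ref{F(t)}, the contrapositive of Proposition \ref{forbidden} (applied to each indecomposable $\ho_i$) shows that no $S(\ho_i)$ admits a principal submatrix equivalent to any element of $\mathcal{M}(t)$.

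Next, I would promote this to $\ho$ itself. Each matrix in $\mathcal{M}(t)$ is irreducible by construction in Definition \ref{F(t)}, so any principal submatrix of the block diagonal matrix $S(\ho)$ equivalent to such a matrix must be contained entirely within a single block $S(\ho_i)$; by the previous paragraph this cannot happen. Hence $S(\ho)$ itself has no principal submatrix equivalent to an element of $\mathcal{M}(t)$, and Theorem \ref{line} gives at once that $\ho$ is a $t$-fat $\Go(t)$-line Hoffman graph. Since all three moves (decomposition, interlacing/forbidden-submatrix, and Theorem \ref{line}) are already packaged in earlier results, there is no genuine obstacle; the main thing to be careful about is simply the decomposable-versus-indecomposable distinction and the irreducibility of the matrices in $\mathcal{M}(t)$.
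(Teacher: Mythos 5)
Your proof is correct and follows essentially the same route as the paper, which simply chains the forbidden-matrix machinery (Corollary \ref{-t-1}) with Theorem \ref{line}. The only difference is cosmetic: the detour through a decomposition into indecomposable parts and the irreducibility of the matrices in $\mathcal{M}(t)$ is not really needed, since the first assertion of Proposition \ref{forbidden} is just interlacing (Lemma \ref{matrixinterlace}(i)) applied to $S(\ho)$, and so it already shows, without any indecomposability hypothesis, that a principal submatrix of $S(\ho)$ equivalent to an element of $\mathcal{M}(t)$ would force $\lambda_{\min}(\ho)\leq -t-\sqrt{2}$.
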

%\begin{remark} Corollary \ref{line1} also holds for $t$-fat Hoffman graphs with smallest eigenvalue greater than $-t-\sqrt{2}$.
%\end{remark}
\section{Associated Hoffman graphs}

In this section, we first summarize some facts about associated Hoffman graphs and quasi-cliques as defined in \cite{HJJ}. We also will show that under some conditions, the associated Hoffman graph is a line Hoffman graph of a finite family of Hoffman graphs.

Let $m$ be a positive integer and let $G$ be a graph that does not contain $\widetilde{K}_{2m}$ as an induced subgraph, where $\widetilde{K}_{2m}$ is the graph on $2m+1$ vertices consisting of a complete graph $K_{2m}$ and a vertex $\infty$ which is adjacent to exactly $m$ vertices of the $K_{2m}$. Let $n\geq (m+1)^2$ be a positive integer. Let $\mathcal{C}(n)$ := $\{C\mid$ $C$ is a maximal clique of $G$ with at least $n$ vertices$\}$. Define the relation $\equiv_n^m$ on $\mathcal{C}(n)$ by $C_1\equiv_n^m C_2$ if each vertex $x\in C_1$ has at most $m-1$ non-neighbours in $C_2$ and each vertex $y\in C_2$ has at most $m-1$ non-neighbours in $C_1$. Then $\equiv_n^m$ is an equivalence relation as $n\geq (m+1)^2$ (see \cite[Lemma 3.1]{HJJ}).

Let $[C]_n^m$ denote the equivalence class of $\mathcal{C}(n)$ of $G$ under the equivalence relation $\equiv_n^m$ containing the maximal clique $C$ of $\mathcal{C}(n)$. We define the quasi-clique with respect to the pair $(m, n)$, $Q([C]_n^m)$, as the induced subgraph of $G$ on the set $\{x\in V(G)\mid$ $x$ has at most $m-1$ non-neighbours in $C\}$.

Let $[C_1]_n^m, \dots, [C_r]_n^m$ be equivalence classes of maximal cliques under $\equiv_n^m$. The \emph{associated Hoffman graph} $\go=\go(G,m,n)$ is the Hoffman graph satisfying the following conditions:
\begin{enumerate}
\item $V_{\rm slim}(\go) = V(G)$, $V_{\rm fat}(\go)=\{F_1,\dots,F_r\}$;
\item the slim graph of $\go$ equals $G$;
\item the fat vertex $F_i$ is adjacent precisely to all vertices in $Q([C_i]_n^m)$ for each $i$.
\end{enumerate}

Let $\ho$ be a Hoffman graph with $V_{\rm fat}(\ho)=\{F_1,\dots,F_r\}$ for some positive integer $r$.  The graph $G(\ho, n)$ is the slim graph of the Hoffman graph $\go^{n_1, \dots, n_r}$ (as defined in Theorem \ref{Ostrowski}) with $n_i=n$ for $i=1,\ldots,r$.
% The graph $G(\ho, n)$  has vertex set $V_s(\ho)\bigcup V_1 \bigcup \cdots \bigcup V_t$ satisfying that: $(1)$ the induced subgraph on $V_i$ is the complete graph $K_n$ for $i=1,\dots, t$; $(2)$ for any $x,y\in V_s(\ho)$, $x\sim y$ in $G(\ho, n)$ if and only if $x\sim y$ in $\ho$; $(3)$ for $x \in V_s(\ho)$, $x$ is adjacent to all the vertices in $V_i$ in $G(\ho, n)$ if $x\sim F_i$ in $\ho$.
%where $V_i$ is a set of size $n$ for $i=1,\dots, t$, such that each vertex $x\in V_i$ is adjacent to exactly all vertices in $V_s(\ho)$ that are neighbors of $F_i$ in $\ho$ and the induced subgraph on $V_i$ is the complete graph $K_n$ for $i=1,\dots, t$.

The following result is shown in \cite[Proposition 4.1]{HJJ}.

\begin{prop}\label{assohoff}
Let $G$ be a graph and let $m \geq 2, \phi \geq 1, \sigma \geq 1, p \geq 1$ be integers.
There exists a positive integer $n = n(m, \phi, \sigma,  p) \geq (m+1)^2$ such that for any integer $q \geq n$, and any Hoffman graph $\ho$ with at most $\phi$ fat vertices and at most $\sigma$ slim vertices, the graph  $G(\ho, p)$ is an induced subgraph of $G$, provided that the graph  $G$ satisfies the following conditions:
\begin{enumerate}
\item the  graph $G$ does not contain  $\widetilde{K}_{2m}$ as an induced subgraph,

\item its associated Hoffman graph $\go = \mathfrak{g}(G, m, q)$ contains $\ho$ as an induced Hoffman subgraph.

\end{enumerate}
\end{prop}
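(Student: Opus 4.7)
The plan is to construct $G(\ho,p)$ explicitly as an induced subgraph of $G$. Denote the slim vertices of $\ho$ by $v_1,\dots,v_s$ (where $s\le\sigma$) and the fat vertices by $F_1,\dots,F_t$ (where $t\le\phi$). Since $\ho$ is an induced Hoffman subgraph of $\go(G,m,q)$, each $v_i$ is already a vertex of $G$, and each $F_j$ corresponds to a distinct equivalence class $[C_j]_q^m$; I fix a representative maximal clique $C_j$ with $|C_j|\ge q$. My goal is to find, for each $j$, a $p$-clique $K^j\subseteq C_j$ so that the $K^j$ are pairwise disjoint and disjoint from $\{v_1,\dots,v_s\}$, every $v_i$ with $v_i\sim F_j$ in $\ho$ (equivalently $v_i\in Q([C_j]_q^m)$) is adjacent to all of $K^j$, every $v_i\not\sim F_j$ is adjacent to none of $K^j$, and no edges run between $K^i$ and $K^j$ for $i\ne j$.

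The basic tool is the dichotomy forced by the absence of $\widetilde{K}_{2m}$: for any $x\in V(G)$ and any clique $C\subseteq V(G)$ with $|C|\ge 2m$, either $x$ has at most $m-1$ non-neighbours in $C$ or $x$ has at most $m-1$ neighbours in $C$, since otherwise $m$ neighbours plus $m$ non-neighbours together with $x$ would induce a $\widetilde{K}_{2m}$. Applied to $v_i$ and $C_j$, this records the two adjacency types in $\ho$: $v_i$ has at most $m-1$ non-neighbours in $C_j$ when $v_i\sim F_j$, and at most $m-1$ neighbours when $v_i\not\sim F_j$.

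The main obstacle will be to control the interaction between different representative cliques $C_i,C_j$ with $i\ne j$, for which I claim $|C_j\cap Q([C_i]_q^m)|\le 2m-2$. Since $C_i\not\equiv_q^m C_j$, either some $x\in C_i$ has at least $m$ non-neighbours in $C_j$ or some $y\in C_j$ has at least $m$ non-neighbours in $C_i$; I will sketch the first case, the second being parallel (and yielding the even stronger bound $m-1$). By the dichotomy $x$ has at most $m-1$ neighbours in $C_j$, so if $A:=C_j\cap Q([C_i]_q^m)$ had size at least $2m-1$ I could pick $A_0\subseteq A\setminus N(x)$ with $|A_0|=m$. Each vertex of $A_0$ has at most $m-1$ non-neighbours in $C_i$, so at least $q-m(m-1)$ vertices of $C_i$ are adjacent to every vertex of $A_0$; selecting $m$ such vertices distinct from $x$ produces $X\subseteq C_i$ so that $A_0\cup X$ is a $K_{2m}$ in which $x$ is adjacent to exactly the $m$ vertices of $X$, yielding the forbidden $\widetilde{K}_{2m}$.

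With that bound established, I process $j=1,\dots,t$ in order and greedily pick $K^j\subseteq C_j$, avoiding in $C_j$ the union of the following forbidden sets: (i) for each $v_i\sim F_j$, the at most $m-1$ non-neighbours of $v_i$; (ii) for each $v_i\not\sim F_j$, the at most $m-1$ neighbours of $v_i$; (iii) the vertices $v_1,\dots,v_s$ themselves; (iv) for each $i\ne j$, the at most $2m-2$ vertices in $C_j\cap Q([C_i]_q^m)$ (which absorbs $C_j\cap C_i$ and thereby forces disjointness of the $K^j$'s from each other); and (v) for each $u'\in K^{i'}$ with $i'<j$, the at most $m-1$ neighbours of $u'$ in $C_j$, a bound that applies because (iv) at stage $i'$ guarantees $u'\notin Q([C_j]_q^m)$ so the dichotomy is available. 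The total size of this forbidden set is bounded by a polynomial $P(m,\phi,\sigma,p)$, so setting $n(m,\phi,\sigma,p):=\max\{(m+1)^2,\,P(m,\phi,\sigma,p)+p\}$ and taking $q\ge n$ leaves at least $p$ admissible vertices in $C_j$. A routine check of adjacencies then confirms that $\bigcup_{j}K^j\cup\{v_1,\dots,v_s\}$ induces a copy of $G(\ho,p)$ in $G$.
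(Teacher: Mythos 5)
Your proof is correct: the $\widetilde{K}_{2m}$-free dichotomy, the bound on $|C_j\cap Q([C_i]_q^m)|$ for inequivalent cliques, and the greedy choice of the $p$-cliques $K^j$ with the listed forbidden sets all check out and yield an induced copy of $G(\ho,p)$ once $q$ exceeds a bound depending only on $m,\phi,\sigma,p$. Note that the paper itself does not prove this proposition but quotes it from \cite{HJJ}, and your argument follows essentially the same strategy as the proof given there.
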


The theorem below is a modification of \cite[Theorem 6.2]{HJJ} and its proof follows the proof of \cite[Theorem 6.2]{HJJ}. We state it for the convenience of readers. It will be used in the next section.

\begin{theorem}\label{quasicliques}
Let $t$ be a positive integer and $m(t)=\min\{m \mid \lambda_{\min}(\widetilde{K}_{2m}) < -t-1\}$. Then there exists a positive integer $p(t)$ such that if $G$ is a graph with $\lambda_{\min}(G)\geq -t-1$, then for any integer $r\ge p(t)$, the quasi-cliques $Q_1, Q_2, \ldots, Q_c$ of $G$ with respect to the pair $(m(t), r)$ satisfy the following conditions:
\begin{enumerate}
\item the complement of $V(Q_i)$ has degree at most $t ^2$ for $i=1,2, \ldots, c$,
\item the intersection $V(Q_i)\cap V(Q_j)$ contains at most t vertices for $1 \leq i < j \leq c$.
\end{enumerate}
\end{theorem}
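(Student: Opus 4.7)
My plan is to adapt the proof of \cite[Theorem~6.2]{HJJ}, which settles this statement for $t=1$, upgrading every quantitative estimate from the smallest-eigenvalue-$\ge -2$ regime to the smallest-eigenvalue-$\ge -t-1$ regime. Set $m=m(t)$. By the very definition of $m(t)$ and Lemma~\ref{matrixinterlace}(i), the hypothesis $\lambda_{\min}(G)\ge -t-1$ immediately forbids $\widetilde K_{2m}$ as an induced subgraph of $G$, so for every $r\ge (m+1)^{2}$ the relation $\equiv_r^m$ is an equivalence relation on maximal cliques of size $\ge r$ and the quasi-cliques $Q_1,\dots,Q_c$ are well defined. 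I would then choose $p(t)\ge (m+1)^{2}$ large enough that the forthcoming estimates have room to breathe.

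For~(i), I would argue by contradiction. If some $Q_i$ has a vertex $x\in V(Q_i)$ with non-neighbours $y_{1},\dots,y_{t^{2}+1}\in V(Q_i)$, then each of these $t^{2}+2$ vertices has at most $m-1$ non-neighbours in $C_i$, so they share a common-neighbour clique $N\subseteq C_i$ of size at least $r-(t^{2}+2)(m-1)$. On the induced subgraph on $\{x,y_{1},\dots,y_{t^{2}+1}\}\cup N$, I would apply Lemma~\ref{matrixinterlace}(ii) to the partition $\pi=(\{x\},\{y_{1},\dots,y_{t^{2}+1}\},N)$. A direct computation with the resulting $3\times 3$ quotient matrix gives that, in the extreme where the $y_{j}$'s form an independent set, its least root approaches $-(t^{2}+2)$ as $|N|\to\infty$, and since $t^{2}+2>t+1$ for every $t\ge 1$, choosing $p(t)$ large forces this root strictly below $-(t+1)$. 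The opposite extreme, where the $y_{j}$'s form a clique, is handled directly by the preliminary step, because $N\cup\{y_{1},\dots,y_{t^{2}+1}\}$ is then a $(|N|+t^{2}+1)$-clique in which $x$ misses exactly $t^{2}+1\ge m$ vertices and therefore induces a copy of $\widetilde K_{2m}$. Intermediate adjacency patterns among the $y_{j}$'s interpolate between these extremes and shift the quotient eigenvalue by an amount bounded in terms of $t$ alone, which $p(t)$ absorbs.

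For~(ii), I would again assume $|V(Q_i)\cap V(Q_j)|\ge t+1$ and pick $z_{0},\dots,z_{t}$ in the intersection. They share clique neighbourhoods $A\subseteq C_i$ and $B\subseteq C_j$, each of size $\ge r-(t+1)(m-1)$. Since $C_i\not\equiv_r^m C_j$, after possibly interchanging $i$ and $j$ there is a vertex of $C_i$ with at least $m$ non-neighbours in $C_j$. The goal is to promote this single asymmetry into an induced $\widetilde K_{2m}$ inside $\{z_{0},\dots,z_{t}\}\cup A\cup B$: one locates a vertex $a\in A$ whose $m$ non-neighbours in $C_j$ all survive in $B$, so that $\{a\}\cup B'\cup\{z_{0}\}$, with $B'$ a suitable $(2m-1)$-subclique of $B$ containing those non-neighbours, realises $\widetilde K_{2m}$ (with $a$ as the exceptional vertex). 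This contradicts the preliminary step.

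The chief obstacle is the quasi-clique counting in~(ii): a naive pigeonhole on a single asymmetric vertex $a^{*}\in C_i$ with $\ge m$ non-neighbours in $C_j$ is \emph{not} enough to guarantee $m$ non-neighbours surviving in $B$, since $B$ itself may exclude up to $(t+1)(m-1)$ vertices of $C_j$. One has instead to exploit the presence of all $t+1$ common vertices $z_{k}$ simultaneously, using them to inflate the pool of asymmetric vertices inside $A$ through the associated-Hoffman-graph structure developed in Section~\ref{familyofHoffmangraphs}, and it is precisely the interplay between ``$t+1$ common vertices'' and ``$(t+1)(m-1)$ excluded vertices'' that forces the sharp bound $t$ in the conclusion and pins down $p(t)$. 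The eigenvalue calculation in~(i) is, by comparison, quantitatively straightforward once the quotient matrix is written down. After these two technical inputs, the rest is bookkeeping that closely follows \cite[\S6]{HJJ} with $-2$ replaced by $-t-1$ throughout.
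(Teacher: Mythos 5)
Your plan for part (ii) has a gap that is not merely technical, and you flag it yourself as ``the chief obstacle''. Worse, the route you propose cannot be completed: you want to contradict $\widetilde K_{2m(t)}$-freeness by manufacturing an induced $\widetilde K_{2m(t)}$ out of two quasi-cliques sharing $t+1$ vertices, but such a configuration need not contain any induced $\widetilde K_{2m(t)}$ at all. Writing $\widetilde K_{2m}$ as $K_{2m+1}$ minus a star $K_{1,m}$ and applying Weyl's inequality gives $\lambda_{\min}(\widetilde K_{2m})\ge -1-\sqrt m$, so $m(t)\ge t^2+1$, and one checks that in fact $m(t)>t^2+1$ (for instance $m(1)=4$); in particular $m(t)>t+1$. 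Consequently the graph consisting of two disjoint large cliques together with a $(t+1)$-clique $Z$ joined to every vertex is $\widetilde K_{2m(t)}$-free, its two big maximal cliques are inequivalent under $\equiv_r^{m(t)}$, and their quasi-cliques share the $t+1$ vertices of $Z$. Hence no counting argument based only on $\widetilde K_{2m(t)}$-freeness can prove (ii); the hypothesis $\lambda_{\min}(G)\ge -t-1$ must be used a second time. That is exactly what the paper does: it takes the Hoffman graph $\mathfrak y$ with two fat vertices and $t+1$ common slim vertices, notes that $S(\mathfrak y)$ has diagonal $-2$ and off-diagonal entries in $\{-2,-1\}$, hence $\lambda_{\min}(\mathfrak y)\le -t-2<-t-1$, and then invokes Theorem \ref{Ostrowski} together with Proposition \ref{assohoff} to choose $p(t)$ so that the associated Hoffman graph of a graph with smallest eigenvalue at least $-t-1$ cannot contain $\mathfrak y$. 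Your appeal to ``the associated-Hoffman-graph structure of Section 3'' is not a substitute for this step.

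Part (i) is closer, but also incomplete as written. Your ``clique extreme'' fails for the same numerical reason: since $m(t)>t^2+1$, a vertex non-adjacent to the $t^2+1$ vertices $y_1,\dots,y_{t^2+1}$ of a clique does not yield an induced $\widetilde K_{2m(t)}$. And ``intermediate adjacency patterns interpolate'' is an assertion, not an argument; what has to be checked is that for every possible average degree $\delta\in[0,t^2]$ inside $Y$, the three-cell quotient matrix has smallest eigenvalue tending, as $|N|\to\infty$, to the smaller root of $x^2+(t^2+2-\delta)x-\delta$, which does lie below $-t-1$ for all such $\delta$ (with margin only $\sqrt{t^2+1}-t$ at $\delta=t^2$), after which a uniform $p(t)$ can be extracted. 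So (i) is repairable along your lines, but the paper obtains it with no case analysis by forming the one-fat-vertex Hoffman graph $\mathfrak b(H)$ whose slim graph is the complement of $H$, so that $S(\mathfrak b(H))=-A(H)-I$ and $\lambda_{\min}(\mathfrak b(H))\le -1-\sqrt{t^2+1}<-t-1$ by Perron--Frobenius, and then transfers this to $G$ via Theorem \ref{Ostrowski} and Proposition \ref{assohoff} --- the same mechanism it uses for (ii), and the one your proposal is missing.
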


\begin{proof}
Let $H$ be a graph with $t^2 +2$ vertices such that at least one vertex has degree $t^2 +1$.
Then, by the Perron-Frobenius theorem (\cite{Ch&G}, Theorem 8.8.1 (b)), the largest eigenvalue of $H$ is at least $\sqrt{t^2 +1} > t$. Now, consider the fat Hoffman graph $\mathfrak{b}=\mathfrak{b}(H)$ with one fat vertex such that the slim graph of $\mathfrak{b}(H)$ is equal to the complement of $H$. Then $S(\mathfrak{b}) = -A({H}) -I.$ It follows that $\lambda_{\min}(\mathfrak{b}) \leq -\sqrt{t^2+1} -1 < -t-1.$
Hence, by Theorem \ref{Ostrowski} and Proposition \ref{assohoff}, there exists a positive integer $p_H\geq (m(t)+1)^2$ such that for any integer $r_1 \geq p_H$, the
 associated Hoffman graph $\go( G, m(t), r_1)$ does not contain
$\mathfrak{b}(H)$ as an induced Hoffman subgraph. Let $p_1$ be the maximum of all $p_H$, where the maximum is taken over all graphs $H$
having exactly $t^2 + 2$ vertices and containing a vertex with degree $t^2 + 1$.

Let $\mathfrak{y}$ be any fat Hoffman graph with exactly two fat vertices $F_1, F_2$ and $t+1$ slim vertices $x_1, x_2, \ldots, x_{t+1}$, such that each vertex $x_i$ is adjacent to $F_1$ and $F_2$, for $i=1, 2, \dots, t+1.$ Then the diagonal elements of $S(\mathfrak{y})$ are equal to $-2$ and the rest of the entries of $S(\mathfrak{y})$ are in $\{-2, -1\}$.
This shows that $\lambda_{\min}(\mathfrak{y}) \leq -t-2 < -t-1$.
Hence, by Theorem \ref{Ostrowski} and Proposition \ref{assohoff}, there exists a positive integer $p(\mathfrak{y}) \geq (m(t)+1)^2$ such that for any integer  $r_2 \geq p(\mathfrak{y})$, the  associated Hoffman graph $\go( G, m(t), r_2)$ does not contain
$\mathfrak{y}$ as an induced Hoffman subgraph. Let $p_2$ be the maximum of all such $p(\mathfrak{y})$'s.

Now take $p(t) = \max\{p_1, p_2\}$. Then the result follows.
\end{proof}

Now, by using Proposition \ref{assohoff}, we can show the theorem below.

\begin{theorem} \label{slimgraph}
Let $t$ be a positive integer and $m(t)=\min\{m \mid \lambda_{\min}(\widetilde{K}_{2m}) < -t-1\}$. Then there exists a positive integer $p^\prime(t)$ such that if G is a graph with $\lambda_{\min}(G)\geq -t-1$, then for any integer $r\ge p^\prime(t)$, the associated Hoffman graph $\go(G,m(t),r)$ does not contain an induced Hoffman subgraph whose special matrix is contained in $\mathcal{M}(t)$.

Moreover, if $\go(G,m(t),r)$ is $t$-fat for some $r\ge p^\prime(t)$, then $G$ is the slim graph of a $t$-fat $\Go(t)$-line Hoffman graph.
\end{theorem}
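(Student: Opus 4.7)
The plan is to apply Theorem~\ref{Ostrowski} together with Proposition~\ref{assohoff} to rule out, for $r$ sufficiently large, every Hoffman graph with special matrix in $\mathcal{M}(t)$ as an induced Hoffman subgraph of $\go(G,m(t),r)$. The eigenvalue computations preceding Proposition~\ref{forbidden} give $\lambda_{\min}(M)\leq -t-\sqrt{2}<-t-1$ for every $M\in\mathcal{M}(t)$, so for any Hoffman graph $\mathfrak{h}$ with $S(\mathfrak{h})=M$ we have $\lambda_{\min}(\mathfrak{h})<-t-1$ and Theorem~\ref{Ostrowski} supplies an integer $p_\mathfrak{h}$ with $\lambda_{\min}(G(\mathfrak{h},p_\mathfrak{h}))<-t-1$. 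Since $\lambda_{\min}(G)\geq -t-1>\lambda_{\min}(\widetilde{K}_{2m(t)})$, Lemma~\ref{matrixinterlace}(i) shows $G$ has no induced $\widetilde{K}_{2m(t)}$; hence Proposition~\ref{assohoff} applies and provides a threshold $r_\mathfrak{h}$ such that, for every $r\geq r_\mathfrak{h}$, containment of $\mathfrak{h}$ in $\go(G,m(t),r)$ as an induced Hoffman subgraph would force $G(\mathfrak{h},p_\mathfrak{h})$ to be an induced subgraph of $G$, whence interlacing would give $\lambda_{\min}(G)<-t-1$, a contradiction.

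The main obstacle is that $\mathcal{M}(t)$ is infinite, because $M_1(t)=\{(-t+a)\mid a\leq -2\}$ is infinite. To reduce to a finite collection of forbidden Hoffman graphs, I would observe that the unique Hoffman graph with special matrix $(-t+a)$ is $\mathfrak{h}^{(t-a)}$, and for every $a\leq -2$ this contains $\mathfrak{h}^{(t+2)}$ as an induced Hoffman subgraph (simply restrict to the slim vertex together with any $t+2$ of its fat neighbors). Hence $\go(G,m(t),r)$ has an induced Hoffman subgraph whose special matrix lies in $M_1(t)$ if and only if $\mathfrak{h}^{(t+2)}$ itself is an induced Hoffman subgraph. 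For each of the finitely many matrices in $M_2(t)\cup M_3(t)\cup M_4(t)\cup M_5(t)$, only finitely many Hoffman graphs realize it, since the diagonal entries fix the number of fat neighbors of each slim vertex and the off-diagonal entries leave only finitely many compatible bipartite adjacency patterns between slim and fat vertices. This yields a finite list of forbidden Hoffman graphs $\mathfrak{h}$, and I would set $p^\prime(t):=\max_\mathfrak{h}r_\mathfrak{h}$ over this finite list.

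For the ``moreover'' part, suppose $\go:=\go(G,m(t),r)$ is $t$-fat for some $r\geq p^\prime(t)$. For every subset $W\subseteq V_{\rm slim}(\go)$, the induced Hoffman subgraph $\langle W\rangle_\go$ has special matrix equal to the principal submatrix of $S(\go)$ indexed by $W$; by the first part, this submatrix cannot be equivalent to any element of $\mathcal{M}(t)$. Hence $S(\go)$ contains no principal submatrix equivalent to an element of $\mathcal{M}(t)$, so Theorem~\ref{line} gives at once that $\go$ is a $t$-fat $\Go(t)$-line Hoffman graph. Since the slim graph of $\go(G,m(t),r)$ is $G$ by construction, $G$ is the slim graph of a $t$-fat $\Go(t)$-line Hoffman graph, completing the argument.
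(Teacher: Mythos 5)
Your proposal is correct and follows essentially the same route as the paper: reduce the infinite family $M_1(t)$ to the single graph $\ho^{(t+2)}$, note that only finitely many Hoffman graphs realize the remaining matrices of $\mathcal{M}(t)$, apply Theorem \ref{Ostrowski} and Proposition \ref{assohoff} to each and take the maximum threshold, then deduce the ``moreover'' part from Theorem \ref{line}. The only cosmetic differences are that the paper packages the finitely many graphs with at least two slim vertices via the uniform bounds of at most $3$ slim and $3t+1$ fat vertices (so a single $n(m(t),3t+1,3,p_i)$ suffices), and that you spell out explicitly the identification $S(\langle W\rangle_{\go})=S(\go)[W]$, which the paper leaves implicit.
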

\begin{proof}
Let $G$ be a graph with $\lambda_{\min}(G)\geq -t-1$. First, it is clear that $G$ does not contain  $\widetilde{K}_{2m(t)}$ as an induced subgraph. We will show that there exists a positive integer $p^\prime(t)$ such that for $r\geq p^\prime(t)$, the associated Hoffman graph $\go(G,m(t),r)$ does not contain an induced Hoffman subgraph whose special matrix is an element of $\mathcal{M}(t)$. Note that every Hoffman graph whose special matrix is in $M_1(t)$ has only one slim vertex and contains $\ho^{(t+2)}$ as an induced Hoffman subgraph.
Let $\ho_1=\ho^{(t+2)}$ and $\ho_2, \dots, \ho_{t^\prime}$ be all of Hoffman graphs with at least 2 slim vertices whose special matrices are an element of $\mathcal{M}(t)$. Then each $\ho_i$ has at most 3 slim vertices and at most $3t+1$ fat vertices. Since $\lambda_{\min}(\ho_i)<-t-1$, there exists a positive integer $p_i$ such that $\lambda_{\min}(G(\ho_i, p_i))<-t-1$ for all $1\leq i \leq t^\prime$ by Theorem \ref{Ostrowski}. Let $p^\prime_i=n(m(t),3t+1,3,p_i)$ and $p^\prime(t) = \max_{1\leq i \leq t^\prime}p^\prime_i$. Then for $r\geq p^\prime(t)$, $\go(G,m(t),r)$ does not contain an induced Hoffman subgraph whose special matrix is an element of $\mathcal{M}(t)$ by Proposition \ref{assohoff}. If $\go(G,m(t),r)$ is $t$-fat for some $r\ge p^\prime(t)$, $\go(G, m(t), r)$ is a $t$-fat $\Go(t)$-line Hoffman graph by Theorem \ref{line}. Hence, $G$ is the slim graph of a $t$-fat $\Go(t)$-line Hoffman graph.
\end{proof}

\section{Main theorems}

Recall that the Ramsey number $R(a,b)$ is defined by the well-known Ramsey's Theorem.

\begin{theorem}\label{Ramsey}
(\cite{Ramsey}) Let a, b be two positive integers. Then there exists a minimum positive integer $R(a,b)$ such that for any graph $G$ on $n\geq R(a,b)$ vertices, the graph $G$ contains a clique of the order $a$ or an independent set of size $b$.
\end{theorem}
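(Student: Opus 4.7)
The statement is the classical Ramsey theorem, so the plan is to give the standard double induction proof, which produces an explicit (albeit non-tight) upper bound $R(a,b)\le \binom{a+b-2}{a-1}$.

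The approach is induction on $a+b$. For the base cases I would take $a=1$ or $b=1$: any graph on a single vertex contains a clique of size $1$ (so $R(1,b)=1$) and any graph on a single vertex is an independent set of size $1$ (so $R(a,1)=1$). These are trivial but cover the boundary of the induction.

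For the inductive step, the key lemma is the recurrence $R(a,b)\le R(a-1,b)+R(a,b-1)$, from which existence and an explicit bound follow. To prove this, I would take a graph $G$ on $n=R(a-1,b)+R(a,b-1)$ vertices, fix an arbitrary vertex $v$, and partition $V(G)\setminus\{v\}$ into its neighbourhood $N(v)$ and non-neighbourhood $\overline N(v)$. By pigeonhole, either $|N(v)|\ge R(a-1,b)$ or $|\overline N(v)|\ge R(a,b-1)$. In the first case, the induced subgraph on $N(v)$ contains, by induction, either a clique of order $a-1$ (which together with $v$ gives a clique of order $a$) or an independent set of size $b$, and we are done. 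In the second case, the induced subgraph on $\overline N(v)$ contains, by induction, either a clique of order $a$ or an independent set of size $b-1$ (which together with $v$ gives an independent set of size $b$). Iterating the recurrence from the base cases yields $R(a,b)\le\binom{a+b-2}{a-1}$, which is finite, proving existence of the minimum $R(a,b)$.

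There is no real obstacle in this proof: the argument is entirely elementary, and the only subtlety is bookkeeping, namely verifying that the base cases align with the chosen induction variable and that one does not accidentally produce a clique of size $a-1$ that is disjoint from $v$ in the first case (it is not, because all vertices of $N(v)$ are adjacent to $v$). Since the paper only invokes the existence of $R(a,b)$ and not the optimal value, the crude bound from the recurrence is more than sufficient for the applications that follow.
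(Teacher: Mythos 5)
Your proof is the standard Erd\H{o}s--Szekeres double induction and it is correct: the base cases $R(1,b)=R(a,1)=1$, the recurrence $R(a,b)\le R(a-1,b)+R(a,b-1)$ via the pigeonhole split of $V(G)\setminus\{v\}$ into $N(v)$ and $\overline N(v)$, and the resulting bound $R(a,b)\le\binom{a+b-2}{a-1}$ are all sound, and the bound is indeed all that the paper needs. Note, however, that the paper does not prove this statement at all; it simply cites Ramsey's 1930 paper, so there is no internal argument to compare against --- your self-contained proof (in fact the now-classical argument rather than Ramsey's original one) is perfectly adequate for the role the theorem plays here, namely supplying the existence of $R(n(t),(t+1)^2+1)$ in Theorems \ref{main1} and \ref{main2}.
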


By using the concept of associated Hoffman graph and Ramsey theory, we will prove our main theorems as follows.

\begin{theorem}\label{main1}
Let $t \geq 2$ be an integer and $s\in\{t-1,t\}$. Then, there exists a positive integer $K(t)$, such that if a graph $G$ satisfies the following conditions:
\begin{enumerate}
\item $d(x)>K(t)$ holds for all $x \in V(G)$;
\item for all $x \in V(G)$, any $(t^2 +1)$-plex containing $x$ has the order at most $\frac{d(x)-K(t)}{s}$;
\item $\lambda_{\min} (G) \geq -t-1$,
\end{enumerate}
then the following hold:
\begin{itemize}
\item[$(a)$] If $s=t-1$, then $G$ is the slim graph of a $t$-fat $\Go(t)$-line Hoffman graph.
\item[$(b)$] If $s=t$, then $G$ is the slim graph of a $(t+1)$-fat $\ho^{(t+1)}$-line Hoffman graph.
\end{itemize}
\end{theorem}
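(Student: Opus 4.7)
The plan is to take $r$ large and work with the associated Hoffman graph $\go := \go(G, m(t), r)$, where $m(t) = \min\{m : \lambda_{\min}(\widetilde{K}_{2m}) < -t-1\}$, and to show that $\go$ is $(s+1)$-fat. Once this is established, case (a) follows from the ``moreover'' part of Theorem~\ref{slimgraph}. In case (b), being $(t+1)$-fat combined with Theorem~\ref{class} forces every indecomposable component of $\go$ to have special matrix $(-t-1)$: the other three possibilities of Theorem~\ref{class} all contain a diagonal entry equal to $-t$, incompatible with fat-degree at least $t+1$. Hence $\go = \uplus_x \ho^{(t+1)}$ is itself a $(t+1)$-fat $\{\ho^{(t+1)}\}$-line Hoffman graph. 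I take $r \geq \max(p(t), p^\prime(t))$ so that Theorem~\ref{quasicliques} applies (each quasi-clique is a $(t^2+1)$-plex and pairwise intersections have at most $t$ vertices) and Theorem~\ref{slimgraph} applies ($\go$ contains no induced Hoffman subgraph with special matrix in $\mathcal{M}(t)$). A blow-up argument via Theorem~\ref{Ostrowski} and Proposition~\ref{assohoff}, applied to the one-slim/$(t+2)$-fat Hoffman graph with special matrix $(-t-2)$, forces every slim vertex of $\go$ to have at most $t+1$ fat neighbors once $r$ is large enough.

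For the key claim, suppose some $x \in V(G)$ has only $k \leq s$ fat neighbors $F_1, \dots, F_k$ with corresponding quasi-cliques $Q_1, \dots, Q_k$. Condition~(2) gives $|Q_i| \leq (d(x)-K(t))/s$, so using $|N(x) \cap Q_i| \leq |Q_i|-1$ and $k \leq s$,
\[
\bigg|\bigcup_{i=1}^k (N(x) \cap Q_i)\bigg| \leq k\bigg(\frac{d(x)-K(t)}{s}-1\bigg) \leq d(x) - K(t) - k.
\]
Hence the set $Y := N(x) \setminus \bigcup_i Q_i$ of neighbors of $x$ sharing no fat neighbor with $x$ has $|Y| \geq K(t) + k$. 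Choosing $K(t)$ larger than $R(r,\, (t+2)\, R((t+1)(t^2+1)+2,\, (t+1)^2+1))$, Ramsey's Theorem~\ref{Ramsey} applied to $Y$ yields either a clique of size $r$ in $Y$---impossible, since such a clique together with $x$ extends to a maximal clique of size $\geq r+1$ in some $\mathcal{C}(r)$-class, placing the clique inside a quasi-clique $Q_j$ and contradicting $Y \cap Q_j = \emptyset$---or a large independent set $I \subseteq Y$. A pigeonhole on the fat-degree $k_y \in \{0, \dots, t+1\}$ yields a subset $I^* \subseteq I$ with common fat-degree $k^*$ and $|I^*| \geq |I|/(t+2)$. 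A second Ramsey step inside $I^*$, using the ``shares a common fat neighbor'' relation, further extracts $I^{**} \subseteq I^*$ of size $(t+1)^2+1$ with pairwise fat-disjoint vertices: the share-fat-clique case is impossible because by Theorem~\ref{quasicliques}(i) each quasi-clique (being a $(t^2+1)$-plex) contains at most $t^2+1$ pairwise non-adjacent vertices, so any $y_1 \in I^*$ can share a fat neighbor with at most $k^*(t^2+1) \leq (t+1)(t^2+1)$ other vertices of $I^*$.

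The induced Hoffman subgraph $\ho$ generated by $\{x\} \cup I^{**}$ has arrowhead special matrix with $S_{xx} = -k$, $S_{yy} = -k^*$ for $y \in I^{**}$, $S_{xy} = 1$, and $S_{yz} = 0$ for distinct $y, z \in I^{**}$. Its eigenvalues different from $-k^*$ satisfy $(\lambda+k)(\lambda+k^*) = |I^{**}|$, giving $\lambda_{\min}(\ho) < -t-1$ precisely when $|I^{**}| > (t+1-k)(t+1-k^*)$, which is guaranteed by $|I^{**}| \geq (t+1)^2+1$ together with $k \leq s \leq t$. Since $\ho$ has at most $(t+1)^2+2$ slim and at most $(t+1)^3 + (t+1)$ fat vertices, Theorem~\ref{Ostrowski} gives a blow-up $G(\ho, p)$ with $\lambda_{\min} < -t-1$, and Proposition~\ref{assohoff} (for $r$ taken sufficiently large depending only on $t$) realizes this blow-up as an induced subgraph of $G$. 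This contradicts condition~(3), proving $k \geq s+1$. The main obstacle is aligning the several nested Ramsey and pigeonhole estimates with the bounds produced by Proposition~\ref{assohoff} so that a single $K(t)$, depending only on $t$, suffices uniformly in both cases.
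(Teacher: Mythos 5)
Your proposal is correct and shares the paper's overall skeleton---pass to the associated Hoffman graph $\go(G,m(t),r)$ with $r\ge\max\{p(t),p^\prime(t)\}$, prove it is $(s+1)$-fat by contradiction using condition (ii) together with Theorem \ref{quasicliques}, then get (a) from Theorem \ref{slimgraph} and (b) from the fact that $\ho^{(t+1)}$ is the only member of $\Go(t)$ compatible with $(t+1)$-fatness---but you run the crucial fatness step through the opposite Ramsey branch. The paper kills the independent-set branch in one line: an independent set of size $(t+1)^2+1$ inside $N(x)$ together with $x$ is an induced $K_{1,(t+1)^2+1}$ in $G$ itself, whose smallest eigenvalue $-\sqrt{(t+1)^2+1}<-t-1$ already violates (iii); the surviving clique branch then produces a new quasi-clique containing $x$, which is precisely the contradiction with $c(x)\le s$ (the same observation you use to declare the clique branch ``impossible''). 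You instead extract the contradiction from the independent-set branch, via a pigeonhole on fat-degrees, a second Ramsey (really a bounded-degree) step yielding $(t+1)^2+1$ pairwise fat-disjoint vertices, the arrowhead special-matrix computation $(\lambda+k)(\lambda+k^*)=N$, and an Ostrowski/assohoff blow-up. This is valid: your criterion $N>(t+1-k)(t+1-k^*)$ is right, and the uniformization over the finitely many Hoffman graphs with boundedly many slim and fat vertices that you flag as the main obstacle is exactly the device already used in the proofs of Theorems \ref{quasicliques} and \ref{slimgraph}. What it costs is the nested constant $K(t)>R\bigl(r,(t+2)R((t+1)(t^2+1)+2,(t+1)^2+1)\bigr)$ and a further enlargement of $r$, where the paper gets away with $K(t)=R(n(t),(t+1)^2+1)$ and no extra use of the Hoffman machinery at this stage; what it buys is independence from the explicit claw observation. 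Two cosmetic points: the fat-vertex count for $\langle\{x\}\cup I^{**}\rangle_{\go}$ should be about $((t+1)^2+2)(t+1)$ rather than $(t+1)^3+(t+1)$ (harmless, since any bound depending only on $t$ suffices), and the bound $k_y\le t+1$ can simply be quoted from Theorem \ref{slimgraph}, since $(-t-2)\in M_1(t)$, instead of re-running the blow-up argument for $\ho^{(t+2)}$.
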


\begin{proof}
Let $m(t)=\min\{m \mid \lambda_{\min}(\widetilde{K}_{2m}) < -t-1\}$. Then $G$ does not contain $\widetilde{K}_{2m(t)}$ as an induced subgraph. Choose $n(t)=\max\{p(t),p^\prime(t)\}$ where $p(t)$ and $p^\prime(t)$ are such that Theorems \ref{quasicliques} and \ref{slimgraph} both hold. Let $K(t)$ be the Ramsey number $R(n(t),(t+1)^2+1)$.

Pick a vertex $x$ and let $Q_1, \dots, Q_{c(x)}$ be the quasi-cliques with respect to the pair $(m(t),n(t))$ containing $x$. Suppose that $c(x)\leq s$. Then from Theorem \ref{quasicliques}
(i), we know that each $Q_i$ is a $(t^2+1)$-plex, so $|V(Q_i)|\leq \frac{d(x)-K(t)}{s}$ for $i=1,\ldots,c(x)$. We obtain
$$\big|\cup_{i=1}^{c(x)}V(Q_i)\big|\leq \sum\nolimits_{i=1}^{c(x)}|V(Q_i)|\leq \frac{(d(x)-K(t))c(x)}{s} \leq d(x)-K(t).$$
Now, $V(G_x)-\cup_i V(Q_i)$ has at least $K(t)$ vertices and $G$ does not contain $((t+1)^2+1)$-claw as an induced subgraph. Hence we can find a new clique with at least $n(t)$ vertices in $V(G_x) - \cup_i V(Q_i)$, and a new quasi-clique $Q_{c(x)+1}$ containing $x$. This makes a contradiction, so we obtain $c(x)\geq s+1$. This implies that each vertex $x$ in $G$ is contained in at least $s+1$ quasi-cliques with respect to the pair $(m(t),n(t))$ and the associated Hoffman graph $\go=\go(G,m(t),n(t))$ is $(s+1)$-fat.

If $s= t-1$, the graph $G$ is the slim graph of a $t$-fat $\Go(t)$-line Hoffman graph by Theorem \ref{slimgraph}. This shows (a).

If $s=t$, the graph $G$ is the slim graph of a $(t+1)$-fat $\Go(t)$-line Hoffman graph. However, the only $(t+1)$-fat Hoffman graph in $\Go(t)$ is $\ho^{(t+1)}$. Therefore $G$ is the slim graph of a $(t+1)$-fat $\{\ho^{(t+1)}\}$-line Hoffman graph and (b) follows.
\end{proof}

\vspace{0.2cm}
\begin{proof}[{\bf Proof of Theorem \ref{intro1}}] Let $K(t)$ be the smallest integer such that Theorem \ref{main1} holds. Then by using Theorem \ref{main1}, we find that $G$ is the slim graph of some Hoffman graph $\ho=\uplus_{i=1}^{v}\ho_i$ where $\ho_i$ is isomorphic to $\ho^{(t+1)}$ for $i=1,2,\ldots,v$.

Now define a hypergraph $H$ with vertex set $V_{\rm fat}(\ho)$ and (hyper)edge set $\big\{N_{\ho}^{\rm fat}(x) \mid x\in V_{\rm slim}(\ho)\big\}$. Note that $H$ is a linear $(t+1)$-uniform hypergraph. Let $x,y\in V_{\rm slim}(\ho)$, then $x$ and $y$ are adjacent in $G$ if and only if $|N_{\ho}^{\rm fat}(x)\cap N_{\ho}^{\rm fat}(y)|=1$. This shows that $G$ is the intersection graph of $H$.
\end{proof}

\vspace{0.2cm}
\begin{proof}[{\bf Proof of Theorem \ref{intro3}}]Let $K=K(2)$ be the smallest integer such that Theorem \ref{main1} holds. By using Theorem \ref{main1}, we find that $G$ is the slim graph of a $2$-fat $\Go(2)$-line Hoffman graph. Now we determine the finite family $\Go(2)$ of Hoffman graphs. For a Hoffman graph $\ho$ in $\Go(2)$,
\begin{itemize}
\item[$1.$] if the special matrix of $\ho$ is equal to $(-3)$, then $\ho$ is the Hoffman graph $\ho^{(3)}$, that is $\mathfrak{f}_1$;
\item[$2.$] if the special matrix of $\ho$ is $\left(
                                               \begin{array}{cc}
                                                 J_{r_1}-3I_{r_1} & -J_{r_1\times r_2} \\
                                                 -J_{r_2\times r_1} & J_{r_2}-3I_{r_2} \\
                                               \end{array}
                                             \right)$, where $r_1,r_2$ are integers such that $1\leq r_1,r_2\le 2$, then $\ho$ will be given as follows:

$(a)$ when $r_1=r_2=2$, then $\ho$ is the Hoffman graph $\mathfrak{f}_2$;

$(b)$ when $r_1=2,r_2=1$ or $r_1=1,r_2=2$, then $\ho$ is the Hoffman graph $\mathfrak{f}_{2,1}$ in Figure \ref{fsubf};

$(c)$ when $r_1=r_2=1$, then $\ho$ is the Hoffman graph $\mathfrak{f}_{2,2}$ in Figure \ref{fsubf} or $\mathfrak{f}_3$.

\begin{center}
\begin{figure}[H]
\includegraphics[scale=1.0]{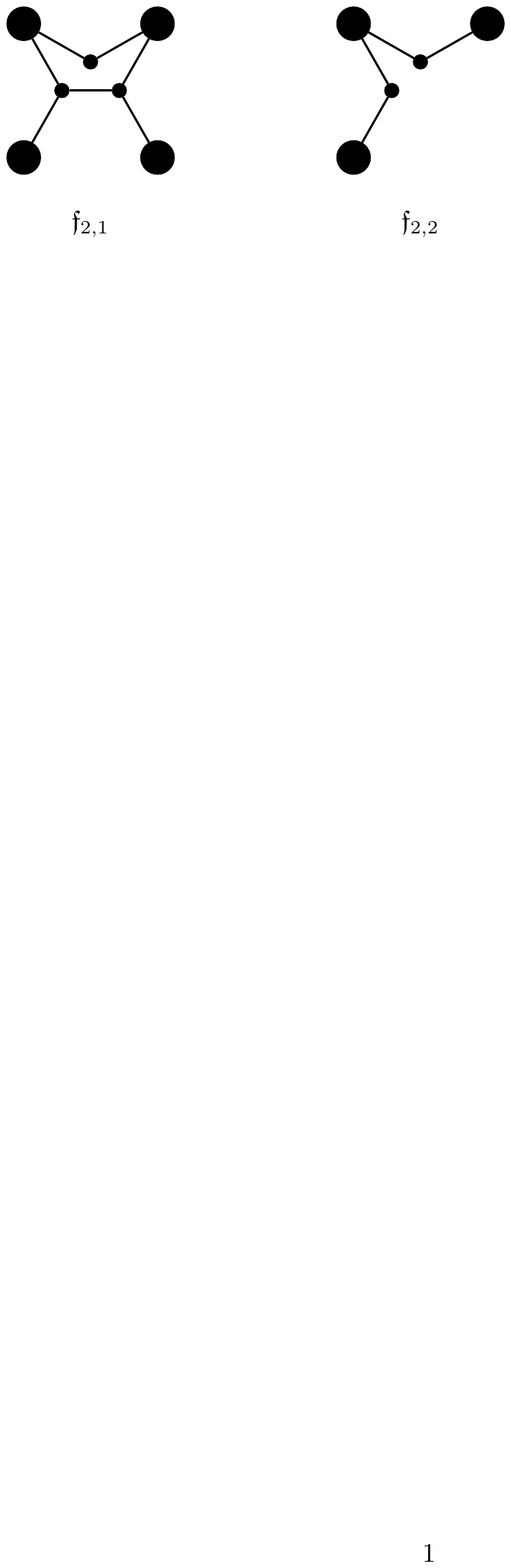}
{\caption{}\label{fsubf}}
\end{figure}
\end{center}
\vspace{-1cm}
\end{itemize}
Thus, $\mathfrak{G}(2)=\{\mathfrak{f}_1,\mathfrak{f}_2,\mathfrak{f}_{2,1},\mathfrak{f}_{2,2},\mathfrak{f}_3\}$ and $G$ is the slim graph of a $2$-fat $\{\mathfrak{f}_1,\mathfrak{f}_2,\mathfrak{f}_{2,1},\mathfrak{f}_{2,2},\mathfrak{f}_3\}$-line Hoffman graph.

Since $\mathfrak{f}_{2,1}$ and $\mathfrak{f}_{2,2}$ are induced Hoffman subgraphs of $\mathfrak{f}_2$, it is easy to check that $G$ is also the slim graph of a $2$-fat $\{\mathfrak{f}_1,\mathfrak{f}_2,\mathfrak{f}_3\}$-line Hoffman graph.
\end{proof}

\begin{theorem}\label{main2}
Let $t \geq 2$ be an integer and $s\in\{t-1,~t\}$. Then, there exists a positive integer $\kappa(t)$, such that if a graph $G$ satisfies the following conditions:
\begin{enumerate}
\item $d(x)>\kappa(t)$ holds for all $x \in V(G)$;

\item for all $x \in V(G)$, $\bar{d}(G_x) \leq \frac{d(x)-\kappa(t)}{s}$ holds;

\item $\lambda_{\min} (G) \geq -t-1$,
\end{enumerate}
then the following hold:
\begin{itemize}
\item [$(a)$]If $s=t-1$, then $G$ is the slim graph of a $t$-fat $\Go(t)$-line Hoffman graph.
\item [$(b)$]If $s=t$, then $G$ is the slim graph of a $(t+1)$-fat $\ho^{(t+1)}$-line Hoffman graph.
\end{itemize}
\end{theorem}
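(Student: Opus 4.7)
The plan is to replicate the structure of the proof of Theorem \ref{main1}, replacing its bound on the order of a $(t^2+1)$-plex by a counting of edges in $G_x$ that exploits the almost-clique structure of the quasi-cliques. Choose $m(t) = \min\{m : \lambda_{\min}(\widetilde{K}_{2m}) < -t-1\}$ and $n(t) = \max\{p(t), p^\prime(t)\}$ so that Theorems \ref{quasicliques} and \ref{slimgraph} apply, and set $R := R(n(t), (t+1)^2+1)$; take $\kappa(t)$ large depending only on $s, t, R$. Since $\lambda_{\min}(G) \geq -t-1$, the graph $G$ contains neither $\widetilde{K}_{2m(t)}$ nor the induced claw $K_{1,(t+1)^2+1}$ (the latter because $K_{1,k}$ has smallest eigenvalue $-\sqrt{k}$). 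Fix $x \in V(G)$ and let $Q_1, \dots, Q_{c(x)}$ be the quasi-cliques through $x$ with respect to $(m(t), n(t))$; as in Theorem \ref{main1}, it suffices to prove $c(x) \geq s+1$, after which Theorem \ref{slimgraph} together with the observation that $\mathfrak{h}^{(t+1)}$ is the only $(t+1)$-fat element of $\Go(t)$ yields parts $(a)$ and $(b)$.

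Assume for contradiction $c(x) \leq s$ and write $V_i := V(Q_i) \cap V(G_x)$ and $N := |\bigcup_i V_i|$. If $N \leq d(x) - R$, then $|V(G_x) \setminus \bigcup_i V_i| > R$, and Ramsey's theorem combined with the exclusion of $K_{1,(t+1)^2+1}$ produces a clique of size $n(t)$ in this set; together with $x$ it gives rise to a new quasi-clique through $x$ distinct from $Q_1, \dots, Q_{c(x)}$, a contradiction. So we may assume $N \geq d(x) - R$, whence $\sum_i |V_i| \geq N \geq d(x) - R$. By Theorem \ref{quasicliques}(i) each vertex of $V_i$ has at most $t^2$ non-neighbours in $V_i$, so $|E(G_x[V_i])| \geq (|V_i|^2 - (t^2+1)|V_i|)/2$; by Theorem \ref{quasicliques}(ii), $|V_i \cap V_j| \leq t$, so any two $V_i$ and $V_j$ share at most $\binom{t}{2}$ edges. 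Bonferroni together with the Cauchy--Schwarz estimate $\sum_i |V_i|^2 \geq (\sum_i |V_i|)^2/c(x) \geq (d(x)-R)^2/s$ yields
\[
2e(G_x) \;\geq\; \frac{(d(x)-R)^2}{s} \;-\; (t^2+1)\,s\,d(x) \;-\; s^2 t^2.
\]
The hypothesis reads $2e(G_x) = d(x)\bar{d}(G_x) \leq d(x)(d(x)-\kappa(t))/s$; subtracting, the $d(x)^2/s$ terms cancel and leave the linear inequality $(2R + s^2(t^2+1) - \kappa(t))\,d(x) \geq R^2 - s^3 t^2$, which fails for every $d(x) > \kappa(t)$ once $\kappa(t)$ exceeds an explicit polynomial in $s, t, R$ (for instance any $\kappa(t) > 2R + s^2(t^2+1) + s^2 t$ suffices in combination with $d(x) > \kappa(t)$).

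The main obstacle is the quantitative estimate in the second case: one must control the double-counting among overlapping quasi-cliques via Bonferroni and apply Cauchy--Schwarz at just the right moment to convert the additive lower bound $\sum_i |V_i| \geq d(x) - R$ into a quadratic lower bound on $\sum_i |V_i|^2$ that can compete with the $d(x)^2/s$ on the right--hand side. Once $c(x) \geq s+1$ is established, the remainder is verbatim the ending of the proof of Theorem \ref{main1}: the associated Hoffman graph $\mathfrak{g}(G, m(t), n(t))$ is $(s+1)$-fat, Theorem \ref{slimgraph} identifies it as a $(s+1)$-fat $\Go(t)$-line Hoffman graph, and parts $(a)$ and $(b)$ follow by inspection of $\Go(t)$.
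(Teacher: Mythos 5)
Your proposal is correct and follows essentially the same route as the paper: assume $c(x)\le s$, use the Ramsey number $R(n(t),(t+1)^2+1)$ together with the forbidden claw to bound the part of $G_x$ outside the quasi-cliques, lower-bound $2|E(G_x)|$ quadratically via the $(t^2+1)$-plex property of the quasi-cliques and Cauchy--Schwarz, contradict hypothesis (ii), and conclude that $\go(G,m(t),n(t))$ is $(s+1)$-fat, after which Theorem \ref{slimgraph} and the ending of Theorem \ref{main1} give (a) and (b). The only (harmless) difference is bookkeeping: the paper disjointifies the sets $V(G_x)\cap V(Q_i)$ so edges are counted once, whereas you keep the overlapping pieces and correct the double count with Theorem \ref{quasicliques}(ii) and a Bonferroni bound, which merely changes the explicit value of $\kappa(t)$.
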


\begin{proof}
Let $m(t)=\min\{m \mid \lambda_{\min}(\widetilde{K}_{2m}) < -t-1\}$, then $G$ does not contain $\widetilde{K}_{2m(t)}$ as an induced subgraph. Choose $n(t)=\max\{p(t),p^\prime(t),2t^2-t+2\}$ where $p(t)$ and $p^\prime(t)$ are such that Theorems \ref{quasicliques} and \ref{slimgraph} both hold. Let $\kappa(t) :=2R(n(t),(t+1)^2+1)+(t^2+1)t$ be an integer. It suffices to show that the associated Hoffman graph $\go(G,m(t),n(t))$ is $(s+1)$-fat.

Pick a vertex $x$ and let $Q_1, \dots, Q_{c(x)}$ be the quasi-cliques with respect to the pair $(m(t),n(t))$ containing $x$. Suppose that $c(x)\leq s$. Let  $Q'_i$ be a subgraph of $G$ induced by $V(G_x) \cap V(Q_i)-\cup_{j<i}V(Q_j)$ and let $\alpha_i=|V(Q'_i)|$ for $i=1,\ldots,c(x)$. By Theorem \ref{quasicliques} (ii), we know that $|V(Q'_i)|\geq |V(Q_i)|-(i-1)t\geq n(t)-(c(x)-1)t\geq n(t)-(s-1)t>t^2+1$. Note that $\sum\nolimits_{i=1}^{c(x)}\alpha_i=d(x)-d'$ with $d'<R(n(t),(t+1)^2+1)$.

Then by using Theorem \ref{quasicliques} (i)
\begin{equation*}
\begin{split}
   2|E(G_x)|&\geq 2\sum\nolimits_{i=1}^{c(x)}|E(Q'_i)|\\
                  &\geq\sum\nolimits_{i=1}^{c(x)}\alpha_i(\alpha_i-1-t^2)\\
                  &=\sum\nolimits_{i=1}^{c(x)}\alpha_i^2-(t^2+1)\sum\nolimits_{i=1}^{c(x)}\alpha_i\\
                  &\geq \frac{(\sum\nolimits_{i=1}^{c(x)}\alpha_i)^2}{c(x)}-(t^2+1)(d(x)-d')\\
                  &\geq\frac{(d(x)-d')^2}{s}-(t^2+1)(d(x)-d').
\end{split}
\end{equation*}

By (ii), $2|E(G_x)| = \bar{d}(G_x) d(x) \leq \frac{d(x)-\kappa(t)}{s}d(x)$. Now we have
\begin{equation*}\begin{split}
0&\geq\frac{(d(x)-d')^2}{s}-(t^2+1)(d(x)-d')-\frac{(d(x)-\kappa(t))d(x)}{s}\\
&=\frac{\kappa(t)-2d'-(t^2+1)s}{s}d(x)+\frac{d'^2}{s}+(t^2+1)d'.\\
\end{split}
\end{equation*}
Since $\kappa(t)=2R(n(t),(t+1)^2+1)+(t^2+1)t>2d'+(t^2+1)s$, it gives a contradiction.

Hence, we can conclude that the associated Hoffman graph $\go(G, m(t), n(t))$ is $(s+1)$-fat. This finishes the proof.
\end{proof}

\vspace{0.2cm}
\begin{proof}[{\bf Proof of Theorem \ref{intro2}}]Let $\kappa(t)$ be the smallest integer such that Theorem \ref{main2} holds. Then by Theorem \ref{main2}, we know that $G$ is the slim graph of a $(t+1)$-fat $\{\ho^{(t+1)}\}$-line Hoffman graph. As we did in the proof of Theorem \ref{intro1}, it is easy to see that $G$ is the intersection graph of some linear $(t+1)$-uniform hypergraph.
\end{proof}

\vspace{0.2cm}
\begin{proof}[{\bf Proof of Theorem \ref{intro4}}]Let $\kappa=\kappa(2)$ be the smallest integer such that Theorem \ref{main2} holds. Then by Theorem \ref{main2}, we know that $G$ is the slim graph of a $2$-fat $\Go(2)$-line Hoffman graph. Similarly, by the proof of Theorem \ref{intro3}, $G$ is the slim graph of a $2$-fat $\{\mathfrak{f}_1,\mathfrak{f}_2,\mathfrak{f}_3\}$-line Hoffman graph.
\end{proof}

\section{Applications}
\subsection{The graphs cospectral with $H(3,q)$}
Let $D\geq1$ and $q\ge 2$ be integers. Let $X$ be a finite set of size $q$. The Hamming graph $H(D,q)$ is the graph with the vertex set $X^D:=\prod_{i=1}^DX$ (the Cartesian product of $D$ copies of $X$) where two vertices are adjacent whenever they differ in precisely one coordinate. The graph $H(D,q)$ has distinct eigenvalues $\lambda_i=q(D-i)-D$ with multiplicities $m_i=\binom{D}{i}(q-1)^i$ for $i=0,1,\ldots,D$. In particular, the Hamming graph $H(3,q)$ has spectrum $\big\{(3q-3)^1,(2q-3)^{3(q-1)},(q-3)^{3(q-1)^2},(-3)^{(q-1)^3}\big\}$. The following result was first shown by S. Bang et al. \cite{Bang}.

\begin{theorem}There exists a positive integer $q^\prime$ such that for every integer $q \geq q^\prime$, any graph that is cospectral with the Hamming graph $H(3,q)$ is the intersection graph of some linear $3$-uniform hypergraph.
\end{theorem}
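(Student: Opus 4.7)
The plan is to reduce directly to Theorem~\ref{intro2} with $t=2$. Let $G$ be cospectral with $H(3,q)$. Its spectrum $\{(3q-3)^1,(2q-3)^{3(q-1)},(q-3)^{3(q-1)^2},(-3)^{(q-1)^3}\}$ tells me three things at once: $G$ has $n=q^3$ vertices; $G$ is cospectral with the regular graph $H(3,q)$ and is therefore itself $k$-regular with $k=3(q-1)$; and $G$ has exactly four distinct eigenvalues, so by the discussion preceding \eqref{averagevalency} it is walk-regular, whence $\bar d(G_x)$ is constant over $x\in V(G)$.

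Next I compute that constant by substituting $(\lambda_1,\lambda_2,\lambda_3)=(2q-3,q-3,-3)$, $k=3q-3$ and $n=q^3$ into \eqref{averagevalency}. The factors collapse cleanly: $(k-\lambda_1)(k-\lambda_2)(k-\lambda_3)=q\cdot 2q\cdot 3q=6q^3$, so the last summand equals $6q^3/(nk)=2/(q-1)$. A short polynomial division gives $\lambda_1\lambda_2\lambda_3/k = 7-2q-2/(q-1)$, and together with $\lambda_1+\lambda_2+\lambda_3=3q-9$ the $\pm 2/(q-1)$ contributions cancel, yielding
\[
\bar d(G_x)=q-2 \qquad \text{for every } x\in V(G).
\]

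Finally I verify the hypotheses of Theorem~\ref{intro2} with $t=2$. Condition~(iii) holds since $\lambda_{\min}(G)=-3\ge -t-1$. Condition~(i) asks $3q-3>\kappa(2)$, and condition~(ii) asks $q-2\le (3q-3-\kappa(2))/2$, i.e.\ $q\ge \kappa(2)-1$; both are satisfied whenever $q\ge q^\prime$ for an explicit constant $q^\prime$ depending only on $\kappa(2)$. Theorem~\ref{intro2} then delivers exactly the conclusion: $G$ is the intersection graph of a linear $3$-uniform hypergraph.

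I do not expect any real obstacle: cospectrality with $H(3,q)$ supplies regularity, walk-regularity, and the precise value of $\bar d(G_x)$ almost for free, and the remaining work is the trivial verification of two linear inequalities. The only mildly technical step is the telescoping cancellation in \eqref{averagevalency}, which is a direct calculation rather than a conceptual difficulty. The strength of the argument is entirely inherited from Theorem~\ref{intro2}, with the spectral hypothesis playing the sole role of producing the average local degree in the required range.
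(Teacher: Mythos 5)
Your proposal is correct and follows exactly the paper's route: the paper likewise reads off $\bar d(G_x)=q-2$ from equation (\ref{averagevalency}) and then invokes Theorem \ref{intro2} with $t=2$, taking $q'=\kappa(2)-1$. Your write-up is in fact more explicit than the paper's (spelling out regularity/walk-regularity from cospectrality, the telescoping computation, and the two linear inequalities), but it is the same argument.
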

\begin{proof}
Let $G$ be a graph cospectral with $H(3,q)$ and let $A$ be the adjacency matrix of $G$. %By (\ref{HoffmanPolynomial}), we know that
%$$A^3-3(q-3)A^2+(2q^2-18q+27)A+3(2q-3)(q-3)I=6J.$$
%For any vertex $x$ in $G$, $2|E(\G(x))|=A_{xx}^3=3(q-1)(q-2)$. Then $$\bar{a}(x)=\frac{3(q-1)(q-2)}{d(x)}=q-2.$$
By (\ref{averagevalency}), we know that for any vertex $x$ in $G$, $$\bar{d}(G_x)=q-2.$$
Let $q^\prime=\kappa(2)-1$, where $\kappa(2)$ is an integer such that Theorem \ref{intro2} holds. Then by Theorem \ref{intro2}, the result follows.
\end{proof}

\subsection{The graphs cospectral with $J(v,3)$}
Let $v,~p\ge 2$ be integers. Let $X$ be a finite set of size $v$. The Johnson graph $J(v,p)$ is the graph with vertex set $\binom{X}{p}$, the set of all $p$-subsets of $X$, where two $p$-subsets are adjacent if they intersect in precisely $p-1$ elements. The graph $J(v,p)$ has distinct eigenvalues $\lambda_i=(p-i)(v-p-i)-i$ with multiplicities $m_i=\binom{v}{i}-\binom{v}{i-1}$ for $i=0,1,\ldots,\min\{v-p,p\}$. (Since the Johnson graph $J(v,p)$ is isomorphic to the Johnson graph $J(v,v-p)$, we always assume that $v\ge 2p$.) In particular, the Johnson graph $J(v,3)$ has spectrum $\big\{(3v-9)^1,(2v-9)^{v-1},(v-7)^{\frac{v(v-3)}{2}},(-3)^{\frac{v(v-1)(v-5)}{6}}\big\}$. Now we show the following result.
\begin{theorem}\label{j(v,3)}
There exists a positive integer $v^\prime$ such that for every integer $v \geq v^\prime$, any graph that is cospectral with the Johnson graph $J(v,3)$ is the intersection graph of some linear $3$-uniform hypergraph.
\end{theorem}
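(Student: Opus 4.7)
The plan is to mimic the proof of Theorem \ref{Hamming} and apply Theorem \ref{intro2} with $t = 2$. Let $G$ be a graph cospectral with $J(v,3)$. Since $J(v,3)$ has exactly four distinct eigenvalues $\lambda_0 = 3v-9$, $\lambda_1 = 2v-9$, $\lambda_2 = v-7$, $\lambda_3 = -3$, and a single eigenvalue $3v-9$ with multiplicity $1$, the graph $G$ is connected and $(3v-9)$-regular on $n = \binom{v}{3}$ vertices with smallest eigenvalue $-3$. By the remarks preceding equation (\ref{averagevalency}), $G$ is walk-regular, so $\bar{d}(G_x)$ is the same for every vertex $x$ and is given by that formula.

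The key computation is to evaluate (\ref{averagevalency}) with the data above. One has $\lambda_1+\lambda_2+\lambda_3 = 3v-19$, and
\begin{equation*}
(k-\lambda_1)(k-\lambda_2)(k-\lambda_3) = v \cdot (2v-2) \cdot (3v-6) = 6v(v-1)(v-2) = 36n,
\end{equation*}
so the last term in (\ref{averagevalency}) equals $\tfrac{36}{k} = \tfrac{12}{v-3}$. Similarly $\tfrac{1}{k}\lambda_1\lambda_2\lambda_3 = \tfrac{-3(2v-9)(v-7)}{3(v-3)} = -2v+17-\tfrac{12}{v-3}$. Adding these pieces, the $\tfrac{12}{v-3}$ terms cancel exactly and we obtain
\begin{equation*}
\bar{d}(G_x) = (3v-19) + (-2v+17) = v-2
\end{equation*}
for every vertex $x$ of $G$.

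Now let $\kappa(2)$ be the constant provided by Theorem \ref{intro2} for $t=2$, and set $v' := \kappa(2) + 9$. For all $v \geq v'$, we have $d(x) = 3v - 9 > \kappa(2)$, and the inequality $\bar{d}(G_x) = v-2 \leq \tfrac{d(x)-\kappa(2)}{2} = \tfrac{3v-9-\kappa(2)}{2}$ is equivalent to $v \geq \kappa(2)+5$, which holds. Since also $\lambda_{\min}(G) = -3 \geq -t-1$, all three hypotheses of Theorem \ref{intro2} are met, and the conclusion is that $G$ is the intersection graph of some linear $3$-uniform hypergraph.

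There is no real obstacle here: the entire argument reduces to a single straightforward application of (\ref{averagevalency}) followed by Theorem \ref{intro2}. The only mildly delicate point is the cancellation in the computation of $\bar{d}(G_x)$, which must yield a quantity linear in $v$ (rather than quadratic in $v$) in order for condition (ii) of Theorem \ref{intro2} to hold for large $v$; this is precisely what the formula delivers.
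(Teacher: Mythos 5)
Your proposal is correct and follows essentially the same route as the paper: apply formula (\ref{averagevalency}) to the spectrum of $J(v,3)$ to get $\bar{d}(G_x)=v-2$ for every vertex, then invoke Theorem \ref{intro2} with $t=2$ (the paper takes $v'=\kappa(2)+5$, which your computation also shows suffices; your choice $v'=\kappa(2)+9$ is just a harmless larger threshold). Your write-up merely spells out the cancellation and the connectedness/walk-regularity remarks that the paper leaves implicit.
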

\begin{proof}
Let $G$ be a graph cospectral with $J(v,3)$ and let $A$ be the adjacency matrix of $G$. %By (\ref{HoffmanPolynomial}), we know that
%$$A^3-(3v-19)A^2+(2v^2-32v+111)A+(6v^2-69v+189)I=36J.$$
By (\ref{averagevalency}), we know that for any vertex $x$ in $G$, $$\bar{d}(G_x)=v-2.$$

Let $v^\prime=\kappa(2)+5$, where $\kappa(2)$ is an integer such that Theorem \ref{intro2} holds. Then by Theorem \ref{intro2}, the result follows.
\end{proof}

\subsection{The graphs cospectral with the $2$-clique extension of $(t_1\times t_2)$-grid}
The graph $(t_1\times t_2)$-grid is the line graph of the complete bipartite graph, $K_{t_1,t_2}$. In other words, it is the graph $K_{t_1}\Box K_{t_2}$, where $\Box$ represents the \emph{Cartesian product}. The spectrum of the $(t_1\times t_2)$-grid is $\big\{(t_1+t_2-2)^1,(t_1-2)^{t_2-1},(t_2-2)^{t_1-1},(-2)^{(t_1-1)(t_2-1)}\big\}$ (see, for example, Section $9.7$ of \cite{Ch&G}).

A graph $\widetilde{G}$ is called the $2$-clique extension of graph $G$ if $\widetilde{G}$ has matrix $\left(
                                                                                                          \begin{array}{cc}
                                                                                                            A-I & A \\
                                                                                                            A & A-I \\
                                                                                                          \end{array}
                                                                                                        \right)$ as its adjacency matrix, where $A$ is the adjacency matrix of $G$. So, if
                                                                                                        $G$ has spectrum
 \begin{equation*}
 \big\{\lambda_0^{m_0},\lambda_1^{m_1},\ldots,\lambda_t^{m_t}\big\},
 \end{equation*}
then the spectrum of $\widetilde{G}$ is
 \begin{equation*}
 \big\{(2\lambda_0+1)^{m_0},(2\lambda_1+1)^{m_1},\ldots,(2\lambda_t+1)^{m_t},(-1)^{(m_0+m_1+\cdots+m_t)}\big\}.
 \end{equation*}
Now we show the following result.
\begin{theorem}There exists a positive integer $t$ such that for all integers $t_1 \geq t_2 \geq t$, any graph that is cospectral with the $2$-clique extension of $(t_1 \times t_2)$-grid is the slim graph of a $2$-fat $\{\mathfrak{f}_1,\mathfrak{f}_2,\mathfrak{f}_3\}$-line Hoffman graph.
\end{theorem}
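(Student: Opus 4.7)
The plan is to apply Theorem \ref{intro3}. From the spectrum of the $(t_1\times t_2)$-grid given in the paper, the $2$-clique extension $\widetilde{G}$ has spectrum
$$\{(2t_1+2t_2-3)^1,(2t_1-3)^{t_2-1},(2t_2-3)^{t_1-1},(-3)^{(t_1-1)(t_2-1)},(-1)^{t_1t_2}\},$$
so it is $k$-regular on $n=2t_1t_2$ vertices with $k=2t_1+2t_2-3$, has smallest eigenvalue $-3$, and, for $t_1\ge t_2\ge 2$, has second largest eigenvalue $\theta_2=2t_1-3$. If $G$ is cospectral with $\widetilde{G}$, then $\lambda_{\max}(G)=k$ is simple so $G$ is connected, and the identity $\bar{d}(G)=\tr(A^2)/n=k$ together with the standard inequality $\lambda_{\max}\ge\bar d$ (with equality iff the graph is regular) forces $G$ to be $k$-regular.

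Next I would apply Theorem \ref{plexorder} to $G$ with $p+1=5$ to obtain that the order of any $5$-plex of $G$ is at most
$$\frac{n(5+\theta_2)}{n-k+\theta_2}=\frac{2t_1t_2(2t_1+2)}{2t_2(t_1-1)}=\frac{2t_1(t_1+1)}{t_1-1}\le 2t_1+8$$
for all $t_1\ge 2$. Let $K$ be the constant from Theorem \ref{intro3} and set $t:=\max\{2,\lceil(K+11)/2\rceil\}$. Then for any $t_1\ge t_2\ge t$: condition (i) holds since $d(x)=2t_1+2t_2-3>K$; condition (ii) holds since any $5$-plex has order at most $2t_1+8\le 2t_1+2t_2-3-K=d(x)-K$; and condition (iii) $\lambda_{\min}(G)\ge -3$ is immediate from cospectrality. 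Theorem \ref{intro3} then gives the desired conclusion.

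The main technical point is the last inequality: the ratio bound of Theorem \ref{plexorder} produces a $5$-plex size of order $2t_1$, while $d(x)\approx 2(t_1+t_2)$, so the additive slack $2t_2$ must absorb the unknown constant $K$ coming from Theorem \ref{intro3}; this is precisely why the threshold $t$ has to be linear in $K$. Attempting the alternative route via Theorem \ref{intro4} would be awkward, since for $t_1>t_2$ the graph $\widetilde{G}$ has five distinct eigenvalues and cospectrality no longer forces the local average degree $\bar d(G_x)$ to be constant across $x$; working with the global plex bound via Theorem \ref{intro3} sidesteps this obstacle cleanly.
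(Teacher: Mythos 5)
Your proposal is correct and follows essentially the same route as the paper: compute the spectrum of the $2$-clique extension, apply the ratio-type bound of Theorem \ref{plexorder} with $\theta_2=2t_1-3$ to get the $5$-plex bound $\frac{2t_1(t_1+1)}{t_1-1}$, and then feed conditions (i)--(iii) into Theorem \ref{intro3} after choosing $t$ linear in $K$ (the paper uses $t=\lceil (K+9)/2\rceil$, you use $\lceil (K+11)/2\rceil$, which makes no difference). Your explicit verification that a cospectral mate is regular is a worthwhile addition the paper leaves implicit; only the aside that a simple largest eigenvalue forces connectivity is not valid in general, but it is also not needed, since regularity (which you correctly deduce from $\lambda_{\max}=\bar d$) is all that Theorem \ref{plexorder} requires.
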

\begin{proof}Let $G$ be a graph cospectral with the $2$-clique extension of $(t_1\times t_2)$-grid with $t_1 \geq t_2$, then the spectrum of $G$ is
 $$\big\{(2t_1+2t_2-3)^1,(2t_1-3)^{t_2-1},(2t_2-3)^{t_1-1},(-1)^{t_1t_2},(-3)^{(t_1-1)(t_2-1)}\big\}.$$

For any vertex $x$ in $G$, let $P$ be a $5$-plex containing $x$. Then by Lemma \ref{plexorder}, we know that $$|V(P)|\leq\frac{2t_1t_2(5+2t_1-3)}{2t_1t_2-(2t_1+2t_2-3)+(2t_1-3)}=\frac{2t_1(t_1+1)}{t_1-1}.$$

Since $d(x)-\frac{2t_1(t_1+1)}{t_1-1}=\frac{2t_1t_2-7t_1-2t_2+3}{t_1-1}\geq \frac{2t_1t_2-9t_1+3}{t_1}\geq 2t_2-9.$

So let $t=\lceil\frac{K+9}{2}\rceil$, where $K$ is an integer such that Theorem \ref{intro3} holds. Then, by using Theorem \ref{intro3}, the result follows.

\end{proof}

\noindent

{\bf Acknowledgments} J.H. Koolen is partially supported by the National Natural Science Foundation of China (No. 11471009 and No. 11671376). Part of this work was done while J.Y. Yang was a graduate student in POSTECH.

%\subsetion{Then }


\begin{thebibliography}{99}

\bibitem
{Bang}
S. Bang, E.R. van Dam and J.H. Koolen, {Spectral characterization of the Hamming graphs}, {\it Linear Algebra Appl.}, 429:2678--2686, 2008.

\bibitem
{Cameron}
P.J. Cameron, J.-M. Goethals, J.J. Seidel and E.E. Shult, {Line graphs, root systems, and elliptic geometry}, {\it Journal of Algebra}, 43:305--327, 1976.


\bibitem
{Ch&G}
C.D. Godsil and G. Royle, {Algebraic Graph Theory}, Springer-Verlag, New York, 2001.

\bibitem
{Hoff1973}
A.J. Hoffman, {On spectrally bounded graphs}. {\it A survey of Combinatorial Theory}. North Holland, 277--283, 1973.

\bibitem
{Hoff1977}
A.J. Hoffman, {On graphs whose least eigenvalue exceeds $-1-\sqrt2$}. {\it Linear Algebra Appl.}, 16:153--165, 1977.

\bibitem
{HJAT}
H.J. Jang, J.H. Koolen, A. Munemasa and T. Taniguchi. {On fat Hoffman graphs with smallest eigenvalue at least $-3$}, {\it Ars Math. Contemp.}, 7:105--121, 2014.

\bibitem
{HJJ}
H.K. Kim, J.H. Koolen and J.Y. Yang, {A structure theory for graphs with fixed smallest eigenvalue}, {\it Linear Algebra Appl.}, 540:1--13, 2016.

\bibitem
{kyy3}
J.H. Koolen, J.Y. Yang, Q. Yang, On graphs with smallest eigenvalue at least $-3$ and their lattices, arXiv:1804.00369.

\bibitem
{Ramsey}
F.P. Ramsey, {On a problem of formal logic}, {\it Proceedings London Mathematical Society}. 48:264--286, 1930.

\bibitem
{Plex}
S.B. Seidman and B.L. Foster, {A graph-theoretic generalization of the clique concept}, {\it J. Math. Sociol.}, 6(1):139--154, 1978.

\bibitem
{VanDam}
E.R. van Dam, Regular graphs with four eigenvalues, \emph{Linear Algebra Appl.}, 226--228:139--162, 1995.

\bibitem
{VanDam&Haemers}
E.R. van Dam, W.H. Haemers, J.H. Koolen and E. Spence, {Characterizing distance-regularity of graphs by the spectrum}, {\it Journal of Combinatorial Theory, Series A}, 113:1805--1820, 2006.

\bibitem
{Woo}
R. Woo and A. Neumaier, {On graphs whose smallest eigenvalue is at least $-1-\sqrt2$},
{\it Linear Algebra Appl.}, 226--228:577--591, 1995.

\bibitem
{YAK}
Q. Yang, A. Abiad, J.H. Koolen, {An application of Hoffman graphs for spectral characterizations of graphs}, \emph{Electron. J. Combin.}, 24(1), \#P1.12, 2017.
\end{thebibliography}
\end{document}